\newtheorem{lemma}{Lemma}[subsection]
\newtheorem{theorem}[lemma]{Theorem}
\newtheorem{proposition}[lemma]{Proposition}
\newtheorem{corollary}[lemma]{Corollary}
\newtheorem{conjecture}[lemma]{Conjecture}
\newtheorem{introthm}{Theorem}
\newtheorem{introcor}[introthm]{Corollary}
\theoremstyle{definition}
\newtheorem{example}[lemma]{Example}
\newtheorem{remark}[lemma]{Remark}
\newtheorem*{notation}{Notation}
\newtheorem{definition}[lemma]{Definition}
\newtheorem{question}[lemma]{Question}
\newcommand{\G}[1][n]{\sym[#1][G]}		
\newcommand{\A}[1][n]{\mathscr{A}_{#1}}		
\newcommand{\M}[1][n]{\mathscr{M}_{#1}}		
\renewcommand{\P}[1][n]{\mathscr{P}_{#1}}	
\newcommand{\D}[1][n]{\mathscr{D}_{#1}}		
\newcommand{\DD}[1][n]{\mathscr{D}_{#1}^\vee}
\newcommand{\Q}{\mathscr{Q}}			
\renewcommand{\O}{\mathscr{O}}			
\newcommand{\Z}{\mathbb{Z}}			
\newcommand{\QQ}{\mathbb{Q}}			
\newcommand{\C}{\mathbb{C}}			
\renewcommand{\AA}{\mathbb{A}}			
\newcommand{\R}{\mathbb{R}}			
\newcommand{\sym}[1][n]{\mathfrak{S}_{#1}}	
\newcommand{\n}[1][n]{\mathbf{#1}}		
\newcommand{\oo}{\mathfrak{o}}			
\newcommand{\zero}{\hat{0}}			
\newcommand{\onehat}{\hat{1}}
\renewcommand{\H}{\widetilde{\Ho}}			
\DeclareMathOperator{\WH}{WH}			
\DeclareMathOperator{\Ho}{H}			
\newcommand{\wt}[1]{\widetilde{#1}}		
\newcommand{\parts}[1][\beta]{(\wt{#1},z)}
\newcommand{\into}{\hookrightarrow}		
\DeclareMathOperator{\Conf}{Conf}		
\DeclareMathOperator{\Aut}{Aut}			
\DeclareMathOperator{\rk}{rk}			
\DeclareMathOperator{\Ind}{Ind}			
\DeclareMathOperator{\im}{im}			
\title{Combinatorics of orbit configuration spaces}
\author{Christin Bibby}
\address{Department of Mathematics, Louisiana State University, Baton Rouge, LA, USA}
\email{\href{mailto:bibby@lsu.edu}{bibby@lsu.edu}}
\author{Nir Gadish}
\address{Department of Mathematics, Massachusetts Institute of Technology, Cambridge, MA, USA} 
\email{\href{mailto:ngadish@mit.edu}{ngadish@mit.edu}}
\keywords{Dowling lattice, hyperplane arrangement, orbit configuration space}
\subjclass[2010]{Primary
05E18; 
Secondary
06A11, 
52C35} 
\begin{document}

\begin{abstract}
	From a group action on a space, define a variant of the configuration
space by insisting that no two points inhabit the same orbit. When the action is
almost free, this ``orbit configuration space'' is the complement of an
arrangement of subvarieties inside the cartesian product, and we use this
structure to study its topology.

	We give an abstract combinatorial description of its poset of layers
(connected components of intersections from the arrangement) which turns out to
be of much independent interest as a generalization of partition and Dowling
lattices. The close relationship to these classical posets is then exploited to
give explicit cohomological calculations. 
	
\end{abstract}

\maketitle

\section{Introduction}\label{sec:intro}

\subsection{Orbit configuration spaces}
A fundamental topological object attached to a topological space $X$ is its
ordered configuration space $\Conf_n(X)$ of $n$ distinct points in $X$. 
Analogously, given a group $G$ acting freely on $X$ one defines the 
\emph{orbit configuration space} by
\[\Conf_n^G(X) = \{(x_1,\dots,x_n)\in X^n\ |\ Gx_i\cap
Gx_j=\emptyset \text{ for } i\neq j\}.\]
These spaces were first defined in \cite{Xthesis} and
come up in many natural topological contexts, including:
\begin{itemize}
	\item Universal covers of $\Conf_n(X)$ when $X$ is a manifold with 
$\dim(X)>2$ \cite{Xthesis}.
	\item Classifying spaces of well studied groups, such as normal subgroups of surface braid groups with quotient $G^n$ \cite{Xthesis}.
	\item Arrangements associated with root systems
\cite{bibby2,looijenga,Moci2008}.
	\item Equivariant loop spaces of $X$ and $\Conf_n(X)$ \cite{X}.
\end{itemize}
A fundamental problem is thus to compute the cohomology
of $\Conf_n^G(X)$.
This has been previously studied e.g. by \cite{Casto2016,DS, FZ}.

The current literature typically requires the action to be free, with main
results relying on this assumption. 
For an action that is not free, one could simply throw out the set of singular points for the action
and consider $\Conf_n^G(X\setminus S)$, where 
\[ S := \operatorname{Sing}_G(X) = \bigcup_{g\in G\setminus\{e\}} X^g,\]
the set of points fixed by a nontrivial group element. However, the excision can
create more harm than good: e.g. when $X$ is a smooth projective variety,
removing $S$ destroys the projective structure and causes mixing of Hodge
weights in cohomology. In particular, having a projective structure makes a
spectral sequence calculation more manageable (see Theorem \ref{thm:spectral_sequence} and \S\ref{sec:specseq}). 
Furthermore, one is often interested in allowing 
orbit configurations to inhabit $S$, e.g. in arrangements arising from type B and D root systems (see \S\ref{sec:subarrangements}).

We propose an alternative approach: observe that inside $X^n$, the orbit
configuration space $\Conf_n^G(X\setminus S)$ is the complement of an 
arrangement $\A(G,X)$ of subspaces. 
In the case that $G$ is the trivial group, this arrangement is the set of diagonals $x_i=x_j$ in $X^n$.
The cohomology $\Ho^*(\Conf^G_n(X\setminus S))$ can then be computed from the combinatorics
of this arrangement and from $\Ho^*(X)$. Furthermore, the wreath product of $G$
with the symmetric group $\sym$, which we denote by $\G$, 
acts naturally on the space $X^n$, and this induces an $\G$--action on both
$\A(G,X)$ and its complement $\Conf_n^G(X\setminus S)$. The induced action on 
$\Ho^*(\Conf^G_n(X\setminus S))$ can also be traced
through the combinatorial computation. 

In the sequel \cite{BG2}, the authors study this action on cohomology through the lens of representation stability. The present paper focuses on the combinatorial foundations of orbit configuration spaces and machinery for computing their cohomology and other topological invariants.

\subsection{Running assumptions and notation}
For our study, we from hereon assume that $G$ and $S$ are finite sets, so that
the arrangement $\A(G,X)$ is finite.
Moreover, by a ``space" $X$ we mean either a CW complex or an algebraic variety over an algebraically 
closed field.

When discussing cohomology below, we will always suppress the coefficients and use the following convention. For $X$ a CW complex, let the cohomology $\Ho^*(X)$ denote singular cohomology with coefficients in any ring $R$; and for $X$ an algebraic variety, $\Ho^*(X)$ denotes $\ell$-adic cohomology with coefficients in either $\mathbb{Z}_\ell$ or $\QQ_\ell$.

\subsection{Combinatorics}

The combinatorics at play is the \emph{poset of layers}: connected components of
intersections from $\A(G,X)$, ordered by reverse inclusion. This poset admits an
abstract combinatorial description, that does not in fact depend on $X$ (only
depending on the $G$--set $S$) and it is of much independent interest. For example,
\begin{itemize}
	\item In the case of classical configuration spaces ($G$ trivial), the
poset is the lattice 
$\Q_{\n}$
of set partitions of $\n = \{1,2,\dots,n\}$.
	\item In the case that $G$ is a cyclic group acting on $X=\C$ via
multiplication by roots of unity, the poset is an instance of the Dowling lattice 
$\D(G)$, 
described in \cite{Dowling1973} as an analogue of the partition lattice which
consists of partial $G$--partitions of $\n$.
\end{itemize}
In \S\ref{sec:posetdefs}, we define the poset $\D(G,S)$ which specializes
to these classical examples and discuss the natural action of the wreath product
group $\G$.

Even though $\D(G,S)$ is not in general a lattice, it supports a myriad of
properties that have been fundamental in the modern study of posets, since it is
essentially built out of partition and Dowling lattices as indicated in the
following theorem (Theorem \ref{thm:intervals}):
\begin{introthm}[\textbf{Local structure of $\D(G,S)$}]\label{thm:intervals1}
	For any $\alpha,\beta\in \D(G,S)$ with $\alpha<\beta$, 
the interval $[\alpha,\beta]$ is isomorphic to a product
	\[ \Q_{\n[n_1]}\times \ldots \times \Q_{\n[n_d]} \times
\D[m_1](G_1)\times \ldots \times \D[m_k](G_k)  \]
where $\Q_{\n[n_i]}$ denotes a partition lattice and $\D[m_j](G_j)$ denotes a 
Dowling lattice for some subgroup $G_j\leq G$.
	In particular, every interval is a geometric lattice and has the
homology of a wedge of spheres.
\end{introthm}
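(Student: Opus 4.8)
The plan is to work entirely inside the explicit combinatorial model of $\D(G,S)$ developed in \S\ref{sec:posetdefs}, where an element is a partition‑type datum on $\n$ in which certain blocks are ``free'' and carry a $G$‑decoration encoding relations of the form $x_i=gx_j$, while other blocks are ``pinned'' to a point of the $G$‑set $S$; the covers are merging two free blocks along a group element, absorbing a free block into a pinned block, and merging within the pinned part. Given $\alpha<\beta$, I would produce the claimed isomorphism in three stages: (1) decompose $[\alpha,\beta]$ as a product indexed by the blocks of $\beta$; (2) identify each factor as a partition lattice or a Dowling lattice; (3) deduce the geometric‑lattice and wedge‑of‑spheres statements from the structure theory of such lattices.

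First, for (1), the key observation is that the cover relations lying below $\beta$ are \emph{supported on individual blocks of $\beta$}: modifying the datum inside one block of $\beta$ never affects the datum on another. Restriction therefore defines a map $\gamma\mapsto(\gamma|_B)_{B\in\beta}$, and I would check that it is an isomorphism of posets
\[
  [\alpha,\beta]\ \xrightarrow{\ \sim\ }\ \prod_{B\in\beta}\,[\,\alpha|_B,\;B\,],
\]
the interval on the right being formed in the sub‑poset carried by the part of $\n$ inside $B$, with $\alpha|_B$ as its bottom element. The content is that the order, the rank function, and the $G$‑decorations of $\D(G,S)$ all split as a disjoint union over the blocks of $\beta$.

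For (2) there are two cases. If $B$ is a free block of $\beta$, then (since $\alpha\le\beta$ cannot ``un‑pin'' anything) every $\alpha$‑block inside $B$ is free, and the condition $\gamma\le\beta$ rigidifies all the $G$‑decorations relative to the global decoration of $B$; only the set partition of the $n_i$ many $\alpha$‑blocks inside $B$ is free to vary, so $[\alpha|_B,B]\cong\Q_{\n[n_i]}$. If $B$ is pinned to a point $s\in S$, set $G_j=\operatorname{Stab}_G(s)\le G$; then a relation $x_i=gx_j$ between coordinates that both become pinned to $s$ holds on the layer of $\beta$ only when $g\in G_j$, so the $m_j$ free $\alpha$‑blocks inside $B$ behave exactly as the $m_j$ nonzero atoms of a Dowling lattice over $G_j$ — they merge with one another carrying $G_j$‑decorations or get absorbed into the pinned part — while the pinned $\alpha$‑block of $B$ (if any) together with the pinned part itself forms the unique minimum of that factor. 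Hence $[\alpha|_B,B]\cong\D[m_j](G_j)$. Discarding trivial factors and multiplying over all blocks of $\beta$ gives the stated product.

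Finally, for (3): partition lattices and Dowling lattices are geometric lattices (the latter by Dowling's theorem \cite{Dowling1973}), and a finite direct product of geometric lattices is again geometric, so every interval $[\alpha,\beta]$ is a geometric lattice; that a rank‑$r$ geometric lattice has the homology of a wedge of $(r-2)$‑spheres, in number the absolute value of its Möbius invariant, is a classical theorem of Folkman. I expect stage (1) to be the real obstacle: one must make the model precise enough to define $\alpha|_B$ unambiguously and then verify that everything — especially the $G$‑decorations and the interaction between free and pinned blocks — genuinely decomposes as a product over the blocks of $\beta$. Stages (2) and (3) are then the standard interval analyses for partition and Dowling lattices, carried out one factor at a time.
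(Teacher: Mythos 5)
Your stage (2) treatment of the zero block contains a genuine error, and it propagates back to invalidate stage (1). You group the pinned coordinates by the \emph{point} $s\in S$ to which they are pinned and claim that only relations $x_j=g.x_i$ with $g\in\operatorname{Stab}_G(s)$ survive below $\beta$. That is true for two coordinates pinned to the \emph{same} point, but it ignores coordinates pinned to \emph{distinct points in the same $G$--orbit}: if $z_\beta(i)=s$ and $z_\beta(j)=t$ with $t=h.s$, then the layer of $\beta$ is contained in $H_{ij}(g)$ for every $g$ in the coset $h\operatorname{Stab}_G(s)$, so $i$ and $j$ can be merged in elements of $[\alpha,\beta]$ even though they lie in different ``pinned blocks'' of your decomposition. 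Concretely, in the square Dowling poset (Example \ref{ex:intervalZ4}) with $G=\Z_4$ and $z_1,z_2$ in one orbit with stabilizer $\Z_2$, the interval $[\zero,[\emptyset||1_{z_1}2_{z_2}]]$ contains the merged elements $[1_12_{i}||\emptyset]$ and $[1_12_{-i}||\emptyset]$ (note $\pm i\notin\operatorname{Stab}(z_1)$) and is isomorphic to $\D[2](\Z_2)$, with six elements; your recipe would give $\D[1](\Z_2)\times\D[1](\Z_2)$, with four elements and no merges. Consequently your restriction map $\gamma\mapsto(\gamma|_B)_B$ is not even well defined on such $\gamma$, because a block of $\gamma$ can straddle two of your pinned blocks: the claim that ``modifying the datum inside one block of $\beta$ never affects the datum on another'' fails exactly here.

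The fix is the content of the paper's proof of Theorem \ref{thm:intervals}\eqref{eq:lowerint}: the zero block must be partitioned by the fibers $z_\beta^{-1}(\oo)$ over the \emph{orbits} $\oo\in\O(S)$, one Dowling factor $\D[z_\beta^{-1}(\oo)](G_\oo)$ per orbit, where $G_\oo$ is the stabilizer of a chosen representative $s_\oo$. Identifying that factor with a genuine Dowling lattice over $G_\oo$ is then not automatic; one must choose transporters $g_t$ with $g_t.s_\oo=t$ for each $t\in\oo$ and use them to convert the $G$--colorings $a$ into $G_\oo$--colorings $a'(i)=g_{z_\beta(i)}^{-1}a(i)$ (this is why the isomorphism is non-canonical). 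Your free-block analysis, the reduction of a general interval to the case $\alpha=\zero$ (the paper instead composes the lower-interval description with the isomorphism $\D(G,S)_{\geq\alpha}\cong\D[\alpha](G,S)$, which amounts to the same re-indexing by $\alpha$-blocks), and stage (3) via Dowling and Folkman are all fine once the orbit-level grouping replaces the point-level one.
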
    

In the remainder of Section \S\ref{sec:combinatorics} we study the structure of these posets: In \S\ref{sec:functoriality} we discuss their functoriality in the various inputs; in \S\ref{sec:posetprops} we discuss local structure and prove Theorem \ref{thm:intervals1}. Of particular interest is \S\ref{sec:charpoly}, where we discuss the characteristic polynomial: a fundamental invariant of a ranked poset, which is a common generalization of the chromatic polynomial of a graph, and the Poincar\'{e} polynomial of the complement of a hyperplane arrangement. 
We give  
the following
factorization of the characteristic polynomial into linear factors, generalizing a long list of special cases stretching back to Arnol'd and Stanley's work on the pure braid group and the partition lattice.
\begin{introthm}[\textbf{Characteristic polynomial}]
Let $S\neq \emptyset$ be a $G$-set. Then
\[
\chi(\D(G,S);t) := 
\sum_{\beta\in\D(G,S)}\mu(\zero,\beta)t^{n-\rk(\beta)} 
= \prod_{i=0}^{n-1} (t-|S|-|G|i),
\]
where $\mu$ is the M\"obius function of the poset and $\zero$ is the minimum element.
An analogous factorization for the case $S=\emptyset$ appears in Theorem \ref{thm:charpoly} below.
\end{introthm}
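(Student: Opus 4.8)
The plan is to compute $\chi(\D(G,S);t)$ by the finite field method (equivalently, by M\"obius inversion over the poset of layers), realizing $\D(G,S)$ as the intersection poset of a concrete \emph{finite} arrangement and counting the complement in two ways. I would fix a positive integer $N$ with $N\equiv |S|\pmod{|G|}$ and $N\ge |S|$ --- there are infinitely many such $N$ --- and form a finite $G$--set $\Omega = S_0\sqcup F$ in which $S_0\cong S$ and $F$ is a free $G$--set of size $N-|S|$ (a disjoint union of $(N-|S|)/|G|$ copies of the regular representation). Inside $\Omega^n$ I consider the arrangement whose members are the translated diagonals $\{x_i = g\cdot x_j\}$ for $i\neq j$, $g\in G$, together with the coordinate subsets $\{x_i = s\}$ for $s\in S_0$. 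By the combinatorial description of $\D(G,S)$ from \S\ref{sec:posetdefs}, the intersection poset of this arrangement is canonically $\D(G,S)$; moreover a layer $\beta$ of rank $r$ is cut out by $r$ independent conditions, grouping the coordinates into $n-r$ ``free'' blocks together with some coordinates pinned to elements of $S_0$, so that the corresponding subset $Z_\beta\subseteq\Omega^n$ has cardinality exactly $\#Z_\beta = N^{\,n-\rk(\beta)}$.

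With this in hand, M\"obius inversion over $\D(G,S)$ gives
\[
  \#\bigl(\Omega^n\setminus\textstyle\bigcup_a Z_a\bigr)
  \;=\; \sum_{\beta\in\D(G,S)}\mu(\zero,\beta)\,\#Z_\beta
  \;=\; \sum_{\beta\in\D(G,S)}\mu(\zero,\beta)\,N^{\,n-\rk(\beta)}
  \;=\; \chi(\D(G,S);N).
\]
On the other hand, a tuple avoids all the subsets $\{x_i=s\}$ precisely when every coordinate lies in $F$, and avoids all the translated diagonals $\{x_i=g\cdot x_j\}$ precisely when $Gx_i\cap Gx_j=\emptyset$; hence the complement on the left is exactly $\Conf_n^G(F)$. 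Since $G$ acts freely on $F$, every orbit has size $|G|$ and choosing the coordinates one at a time yields
\[
  \#\Conf_n^G(F) \;=\; |F|\bigl(|F|-|G|\bigr)\cdots\bigl(|F|-|G|(n-1)\bigr) \;=\; \prod_{i=0}^{n-1}\bigl(N-|S|-|G|i\bigr).
\]
Therefore the two polynomials $\chi(\D(G,S);t)$ and $\prod_{i=0}^{n-1}(t-|S|-|G|i)$ agree at the infinitely many integers $N\equiv|S|\pmod{|G|}$, and so they agree identically, which is the claim.

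Since the computation itself is short, the real work --- and the step I expect to require the most care --- is the first paragraph: confirming from the definition in \S\ref{sec:posetdefs} that the intersection poset of this particular finite arrangement in $\Omega^n$ is canonically $\D(G,S)$ with the stated rank function, and that $\#Z_\beta = N^{\,n-\rk(\beta)}$ on the nose. The subtlety is that $S_0$ must be introduced as a set of marked points by hand rather than as the honest singular locus $\operatorname{Sing}_G(\Omega)$ (which would drop the free orbits of $S$), so one should check that this does not affect the layer poset. An alternative, should one prefer to argue inside the poset, is to read off from the same count the recursion $\chi(\D(G,S);t) = \bigl(t-|S|-|G|(n-1)\bigr)\,\chi(\D[n-1](G,S);t)$ and induct on $n$, using Theorem~\ref{thm:intervals1} applied at the bottom element to reduce each $\mu(\zero,\beta)$ to a product of known M\"obius values of partition and Dowling lattices; the degenerate case $S=\emptyset$ is handled separately in Theorem~\ref{thm:charpoly} (with $\Omega=F$).
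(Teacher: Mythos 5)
Your proof is correct, but it takes a genuinely different route from the paper's. The paper partitions the atoms of $\D(G,S)$ into sets $A_1,\dots,A_n$ with $|A_k|=|S|+|G|(k-1)$ and invokes the Hallam--Sagan quotient/transversal method (\cite[Thm.~4.4]{Hallam2014}) to read off the linear factors directly. You instead build a finite $G$-set model $\Omega=S_0\sqcup F$ and count the complement two ways; this is precisely the finite-set analogue of the paper's own proof of Theorem \ref{thm:motive}, run in the opposite direction (the paper deduces the motive/point-count from the characteristic polynomial, you deduce the characteristic polynomial from a point count). One point needs restating: the \emph{intersection poset} of your arrangement in $\Omega^n$ is not literally $\D(G,S)$ --- e.g.\ $\{x_1=x_2\}\cap\{x_1=g\cdot x_2\}$ is a union of several sets $Z_\beta$, not a single one, exactly as in the disconnected-intersection phenomenon of Example \ref{ex:typeCP}. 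What your displayed M\"obius inversion actually requires is the local statement that for each point $p\in\Omega^n$ the set $\{\beta\in\D(G,S):p\in Z_\beta\}$ is a principal order ideal $[\zero,\beta_p]$, together with $\#Z_\beta=N^{n-\rk\beta}$; both hold here because $F$ is a free $G$-set and $S_0\sqcup F$ is a $G$-invariant decomposition (freeness makes the group element relating $x_i$ to $x_j$ in $F$ unique, so $\beta_p$ is well defined), and this is the same mechanism the paper uses via Equation \eqref{eq:mobius} in the proof of Theorem \ref{thm:motive}. With that adjustment your argument is complete and arguably more self-contained, giving a bijective interpretation of the roots $|S|+|G|i$, at the cost of losing the contact with supersolvability that the atom-partition approach provides.
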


Lastly, in \S\ref{sec:orbits}, we consider the action of $\G$ on the poset $\D(G,S)$, and describe its orbits. In \S\ref{sec:whitney}, we study their Whitney homology as a representation of $\G$: this invariant has proved important both for topology and for the abstract theory of posets, and will be later used when discussing orbit configuration spaces in \S\ref{sec:arrangements}.
In \cite{paolini}, Paolini further studies the homology of $\D(G,S)$, finding more properties shared with partition and Dowling lattices and resolving our Conjecture \ref{conj:shellable}.

\subsection{Topology}
As mentioned above, the poset $\D(G,S)$ 
arises naturally in the study of orbit configuration spaces, when we take $S$ to
be the set of singular points for the action of $G$ on $X$. Section \S \ref{sec:arrangements} is devoted to studying the topology of these spaces, and relating it to the combinatorics of Section \S\ref{sec:combinatorics}.

In \S\ref{sec:arrangementdefs}, we define an arrangement $\A(G,X)$ in $X^n$, whose complement is the orbit configuration space $\Conf_n^G(X\setminus S)$. Recall that the \emph{poset of layers} of an arrangement $\A(G,X)$ is the collection 
of connected components of intersections from $\A(G,X)$, ordered by reverse inclusion. 
This poset encodes subtle aspects of the topology of $\Conf_n^G(X\setminus S)$, 
as we shall see here (Theorem \ref{thm:layers}):
\begin{introthm}[\textbf{Poset of layers}]\label{thm:posetisom}
The poset of layers of the arrangement $\A(G,X)$ is naturally and
$\G$--equivariantly isomorphic to the poset $\D(G,S)$.	
\end{introthm}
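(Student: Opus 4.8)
The plan is to exhibit an explicit order-reversing bijection $\beta \mapsto Y_\beta$ from $\D(G,S)$ onto the set of layers of $\A(G,X)$, by writing down the subvariety of $X^n$ that each combinatorial datum cuts out, and then to check (i) that each $Y_\beta$ is connected and hence is a genuine single layer, (ii) that every layer arises this way and that the assignment is injective, and (iii) that the bijection intertwines the two $\G$--actions. To set up, recall from \S\ref{sec:arrangementdefs} the generating subvarieties of $\A(G,X)$: the diagonals $\Delta_{ij}^g = \{x_\bullet : x_i = g\cdot x_j\}$ for $i\neq j$ and $g\in G$, and the singular walls $W_i^s = \{x_\bullet : x_i = s\}$ for $i\in\n$ and $s\in S$, whose union is exactly the complement of $\Conf_n^G(X\setminus S)$ in $X^n$. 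Since $S$ is finite, each fixed set $X^g$ with $g\neq e$ is a finite set, so the walls $W_i^s$ are rigid in the $i$-th coordinate, whereas each $\Delta_{ij}^g\cong X^{n-1}$ is connected.

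A datum $\beta\in\D(G,S)$ amounts, by the definition in \S\ref{sec:posetdefs}, to a partition of $\n$ whose blocks are of two kinds: ``free'' blocks carrying a $G$--labeling up to simultaneous translation, and ``singular'' blocks, each attached to a single $G$--orbit of $S$ together with an assignment of its members to points of that orbit. To such $\beta$ I associate $Y_\beta\subseteq X^n$: the common intersection of the diagonals $\Delta_{ij}^{\phi(i)\phi(j)^{-1}}$ forced by the labels on each block, together with the walls $W_i^s$ prescribed by the singular assignments. Unwinding definitions, $Y_\beta$ is the graph of the map expressing every $x_i$ in terms of one free coordinate per free block (and pinning each singular block to its prescribed points), so $Y_\beta\cong X^{b(\beta)}$ with $b(\beta)$ the number of free blocks; in particular $Y_\beta$ is connected (and irreducible in the algebraic case), with $\operatorname{codim} Y_\beta = \dim(X)\cdot(n-b(\beta)) = \dim(X)\cdot\rk(\beta)$. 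Being a connected intersection of members of the arrangement, $Y_\beta$ lies in a unique layer.

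Next I would prove that each $Y_\beta$ \emph{is} a layer and, conversely, that every layer is some $Y_\beta$, by analyzing intersections of generators. Given a layer $L$, let $\bar A(L)$ be the set of all generators containing $L$. Using that $L$ is connected and $S$ is finite, one checks that $\bar A(L)$ is ``closed'': if $\Delta_{ij}^g,\Delta_{jk}^h\in\bar A(L)$ then $\Delta_{ik}^{gh}\in\bar A(L)$; if two distinct diagonals $\Delta_{ij}^g,\Delta_{ij}^{g'}$ lie in $\bar A(L)$ then the $j$-coordinate is forced into the finite set $X^{g^{-1}g'}\subseteq S$, hence constant on $L$, so $L$ is pinned there and the whole block of $j$ becomes singular; diagonals between pinned indices are automatically present; and for each $i$ at most one wall $W_i^s$ occurs. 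Reading a partition of $\n$ (indices joined by some diagonal in $\bar A(L)$), the induced labels, and the induced singular assignments off $\bar A(L)$ thus yields a well-defined $\beta=\beta(L)\in\D(G,S)$ with $\bigcap_{H\in\bar A(L)}H = Y_\beta$. Since $Y_\beta$ is connected, contains $L$, and $L$ is a connected component of an intersection containing $Y_\beta$, maximality forces $L = Y_\beta$; conversely each $Y_\beta$ equals the intersection of all generators containing it, hence is a layer, and $\beta\mapsto Y_\beta$ is injective because $\beta$ is recovered from $Y_\beta$ via $\bar A(Y_\beta)$. Matching the refinement order on $\D(G,S)$ with containment of wall-sets gives $\beta\leq\beta'$ iff $Y_\beta\supseteq Y_{\beta'}$, so the bijection is an isomorphism of posets, the poset of layers being ordered by reverse inclusion. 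For $\G$--equivariance, $\G=G\wr\sym$ acts on $X^n$ by permuting coordinates and translating, and one computes $(\vec g,\sigma)\cdot\Delta_{ij}^h = \Delta_{\sigma(i),\sigma(j)}^{g_{\sigma(i)}\,h\,g_{\sigma(j)}^{-1}}$ and $(\vec g,\sigma)\cdot W_i^s = W_{\sigma(i)}^{g_{\sigma(i)}\cdot s}$ (using that $S$ is $G$--stable); hence $\G$ permutes the generators, the sets $\bar A(L)$ transform accordingly, and the resulting action on $\beta$ agrees formula by formula with the action on $\D(G,S)$ from \S\ref{sec:posetdefs}.

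The main obstacle is the connectivity bookkeeping in the middle step: one must show both that the subvariety $Y_\beta$ attached to a combinatorial datum is connected --- so that $\D(G,S)$ parametrizes individual layers rather than unions of them --- and, dually, that an arbitrary intersection of generators produces no hidden extra components beyond those predicted by the data. Both rest on the observation that, once $S$ is finite, every relation beyond a maximal cocycle on a block (a repeated diagonal, or a diagonal meeting a singular wall) forces an entire block to collapse onto rigid points of $S$, leaving a clean product of a point with copies of $X$; the care needed is in verifying that iterating this collapse terminates in exactly the normal form underlying the definition of $\D(G,S)$, in particular that a free block touching $S$ is reclassified as singular and that indices pinned to a common $G$--orbit are grouped into a single block.
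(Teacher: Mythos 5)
Your proposal is correct and follows essentially the same route as the paper's proof of Theorem \ref{thm:layers}: you build the layer $X^{\parts}$ explicitly as a connected intersection of diagonals and walls, recover the combinatorial datum from the set of all generators containing a given layer using exactly the intersection facts of Lemma \ref{lem:intersect}, and verify order-preservation and $\G$--equivariance on generators. The only cosmetic difference is that you package the zero block as several ``singular blocks'' indexed by $G$--orbits of $S$, which is an equivalent bookkeeping of the coloring $z:Z\to S$.
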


This description opens the door to cohomology calculations: considering a
spectral sequence for complements of arrangements (see \cite{Petersen2017} and also
\cite{Totaro1996,bibby1,dupont}), one obtains a description of the $E_1$--page in
terms of the poset's Whitney homology.
Furthermore, when $X$ is a smooth projective algebraic variety, a
weight argument guarantees that there could be at most one nonzero
differential.
Thus, in this case one is closer to getting a hand on the cohomology.

We summarize the explicit description of the spectral sequence machinery, following the simplifications that arise from our combinatorial analysis, in 
the following (Theorem \ref{thm:ss}):

\begin{introthm}[\textbf{Simplified spectral sequence}]
\label{thm:spectral_sequence}
There is a spectral sequence with 
\[
E_1^{pq} = \bigoplus_{\beta\in\D^p(G,S)} \Ho_c^q(X^\beta)\otimes
\widetilde{\Ho}{}^{p-2}(\zero,\beta) \implies \Ho_c^{p+q}(\Conf_n^G(X\setminus
S)).
\]

Here, the summands are indexed by poset elements $\beta$ of rank $p$, and $X^\beta$ denotes the corresponding layer in $X^n$. The term
$\widetilde{\Ho}{}^{p-2}(\zero,\beta)$ denotes the reduced cohomology of the order
complex for the interval $(\zero,\beta)\subset \D(G,S)$, and is therefore described
explicitly by Theorem \ref{thm:intervals}.

When $X$ is a smooth projective variety, the sequence degenerates at the $E_2$--page, i.e. all differentials vanish past the first page.
\end{introthm}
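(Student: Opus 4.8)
The plan is to obtain the spectral sequence from a general geometric construction for complements of arrangements, then to use the combinatorics of the paper --- Theorems~\ref{thm:posetisom} and~\ref{thm:intervals1} --- to identify and drastically simplify its $E_1$--page, and finally to deduce the degeneration in the projective case from a purity argument. For the first part, I would view $U:=\Conf_n^G(X\setminus S)$ as the complement of the subvariety arrangement $\A(G,X)\subset X^n$ (see~\S\ref{sec:arrangementdefs}) and invoke the spectral sequence for arrangement complements of~\cite{Petersen2017} (compare~\cite{Totaro1996,bibby1,dupont}). This sequence is built from the stratification of $X^n$ by the layers together with the long exact sequences in compactly supported cohomology of open/closed decompositions; it is natural in the pair, converges to $\Ho_c^{*}(U)$, and has an $E_1$--page assembled from $\Ho_c^{*}(X^\beta)$ over the layers $X^\beta$, each tensored with the reduced cohomology $\H^{*}$ of the order complex of the open interval below $\beta$ in the poset of layers. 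Since the construction is purely topological, this already yields the existence statement when $X$ is a CW complex; the detailed bookkeeping is carried out in~\S\ref{sec:specseq}.

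Next I would feed in the combinatorics. By Theorem~\ref{thm:posetisom} the poset of layers is $\D(G,S)$, $\G$--equivariantly, so the intervals in question are the intervals $(\zero,\beta)\subset\D(G,S)$, and the layer $X^\beta$ is --- as one reads off from the description of $\A(G,X)$, whose pieces are of the form $\{x_i=g x_j\}$ or $\{x_i=s\}$ --- a product of finitely many copies of $X$ with a finite set. Theorem~\ref{thm:intervals1} then collapses the local factors: because $[\zero,\beta]$ is a geometric lattice of rank $\rk(\beta)$, its proper part is homotopy equivalent to a wedge of $(\rk(\beta)-2)$--spheres, so $\H^{j}(\zero,\beta)$ is a free module that vanishes unless $j=\rk(\beta)-2$. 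Hence only layers of a given rank $p$ contribute to column $p$, and the $E_1$--page acquires exactly the stated form, with $E_1^{p,q}=\bigoplus_{\beta\in\D^{p}(G,S)}\Ho_c^{q}(X^\beta)\otimes\H^{p-2}(\zero,\beta)$; the $\G$--equivariance of the whole display is inherited from naturality of the construction and the equivariant isomorphism of Theorem~\ref{thm:posetisom}.

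For the degeneration, assume $X$ is smooth and projective. The construction of the spectral sequence is geometric, so it lives in the category of mixed Hodge structures (respectively, of $\ell$--adic Galois representations), and each differential $d_r\colon E_r^{p,q}\to E_r^{p+r,q-r+1}$ is a morphism there. By the previous paragraph each $X^\beta$ is again smooth projective, so $\Ho_c^{q}(X^\beta)=\Ho^{q}(X^\beta)$ is pure of weight $q$, whereas $\H^{p-2}(\zero,\beta)$, being the cohomology of a finite simplicial complex, carries the trivial structure and is pure of weight $0$. Therefore $E_1^{p,q}$ is pure of weight $q$, and the same holds for every subquotient $E_r^{p,q}$. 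A morphism between pure structures of different weights is zero, so $d_r$ vanishes whenever $q\neq q-r+1$, i.e. whenever $r\neq 1$; thus $E_2=E_\infty$.

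The point demanding the most care is the interface between the first two steps: one needs the arrangement spectral sequence in the generality of arrangements of positive--dimensional subvarieties (not merely hyperplanes), and one must verify that its raw $E_1$--page --- which before simplification records $\H^{j}(\zero,\beta)$ for all $j$ and all $\beta$ --- repackages precisely into the stated rank--indexed form once Theorem~\ref{thm:intervals1} is applied. By contrast the degeneration is formal, following from nothing more than the observation that $E_1^{p,q}$ is pure of weight $q$ together with the weight--exactness of the spectral sequence.
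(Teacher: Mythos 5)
Your proposal is correct and follows essentially the same route as the paper: invoke Petersen's spectral sequence for the stratification of $X^n$ by layers, use Theorem~\ref{thm:layers} together with the geometric-lattice (Cohen--Macaulay) structure of intervals from Theorem~\ref{thm:intervals} to concentrate each layer's contribution in the single bidegree $(\rk\beta, q)$, and deduce $E_2$-degeneration from purity of weights exactly as in Totaro's argument. The paper's §\ref{sec:specseq} does precisely this, citing Example 3.10 of \cite{Petersen2017} for the repackaging step you flag as the delicate point.
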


Recall that certain invariants of $\Conf_n^G(X\setminus S)$ can be computed already from any page of a spectral sequence converging to $\Ho^*(\Conf_n^G(X\setminus S))$. These are the \emph{generalized Euler characteristics}, or \emph{cut-paste invariants}, discussed briefly in \S\ref{sec:motive}. Universal among those is the motive, i.e. the class $[\Conf_n^G(X\setminus S)]$ in the Grothendieck ring of varieties. Our combinatorial calculations then give:
\begin{introthm}[\textbf{Motivic factorization}]\label{thm:cut-paste}
Let $G$ act on an algebraic variety $X$ over an algebraically closed field $k$ as above, with singular set $S\neq \emptyset$. Then in the Grothendieck ring $\operatorname{K}_0(k)$,
\[
[\Conf_n^G(X\setminus S)] = \prod_{i=0}^{n-1} ([X]-|S|-|G|i).
\]
An analogous factorization for a free action is given in Theorem \ref{thm:motive} below.
\end{introthm}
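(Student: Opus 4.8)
The strategy is to compute the class $[\Conf_n^G(X\setminus S)]$ in $\operatorname{K}_0(k)$ by induction on $n$, using the fibration-like structure that comes from forgetting the last point. Concretely, projection onto the first $n-1$ coordinates gives a map $\Conf_n^G(X\setminus S)\to \Conf_{n-1}^G(X\setminus S)$, and I would argue that this is a Zariski-locally-trivial fibration (or at least piecewise-trivial, which is all that is needed in $\operatorname{K}_0(k)$) whose fiber is $X$ with the singular set $S$ removed together with the $n-1$ orbits already occupied by $x_1,\dots,x_{n-1}$. Since the action is almost free away from $S$, each of those $n-1$ orbits consists of exactly $|G|$ points, and these points are disjoint from $S$ and from each other by the orbit-configuration condition. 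Hence the fiber has class $[X] - |S| - |G|(n-1)$ in the Grothendieck ring. The scissor (cut-and-paste) relations in $\operatorname{K}_0(k)$ then give $[\Conf_n^G(X\setminus S)] = ([X]-|S|-|G|(n-1))\cdot[\Conf_{n-1}^G(X\setminus S)]$, and the factorization follows by unwinding the recursion with base case $[\Conf_0^G(X\setminus S)] = 1$ (or $[\Conf_1^G(X\setminus S)] = [X]-|S|$).

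Alternatively — and this is the approach I would actually emphasize, since it ties the statement to the combinatorics developed in the paper — one can extract the motive from the spectral sequence of Theorem \ref{thm:spectral_sequence}. Because the class in $\operatorname{K}_0(k)$ is a cut-paste invariant, it can be read off from the $E_1$-page regardless of whether the sequence degenerates: it equals the alternating sum $\sum_{\beta\in\D(G,S)} (-1)^{\rk\beta}\,[X^\beta]\cdot \operatorname{rank}\widetilde{\Ho}{}^{\rk\beta-2}(\zero,\beta)$, suitably interpreted via compactly-supported classes. Using that $\widetilde{\Ho}{}^{*}(\zero,\beta)$ of a geometric lattice is concentrated in top degree with rank $|\mu(\zero,\beta)|$ and sign $(-1)^{\rk\beta}$ (Theorem \ref{thm:intervals1}), this collapses to $\sum_{\beta} \mu(\zero,\beta)\,[X^\beta]$. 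Now $[X^\beta]$ is itself a monomial in $[X]$: a layer of rank $\rk\beta$ is (a bundle over) a product of $n-\rk\beta$ copies of $X$, so $[X^\beta] = [X]^{\,n-\rk\beta}$. Therefore the motive equals $\sum_{\beta\in\D(G,S)} \mu(\zero,\beta)\,[X]^{\,n-\rk\beta}$, which is exactly the characteristic polynomial $\chi(\D(G,S);t)$ evaluated at $t=[X]$. Invoking the Characteristic Polynomial theorem, this is $\prod_{i=0}^{n-1}([X]-|S|-|G|i)$, as claimed; the case $S=\emptyset$ follows identically from Theorem \ref{thm:charpoly}.

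The main obstacle in the first approach is verifying that the forgetful map is piecewise-trivial over the base with the asserted fiber — one must check that the ``bad locus'' in the fiber (the forbidden orbits) varies nicely enough over $\Conf_{n-1}^G(X\setminus S)$ to admit a stratification on which it becomes a constant finite set, and that $S$ itself is a fixed finite set so contributes a constant $|S|$; this is where finiteness of $G$ and of $S$, and almost-freeness of the action, are genuinely used. In the second approach the main subtlety is bookkeeping with compactly-supported classes and signs in the spectral sequence, and justifying that $[X^\beta] = [X]^{n-\rk\beta}$ in $\operatorname{K}_0(k)$ — i.e. that each layer, though possibly a nontrivial covering-type object over a power of $X$, still has the class of a power of $[X]$ because the relevant maps are finite étale of degree dividing powers of $|G|$ and the fibers are rationalized away, or more simply because the layer is literally isomorphic as a variety to $X^{n-\rk\beta}$ after the coordinate description of Theorem \ref{thm:posetisom}. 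I would present the inductive argument as the clean proof and remark that it is consistent with, and illuminated by, the spectral-sequence/characteristic-polynomial computation.
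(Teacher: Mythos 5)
Your primary (inductive) argument is correct and takes a genuinely different route from the paper. The paper does not induct on $n$: it proves the identity $[\M(G,X)]=\sum_{\beta\in\D(G,S)}\mu(\zero,\beta)\,[X^{\beta}]$ in one stroke, by observing that in this formal sum each point $p\in X^n$ is counted $\sum_{\beta\le \beta_p}\mu(\zero,\beta)$ times (where $\beta_p$ is the maximal layer through $p$), so that the defining recursion of the M\"obius function leaves exactly the points of the complement, each counted once; then $X^{\beta}\cong X^{n-\rk\beta}$ identifies the sum with $\chi(\D(G,S);[X])$ and Theorem \ref{thm:charpoly} supplies the factorization. Your induction is more elementary and bypasses the combinatorics entirely; the paper's route makes the appearance of the characteristic polynomial structural rather than an after-the-fact coincidence of two factorizations. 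To make your induction airtight, replace the appeal to piecewise triviality of the forgetful map by a direct scissor decomposition of $\Conf_{n-1}^G(X\setminus S)\times X$: the locus $\{x_n\in G x_i\}$ is isomorphic as a variety to $\Conf_{n-1}^G(X\setminus S)\times G$ via $(\vec{x},g)\mapsto(\vec{x},g.x_i)$ because the action is free off $S$, the locus $\{x_n\in S\}$ is $\Conf_{n-1}^G(X\setminus S)\times S$, and all these closed subvarieties are pairwise disjoint by the orbit-configuration condition; subtracting their classes gives the recursion with no fibration argument needed.

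Your second approach, however, contains a genuine logical gap as stated: the class in $\operatorname{K}_0(k)$ is the \emph{universal} cut-paste invariant, so it cannot be ``read off from the $E_1$-page'' of a cohomological spectral sequence --- cohomology (with its weights) is a realization of the motive, not conversely, and two varieties can have identical $E_1$-data while having different classes in $\operatorname{K}_0(k)$. The alternating-sum extraction from $E_1$ legitimately computes $\chi_c$, the $\mathbb{F}_q$-point count, or the Hodge--Deligne polynomial of $\M(G,X)$, but not $[\M(G,X)]$ itself. The formula you arrive at, $\sum_{\beta}\mu(\zero,\beta)[X]^{n-\rk\beta}$, is nonetheless the correct one; the repair is precisely the paper's argument, namely proving that identity directly in $\operatorname{K}_0(k)$ by M\"obius inversion over the stratification of $X^n$ by layers, with no reference to cohomology.
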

In particular, this gives a formula for the number of $\mathbb{F}_q$-points in $\Conf_n^G(X\setminus S)$ for every $q$ divisible by $\operatorname{char}(k)$. Alternatively, when $X\setminus S$ is smooth, one gets a formula for the classical Euler characteristic
of $\Conf_n^G(X\setminus S)$.

In \S\ref{sec:localarr}, we analyze the local structure of the arrangement $\A(G,X)$, i.e. its germ at every point in $X^n$. A surprising conclusion is that, to first order, the arrangement $\A(G,X)$ is isomorphic to a product of orbit configuration spaces for groups possibly different from $G$. As a byproduct of our analysis we get a new proof of the following result.
\begin{introcor}[\textbf{Stabilizers on curves}]
Suppose $G$ acts faithfully on a algebraic curve $C$ over some algebraically closed field $k$. Then the stabilizer in $G$ of any smooth point is cyclic.
\end{introcor}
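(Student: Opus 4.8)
The plan is to deduce this from the local analysis of the arrangement $\A(G,C)$ promised in \S\ref{sec:localarr}, rather than from classical Riemann surface theory. First I would recall that faithfulness of the $G$-action means $S = \operatorname{Sing}_G(C)$ is a finite $G$-set, so that $\A(G,C)\subset C^n$ is a genuine subspace arrangement and Theorem~\ref{thm:posetisom} applies: its poset of layers is $\D(G,S)$. The point is that the \emph{germ} of this arrangement at a configuration point encodes the stabilizer data at the chosen smooth point. Concretely, I would take $x\in C$ a smooth point with stabilizer $H = \operatorname{Stab}_G(x)\leq G$, and consider a point of $C^n$ whose coordinates are all in the $G$-orbit of $x$ (say all equal to $x$, using $n = |G/H|$ many copies, or more simply $n=2$ and the two coordinates at $x$ and $gx$). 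The local structure theorem identifies the germ of $\A(G,C)$ there with a product of orbit configuration arrangements for the stabilizer groups of the coordinates acting on a formal/étale neighborhood of the fixed point — i.e. on a $1$-dimensional disk (or $\operatorname{Spec}$ of a DVR), with $H$ acting linearly on the tangent line.

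The key step is then: $H$ acts faithfully and linearly on the $1$-dimensional tangent space $T_xC$, hence via a character $H\to k^\times$, and this character must be injective since the action on the curve — and therefore (by faithfulness plus the fact that a curve automorphism fixing a smooth point is determined by its derivative there, at least in the tame case; in general one argues via the completed local ring) its linearization at $x$ — is faithful. A finite subgroup of $k^\times$ is cyclic, so $H$ is cyclic. The only subtlety is characteristic $p$: an automorphism of order $p$ fixing a smooth point of a curve can act trivially on the tangent space (wild ramification), so one cannot immediately say $H\hookrightarrow k^\times$. I would handle this by instead using the action of $H$ on the completed local ring $\widehat{\O}_{C,x}\cong k[[t]]$: an automorphism fixing $x$ and acting trivially on $k[[t]]$ is the identity, so $H\hookrightarrow \Aut(k[[t]])$; and it is a standard fact (or one re-derives it from the local arrangement picture, where the orbit configuration arrangement on a disk for a non-cyclic group would be empty/degenerate in a way incompatible with $C^n$ being the ambient space) that a finite subgroup of $\Aut(k[[t]])$ fixing the closed point is cyclic — in the tame case via the faithful character on $t$, in the wild case because such finite subgroups are cyclic by local class field theory / Serre's analysis of ramification groups.

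The main obstacle I anticipate is precisely this wild-ramification case, where the clean statement ``$H$ embeds in $k^\times$'' fails and one must appeal to a genuinely arithmetic input about finite subgroups of $\Aut(k[[t]])$. If the paper's local structure theorem is stated only for tame actions (which is plausible given the finiteness running assumptions and the ``almost free'' framing), then the corollary should be read in that context and the proof is the short character argument above; otherwise one cites the classification of finite subgroups of $\operatorname{Aut}(k[[t]])$. A secondary, purely bookkeeping concern is making the reduction from ``the germ is a product of orbit configuration arrangements'' to ``$H$ acts faithfully on a disk'' precise — one needs that the factor of the germ corresponding to a coordinate at $x$ really does record the full stabilizer $H$ and its action, which should fall directly out of how $\A(G,X)$ is defined in \S\ref{sec:arrangementdefs} together with the identification of layers with $\D(G,S)$.
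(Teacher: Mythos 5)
Your main argument --- linearize the stabilizer $H=\operatorname{Stab}_G(x)$ at $x$, observe that a faithful action on the one-dimensional space $T_xC$ embeds $H$ into $k^\times$, and conclude cyclicity from finiteness --- is correct in characteristic zero, and more generally whenever $|H|$ is prime to $\operatorname{char}(k)$. It is in substance the paper's argument: the paper passes from the germ of $\A(G,C)$ at a point of a maximal layer to its tangent-space arrangement, notes that this linear arrangement is a realization over $k$ of an interval containing a Dowling lattice $\D[m](H)$, and invokes Dowling's realizability theorem to force $H$ cyclic; unwinding Dowling's proof produces exactly the character $H\to k^\times$ that you construct directly. So in the tame case your route is a legitimate, more self-contained shortcut that bypasses the matroid-realizability machinery.

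The wild case is where the proposal genuinely breaks, and the proposed patch asserts a false fact. A finite subgroup of $\Aut(k[[t]])$ fixing the closed point need \emph{not} be cyclic in characteristic $p$: Serre's ramification filtration only shows it is an extension of a cyclic prime-to-$p$ group by a $p$-group, and that $p$-group can be non-cyclic. Concretely, the additive group $\mathbb{F}_{p^2}\cong(\Z/p)^2$ acts on $\mathbb{P}^1_{\overline{\mathbb{F}}_p}$ by translations $x\mapsto x+a$; the action is faithful and almost free (only $\infty$ has nontrivial stabilizer), yet the stabilizer of the smooth point $\infty$ is the whole non-cyclic group. In the coordinate $t=1/x$ these automorphisms read $t\mapsto t/(1+at)$, which are trivial on $T_\infty\mathbb{P}^1$, so no tangent-space character can detect them. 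This is simultaneously a counterexample to the corollary as literally stated in positive characteristic and a gap in the paper's own proof: the claim that the tangent spaces realize the interval over $k$ needs distinct hypersurfaces $H_{ij}(g)$, $H_{ij}(h)$ through a point to have distinct tangent hyperplanes, which is precisely tameness of the stabilizer. The statement should carry the hypothesis $\operatorname{char}(k)=0$ (or that stabilizer orders are prime to the characteristic), and under that hypothesis your first, short argument already constitutes a complete proof; the appeal to ``local class field theory'' cannot rescue the wild case.
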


Lastly, our handle on the combinatorics of these arrangements can be exploited to understand what happens when one removes from $X$ a set $T$ other than the set of singular points $S$. We consider this more general case in \S\ref{sec:subarrangements}, but note now that all of our theorems hold true for these spaces as well.

For example, when $T$ is a $G$--invariant subset of $S$, the group $G$ now acts on $X\setminus T$ with nontrivial stabilizers. The resulting orbit configuration space
space $\Conf_n^G(X\setminus T)$ is the complement in $X^n$ of a subarrangement of $\A(G,X)$. The new poset of layers is a
subposet of $\D(G,S)$, which inherits many properties from $\D(G,S)$ to which our study applies. These types of arrangements arise naturally, e.g. from roots systems 
(see \S \ref{sec:subarrangements}).

\subsection{Acknowledgements}
An extended abstract of this work appeared in 
the proceedings of FPSAC 2018 \cite{FPSAC}.
The authors would like to thank Emanuele Delucchi, Graham Denham, and John Stembridge for many useful conversations and insights that helped shape this paper. 

\section{A generalization of Dowling lattices}\label{sec:combinatorics}

In \cite{Dowling1973}, Dowling defined a family of lattices $\D(G)$ dependent on
a positive integer $n$ and a finite group $G$. We recall his construction here.

Let $\n:=\{1,2,\dots,n\}$. 
A \textit{projectivized $G$-coloring} of a subset $B\subseteq\n$ is a function $b:B\to G$ defined up to the following equivalence:
$b:B\to G$ and $b':B\to G$ are \textbf{equivalent} if there is some $g\in G$ for which $b'=bg$.
The reader might benefit from thinking of such an equivalence class of colorings as a point in projective space
$$
[h_0:\ldots:h_d] \sim [h_0 g : \ldots : h_d g] \; \text{ for } g\in G.
$$
A \textbf{partial $G$-partition} of $\n$ is a set $\wt{\beta}=\{\wt{B_1},\dots,\wt{B_\ell}\}$ consisting of a partition $\beta=\{B_1,\dots,B_\ell\}$ of the subset $\cup B_i\subseteq\n$ along with projectivized $G$-colorings $b_i:B_i\to G$ for each $i$.
The \textbf{zero block} of a partial $G$-partition $\wt{\beta}$ of $\n$ 
is the set 
$Z:=\n\setminus\cup_{B\in\beta} B$.

\begin{notation}
We take the convention of using an uppercase letter $B$ for a set, the 
corresponding lowercase letter for the function $b:B\to G$, and $\wt{B}$ for
the equivalence class of $b:B\to G$.
\end{notation}

The \textbf{Dowling lattice} $\D(G)$ is the set of partial $G$--partitions of $\n$. 
We will consider the elements of $\D(G)$ as ordered pairs $(\wt{\beta},Z)$ where
$\wt{\beta}$ is a partial $G$--partition and $Z$ is its zero block. 
This set is a lattice with partial order 
determined by the following covering relations:
\begin{enumerate}
\item 
$(\wt{\beta}\cup\{\wt{A},\wt{B}\},Z)\prec (\wt{\beta}\cup\{\wt{C}\},Z)$
where $C=A\cup B$ with $c=a\cup bg$ for some $g\in G$, and
\item 
$(\wt{\beta}\cup\{\wt{B}\},Z)\prec(\wt{\beta},Z\cup B)$.
\end{enumerate}
The lattice $\D(G)$ has rank function given by
$\rk(\wt{\beta},Z)=n-\ell(\beta)$, where $\ell(\beta)$ is the number of blocks
in the partition $\beta$.
The (unique) minimum element of $\D(G)$ is the partition with each element in its own block and with no zero block, while the (unique) maximum element has every element in the zero block. See Figure \ref{fig:dowling lattice} for an example.

\begin{remark}
While it is not necessary to record the zero block in an element of $\D(G)$, we
do so because it is useful in understanding our generalization which 
involves adding a coloring to the zero block by some finite $G$-set.
\end{remark}

\subsection{Introducing the posets}\label{sec:posetdefs}

Let $G$ be a finite group acting on a finite set $S$.
\begin{definition}[\textbf{The $S$--Dowling poset}]\label{def:poset}
Let $\D(G,S)$ be the set of ordered pairs $\parts$ where $\wt{\beta}$ is
a partial $G$--partition of $\n$ and $z$ is an $S$--coloring of its zero block, 
i.e. a function $z:Z\rightarrow S$.
\end{definition}
\begin{notation}
To denote an element $(\wt{\beta},z)$ we will extend the standard notation of set partitions, as illustrated by the following example:
$$
[1_{g_1}3_{g_3}|2_{g_2}4_{g_4}6_{g_6}||5_{z_5}7_{z_7}]
$$
denotes the partial set partition $[13|246]$ with projectivized colorings $[g_1:g_3]$ and $[g_2:g_4:g_6]$ respectively, and zero block $\{5,7\}$ colored by the function $z$.
\end{notation}

The set $\D(G,S)$ is partially ordered with similar covering relations,
given by either merging two blocks or coloring one by $S$,
as follows:
\begin{description}
\item[(\textbf{merge})]
$(\wt{\beta}\cup\{\wt{A},\wt{B}\},z)\prec
(\wt{\beta}\cup\{\wt{C}\},z)$
where $C=A\cup B$ with $c=a\cup bg$ for some $g\in G$, and
\item[(\textbf{color})]
$(\wt{\beta}\cup\{\wt{B}\},z) \prec (\wt{\beta},z')$
where $z'$ is the extension of $z$ to 
$Z' = B\cup Z$ given on $B$ by a composition
\[
{B} \overset{b}{\rightarrow} G \overset{f}{\rightarrow} S
\]
for some $G$--equivariant function $f$.
\end{description}
Just as with the Dowling lattice, the poset $\D(G,S)$ is ranked with the
\textbf{rank} of $\parts$ given by $\rk\parts=n-\ell(\beta)$. 

\begin{remark}
When coloring a block of $\wt{\beta}$, the $G$--equivariant function $f:G\to S$
is determined by a choice of $f(e)=s\in S$, where $e$ is the identity in $G$. 
Then one can extend $z$ to $B$ by setting $z'(i)=b(i).s$ for $i\in B$. 
\end{remark}

Recall the wreath product $\G$, sometimes denoted by $G\wr\sym$, which is a semidirect
product of $G^n$ with the symmetric group $\sym$. It acts on the $S$--Dowling poset
 $\D(G,S)$ as follows.
Let $w=(g_1,\dots,g_n,\sigma)\in\G$ and $\parts\in\D(G,S)$. Then we have
$w.\parts=(\wt{\beta'},z')$ where 
\begin{itemize}
\item $\beta'=\{\sigma.B\ |\ B\in\beta\}$ with zero block
$\sigma.Z$,
\item $b'_i:(\sigma.B)\to G$ is given by $b'(\sigma(j))=g_jb(j)$, 
and
\item $z':(\sigma.Z)\to S$ is given by $z'(\sigma(j)) = g_j.z(j)$.
\end{itemize}
We leave it as an exercise to the reader to verify that the action preserves the
order. 

\begin{remark}\label{rmk:sets}
While it is convenient to consider (partial) partitions of the set
$\n=\{1,2,\dots,n\}$, it will sometimes prove to be more convenient to consider partitions of any finite set $\tau$, for example in Theorem
\ref{thm:intervals}.
That is, one could define $\D[\tau](G,S)$ as the set of partial $G$--partitions
of $\tau$ whose zero block is colored by $S$. 
In the case that $\tau=\n$, we have $\D[\n](G,S)=\D(G,S)$, and in general when
$|\tau|=n$ we have $\D[\tau](G,S)\cong\D(G,S)$. Note that the latter isomorphism depends on the choice of bijection $\tau\simeq \n$.
\end{remark}

\subsection{Examples}\label{sec:posetexs}

Here we introduce the primary examples, which will be carried throughout this
paper. The first describes the partition and Dowling lattices as specializations
of $S$--Dowling posets.

\begin{example}\label{ex:Dowling}
The Dowling lattice $\D(G)$ is equal to $\D(G,S)$ whenever $S$ consists of a
single point.
The Hasse diagram in the case $n=2$ and $G=\Z_2$ is depicted in Figure \ref{fig:Dowling}.

The lattice $\Q_{\n}$ of set partitions of $\n$ can be realized with the 
trivial group $G=\{1\}$ and no zero block, $\Q_{\n}\cong \D(\{1\},\emptyset)$.
As in \cite[Thm.~1(e)]{Dowling1973}, we also have
$\Q_{\n}\cong\D[n-1](\{1\})=\D[n-1](\{1\},\{0\})$. 

\begin{figure}[htb] \label{fig:dowling lattice}
\begin{tikzpicture}[scale=1]
\foreach \x in {-3,-1,1,3}
{
\draw[-] (\x,1.0)--(0,-1.0);
\draw[-] (\x,2.0)--(0,4.0);
}
\node at (0,-1.5)   {$[1_e|2_e||\emptyset]$};
\node at (-3,1.5)   {$[2_e||1_0]$};
\node at (-1,1.5)   {$[1_e||2_0]$};
\node at (1,1.5)   {$[1_e2_e||\emptyset]$};
\node at (3,1.5)    {$[1_e2_\iota||\emptyset]$};
\node at (0,4.5) {$[\emptyset||1_02_0]$};
\end{tikzpicture}
\caption{Type C Dowling lattice $\D[2](\Z_2)=\D[2](\Z_2,\{0\})$. See Example \ref{ex:Dowling}.}
\label{fig:Dowling}
\end{figure}
\end{example}

\begin{example}[\textbf{Type C Dowling poset}]\label{ex:typeC}
Let $G=\Z_2$ act trivially on a finite set $S$.
In the case that $|S|$ is 2 or 4, the poset $\D(G,S)$ was studied in
\cite{bibby2}.
Here, the poset describes the combinatorial structure of an arrangement arising
naturally from the type C root system, which we will revisit in Example
\ref{ex:typeCP}.

The Hasse diagram for $\D[2](G,S)$ when $S=\{\pm1\}$ is depicted in Figure \ref{fig:DZ2-toric}.

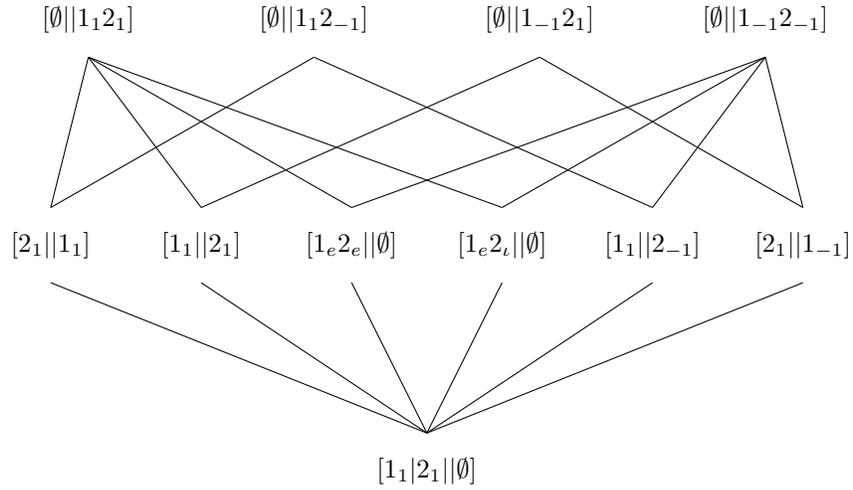
\begin{figure}[htb]
\begin{tikzpicture}[scale=1]
\foreach \x in {-5,-3,-1,1,3,5}
{
\draw[-] (\x,1.0)--(0,-1.0);
}
\foreach \x in {-5.0,-3.0,-1,1}
{
\draw[-] (-4.5,4.0)--(\x,2.0);
\draw[-] (4.5,4.0)--(-\x,2.0);
}
\draw[-] (1.5,4.0)--(5,2.0);
\draw[-] (-1.5,4.0)--(3,2.0);
\draw[-] (1.5,4.0)--(-3,2.0);
\draw[-] (-1.5,4.0)--(-5,2.0);
\node at (0,-1.5)   {$[1_e|2_e||\emptyset]$};
\node at (-5,1.5)   {$[2_e||1_{1}]$};
\node at (-3,1.5)   {$[1_e||2_{1}]$};
\node at (-1,1.5)   {$[1_e2_e||\emptyset]$};
\node at (1,1.5)    {$[1_e2_\iota||\emptyset]$};
\node at (3,1.5)    {$[1_e||2_{-1}]$};
\node at (5,1.5)    {$[2_e||1_{-1}]$};
\node at (-4.5,4.5) {$[\emptyset||1_12_1]$};
\node at (-1.5,4.5) {$[\emptyset||1_12_{-1}]$};
\node at (1.5,4.5)  {$[\emptyset||1_{-1}2_1]$};
\node at (4.5,4.5)  {$[\emptyset||1_{-1}2_{-1}]$};
\end{tikzpicture}

\caption{Type C Dowling poset $\D[2](\Z_2,\{\pm1\})$. See Examples
\ref{ex:typeC} and \ref{ex:typeCP}, and note the isomorphism with the poset
depicted in Figure \ref{fig:PZ2-toric}. The quotient of this poset by the action
of $\sym[2][\Z_2]$ is depicted in Figure \ref{fig:OZ2}.}
\label{fig:DZ2-toric}
\end{figure}

\end{example}

\begin{example}\label{ex:Z2}
In contrast to the last example, with $\Z_2$ acting trivially on $\{\pm1\}$,
consider the nontrivial action of $\Z_2$ on $\{\pm1\}$. These two $S$--Dowling
posets have the same underlying set but a different partial order. 
In fact, these two posets are not even isomorphic: in one example every maximal element covers 3 elements, while in the other maximal elements cover either 2 or 4. 
The Hasse
diagrams for the trivial and nontrivial actions are depicted in Figures
\ref{fig:DZ2-toric} and \ref{fig:Z2}.

\begin{figure}[htb]
\begin{tikzpicture}[scale=1]
\foreach \x in {-5,-3,-1,1,3,5}
{
\draw[-] (\x,1.0)--(0,-1.0);
}
\foreach \x in {-5.0,-3.0}
{
\draw[-] (-4.5,4.0)--(\x,2.0);
\draw[-] (4.5,4.0)--(-\x,2.0);
}
\draw[-] (-1.5,4)--(1,2)--(1.5,4);
\draw[-] (-4.5,4)--(-1,2)--(4.5,4);
\draw[-] (1.5,4.0)--(5,2.0);
\draw[-] (-1.5,4.0)--(3,2.0);
\draw[-] (1.5,4.0)--(-3,2.0);
\draw[-] (-1.5,4.0)--(-5,2.0);
\node at (0,-1.5)   {$[1_e|2_e||\emptyset]$};
\node at (-5,1.5)   {$[2_e||1_{1}]$};
\node at (-3,1.5)   {$[1_e||2_{1}]$};
\node at (-1,1.5)   {$[1_e2_e||\emptyset]$};
\node at (1,1.5)    {$[1_e2_\iota||\emptyset]$};
\node at (3,1.5)    {$[1_1||2_{-1}]$};
\node at (5,1.5)    {$[2_1||1_{-1}]$};
\node at (-4.5,4.5) {$[\emptyset||1_12_1]$};
\node at (-1.5,4.5) {$[\emptyset||1_12_{-1}]$};
\node at (1.5,4.5)  {$[\emptyset||1_{-1}2_1]$};
\node at (4.5,4.5)  {$[\emptyset||1_{-1}2_{-1}]$};
\end{tikzpicture}
\caption{The Dowling poset $\D[2](\Z_2,\{\pm1\})$ where $\Z_2$ acts
nontrivially on $\{\pm1\}$ (see Example \ref{ex:Z2}). 
Compare to Figure \ref{fig:DZ2-toric}, where there
is an obvious set bijection that does not preserve the order.}
\label{fig:Z2}
\end{figure}
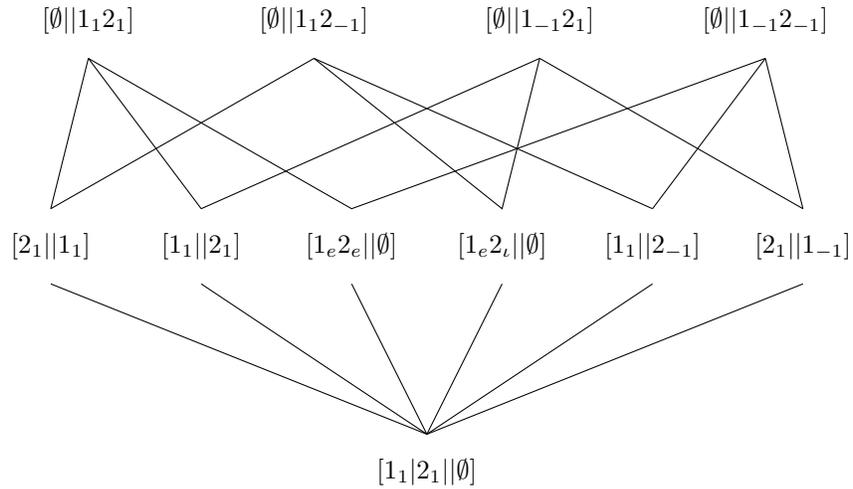
\end{example}

\begin{example}[\textbf{Hexagonal Dowling poset}]\label{ex:Z6}
Let $G=\Z_6$, which we identify with the group of $6$th roots of unity
$\{\pm1,\pm\zeta_3,\pm\zeta_3^2\}$. Let us consider $G$ acting on the set 
$S=\{e, z_1, z_2, z_3, w_1, w_2\}$ so that the action of the generator $-\zeta_3$
is by the permutation given by cycle notation $(e)(z_1,z_2,z_3)(w_1,w_2)$.
This poset also arises from an arrangement, which we will revisit in Example
\ref{ex:Z62} below.

Even when $n=2$ this poset is large: There are $6^2=36$ maximal elements, all
with rank two, corresponding to the possible $S$--colorings of $\{1,2\}$.
We include in Figure \ref{fig:OZ6-OZ4} the Hasse diagram for the orbits 
of $\D[2](G,S)$ under the $\G[2]$-action. We will revisit this orbit space in 
Example \ref{ex:Z6orbits} below.
\end{example}

\begin{example}[\textbf{Square Dowling poset}]\label{ex:Z4}
Let $G=\Z_4=\{\pm1,\pm i\}$ act on the set $S=\{e,z_1,z_2,t\}$,
 where the action of $i$ on $S$ is by the permutation $(e)(z_1,z_2)(t)$.
As with the previous examples, this is associated to an arrangement which we
revisit in Example \ref{ex:Z42}.

We later depict a subposet of $\D[2](\Z_4,S)$ in Figure 
\ref{fig:interval}, the intervals under $[\emptyset||1_{z_1}2_{z_2}]$ and
$[\emptyset||1_t2_t]$. It is interesting to note that these intervals are isomorphic to 
$\D[2](\Z_2)$ and $\D[2](\Z_4)$, respectively, which will turn out to be a general 
phenomenon (see Theorem \ref{thm:intervals} below).
\end{example}

\subsection{Functoriality} \label{sec:functoriality}
The first property we state for the posets $\D(G,S)$ is that they behave
naturally with respect to changes in the inputs of $n$, $G$, and $S$. 
This proposition is straightforward to verify.

\begin{proposition}[\textbf{Functoriality}]\label{prop:functoriality}
The $S$--Dowling posets are functorial in the following ways:
\begin{enumerate}

\item \label{prop:monoidality}
The two inclusions $\n, \n[k]\into\n\sqcup\n[k]$ induce a 
$(\G\times\G[k])$--equivariant injective map of ranked posets 
$\D(G,S)\times\D[k](G,S)\to\D[n+k](G,S)$ defined by
\[((\wt{\beta},z),(\wt{\beta}',z')) \mapsto (\wt{\beta}\cup\wt{\beta}',z\cup
z').\]
Here the sets $\n$ and $\n[k]$ could be replaced by any two finite sets.

In particular, multiplying by $\zero\in \D[k](G,S)$ gives a canonical equivariant 
injection $\D(G,S)\to \D[n+k](G,S)$ and exhibits $\D[\bullet](G,S)$ as a functor, 
as stated next.

\item \label{prop:changen} If $\iota:\n\to\n[m]$ is an injective map of sets, 
then there is an injective map of ranked posets $\iota_*:\D(G,S)\to\D[m](G,S)$
defined by 
\[\iota_*(\wt{\beta},z) = (\wt{\beta}'\cup\{\{j\}\ |\ j\notin\im\iota\},z)\]
where $\wt{\beta}'$ is defined via $b\circ \iota^{-1}:\iota(B)\to G$ for each
$\wt{B}\in\wt{\beta}$.

\item \label{prop:changeS}
If $f:S\to T$ is a $G$--equivariant map of sets, then there is a 
$\G$--equivariant map of ranked posets $f_*:\D(G,S)\to\D(G,T)$ defined by
\[f_*(\wt{\beta},z) = (\wt{\beta},f\circ z).\]
Furthermore, if $f$ is surjective (resp. injective) then 
$f_*$ is also surjective (resp. injective).

\item \label{prop:changeG}
If $\nu:G\to H$ is a group homomorphism and $H$ acts on a set $S$ (hence
also inducing an action of $G$ on $S$),
then there is a $\G$--equivariant map of ranked posets $\nu_*:\D(G,S)\to\D(H,S)$
defined by \[\nu_*(\wt{\beta},z) = (\wt{\beta}',z)\]
where $\wt{\beta}'$ is defined via $\nu\circ b:B\to H$ for each
$\wt{B}\in\wt{\beta}$.
Furthermore, if $\nu$ is surjective (resp. injective) then 
$\nu_*$ is also surjective (resp. injective).
\end{enumerate}
\end{proposition}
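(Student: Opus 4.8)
The plan is to verify each of the four functoriality statements by exhibiting the claimed map explicitly, checking that it lands in the target poset, that it respects the rank function, that it is order-preserving, and finally that it carries the asserted equivariance and injectivity/surjectivity properties. In each case the definition of the map is already spelled out in the statement, so the work is purely verification; since ``this proposition is straightforward to verify'' we will be brief, but we organize the checks so that nothing is swept under the rug.

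\begin{proof}[Proof sketch]
We treat the four parts in turn.

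\textbf{(1)} Given $(\wt\beta,z)\in\D(G,S)$ and $(\wt\beta',z')\in\D[k](G,S)$, partition the disjoint union $\n\sqcup\n[k]$ by taking all blocks of $\beta$ (with their $G$-colorings) together with all blocks of $\beta'$; the zero block of the result is $Z\sqcup Z'$, colored by $z\cup z'$. Because the two index sets are disjoint, this is a well-defined partial $G$-partition, and $\ell(\beta\cup\beta')=\ell(\beta)+\ell(\beta')$, so ranks add: $\rk=(n-\ell(\beta))+(k-\ell(\beta'))$ is exactly the rank in $\D[n+k](G,S)$ relative to the convention that singletons are omitted from $\beta$ when counting — here one must be careful that ``blocks'' of a partial $G$-partition include singletons, so $\ell$ counts all blocks including singletons and the formula is consistent. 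Injectivity is immediate since one recovers $(\wt\beta,z)$ and $(\wt\beta',z')$ by restricting to $\n$ and $\n[k]$. A covering relation in $\D(G,S)\times\D[k](G,S)$ is a \textbf{merge} or \textbf{color} move in one of the two factors; under the map this becomes the same move performed inside $\n$ or inside $\n[k]$, hence a covering relation in $\D[n+k](G,S)$, so the map is order-preserving (and in fact a map of ranked posets). Equivariance for $\G\times\G[k]\to\G[n+k]$ is checked by unwinding the three bullet-point formulas for the wreath action on each side. The final remark follows by taking $(\wt\beta',z')=\zero$, the partition of $\n[k]$ into singletons with empty zero block.

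\textbf{(2)} For an injection $\iota\colon\n\into\n[m]$, transport each block $B$ of $\beta$ to $\iota(B)$ with coloring $b\circ\iota^{-1}$ (well-defined on the image, and the projectivization is respected), adjoin a singleton $\{j\}$ for each $j\notin\im\iota$, and keep the zero-block coloring $z$ on $\iota(Z)$. This is a partial $G$-partition of $\n[m]$; the number of blocks is $\ell(\beta)+(m-n)$, so $\rk\iota_*(\wt\beta,z)=m-\ell(\beta)-(m-n)=n-\ell(\beta)$, preserving rank. Injectivity and order-preservation are as in (1), the new singletons being inert. For part (1)'s claim that $\D[\bullet](G,S)$ is a functor, one checks $(\iota\circ\iota')_*=\iota_*\circ\iota'_*$ and $(\mathrm{id})_*=\mathrm{id}$, both immediate from the formula.

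\textbf{(3)} and \textbf{(4)}: For a $G$-equivariant $\mu\colon S\to T$, post-compose the zero-block coloring: $(\wt\beta,z)\mapsto(\wt\beta,\mu\circ z)$. We must check this respects the \textbf{color} move: if $z'$ extends $z$ on a block $B$ via $B\xrightarrow{b}G\xrightarrow{f}S$ with $f$ $G$-equivariant, then $\mu\circ f$ is again $G$-equivariant, so $\mu\circ z'$ is a legal extension of $\mu\circ z$; the \textbf{merge} move is untouched. Ranks are unchanged since $\beta$ is unchanged. Surjectivity (resp. injectivity) of $\mu$ passes to $\mu_*$ because a coloring $Z\to T$ lifts (resp. is determined) pointwise. $\G$-equivariance follows since $\mu$ intertwines the $G$-actions on $S$ and $T$, by the third bullet of the wreath action. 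Part (4) is entirely parallel: for a homomorphism $\nu\colon G\to H$, post-compose each block coloring with $\nu$; the merge move is preserved because $\nu(bg)=\nu(b)\nu(g)$ so the projectivization and the ``$c=a\cup bg$'' condition are carried along, the color move and rank are as before, and surjectivity/injectivity of $\nu$ transfer as for $\mu$.

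The only genuine subtlety — the main point requiring care rather than the main \emph{obstacle}, since there is none — is bookkeeping the rank function consistently across the changing ground set in parts (1) and (2): one must fix once and for all that $\ell(\beta)$ counts \emph{all} blocks of the partial partition, singletons included, and that the zero block is not among them, and then every rank identity above is a one-line count. All equivariance verifications reduce to the displayed formulas for the $\G$-action and are mechanical.
\end{proof}
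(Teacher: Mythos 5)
Your verification is correct and is exactly the routine check the paper omits (it states only that the proposition ``is straightforward to verify''). All the genuinely nontrivial points — the rank bookkeeping in parts (1) and (2), the preservation of the \textbf{color} move under $\mu_*$ and $\nu_*$, and the transfer of injectivity of $\nu$ to equivalence classes of projectivized colorings — are handled or reduce to one-line arguments as you indicate.
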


\begin{remark}
\begin{enumerate}
\item The maps defined in 
Proposition \ref{prop:functoriality}\eqref{prop:monoidality} and \eqref{prop:changen} will be most useful in
studying representation stability in our sequel to this paper \cite{BG2}, when we consider the
sequence of posets $\D(G,S)$ as $n$ grows.
One can view this map $\iota$ as a stabilization operation, padding our partial partitions with singleton
blocks.

\item Rephrasing Proposition \ref{prop:functoriality}\eqref{prop:monoidality} in view
of Remark \ref{rmk:sets} and applying it to general finite sets, it follows that $\D[\bullet](G,S)$ is a \emph{monoidal} functor from 
the category
$(\mathtt{FI},\coprod)$ of finite sets and injections, to
the category \texttt{Pos} of posets.

\item A consequence of Proposition \ref{prop:functoriality}\eqref{prop:changeS} is
that if $|S|>1$, the Dowling lattice $\D(G)$ is a quotient of the poset
$\D(G,S)$, where the fiber above an element $\wt{\beta}\in\D(G)$ consists of all the
possible $S$-colorings of its zero block.
\end{enumerate}
\end{remark}

\subsection{Local structure of the poset}\label{sec:posetprops}

In this section, we give an explicit description of the intervals inside
of the $S$--Dowling posets. The beauty of the local structure (the intervals)
is that we can view our posets as being built out of partition and Dowling
lattices, which we denote by $\Q_{\n}$ and $\D(G)$.

\begin{remark}\label{rmk:notalattice}
While the local structure of the poset is familiar, it is important to note that
the global structure is more complicated.
When $|S|>1$, the poset $\D(G,S)$ is not even a (semi)lattice: 
least upper bounds and greatest lower bounds need not exist.
This is demonstrated in Example \ref{ex:Z2} (see Figure \ref{fig:DZ2-toric}): there exist pairs of elements with multiple minimal upper bounds, and other pairs with no upper bound.

Furthermore, while $\D(G,S)$ has a (unique) minimum element $\zero$ given by the
partition of $\n$ into $n$ singleton blocks, it may have several maximal elements corresponding to different $S$--colorings of $\n$. 
\end{remark}

The following theorem describes the intervals in $\D(G,S)$. It is a generalization
of Dowling's \cite[Theorem 2 and Corollary 2.1]{Dowling1973}, which are the
case with $|S|=1$. The surprising consequence of Theorem \ref{thm:intervals} below, is that when $|S|\geq 2$ the local picture is sensitive to the orbits and stabilizers of $S$.
As mentioned in Remark \ref{rmk:sets}, this theorem is most naturally stated by
considering $\D[\tau](G,S)$ for a general finite set $\tau$ (not just $\n$); we discuss
this further in Remark \ref{rmk:intervals} below.

\begin{theorem}[\textbf{Local structure}]\label{thm:intervals}
Let $S$ be a finite set with an action of a finite group $G$,
and let $\O(S)$ denote its set of $G$--orbits.
For each orbit $\oo\in\O(S)$, pick a representative $s_\oo\in\oo$ and let $G_\oo$ be
the stabilizer of $s_\oo$ in $G$.

For $(\wt{\beta},z_\beta)\in\D(G,S)$, we have 
\begin{equation}\label{eq:lowerint}
\D(G,S)_{\leq(\wt{\beta},z_\beta)}\cong 
\prod_{B\in\beta} \Q_{B} \times \prod_{\oo\in\O(S)} \D[z_{\beta}^{-1}(\oo)](G_\oo)
\end{equation}
and
\begin{equation}\label{eq:upperint}
\D(G,S)_{\geq(\wt{\beta},z_\beta)} \cong \D[\beta](G,S).
\end{equation}
Furthermore, if $(\wt{\alpha},z_\alpha)\leq(\wt{\beta},z_\beta)$, then 
\begin{equation}\label{eq:interval}
[(\wt{\alpha},z_\alpha),(\wt{\beta},z_\beta)]
\cong \prod_{B\in\beta} \Q_{p_B} \times \prod_{\oo\in\O(S)}
\D[r_\oo](G_\oo),
\end{equation}
where $p_B$ is the set of blocks $\wt{A}\in\wt{\alpha}$ for which 
$A\subseteq B$ and $r_\oo$ is the set of blocks 
$\wt{A}\in\wt{\alpha}$ for which $A\subseteq z_{\beta}^{-1}(\oo)$.

In particular, every closed interval is a geometric lattice.
\end{theorem}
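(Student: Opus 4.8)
The plan is to prove the three isomorphisms of Theorem \ref{thm:intervals} by exhibiting explicit maps, and to deduce the final "geometric lattice" statement from \eqref{eq:interval} together with the known structure of partition and Dowling lattices. The starting observation is that every closed interval $[(\wt\alpha,z_\alpha),(\wt\beta,z_\beta)]$ decomposes: the covering relations of $\D(G,S)$ are either \textbf{(merge)} moves, which only combine blocks lying inside a common block of $\beta$, or \textbf{(color)} moves, which send a block into the zero block, after which it can only be merged (via a color-then-recolor, i.e. an $S$-coloring adjustment) with other elements of the zero block mapping into the same $G$-orbit under $z_\beta$. So passing from $\wt\alpha$ up to $\wt\beta$ decomposes as a sequence of independent local moves, one package for each block $B\in\beta$ (governed by ordinary set partitions of the $\alpha$-blocks inside $B$, hence a factor $\Q_{p_B}$) and one package for each orbit $\oo\in\O(S)$ (governed by the $\alpha$-blocks whose union is colored into $\oo$, together with their $G$-colorings — but two colorings are identified once they sit in the zero block precisely when they differ by an element of the stabilizer $G_\oo$, which is what produces the factor $\D[r_\oo](G_\oo)$).

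\textbf{Step 1.} Prove \eqref{eq:lowerint}, the principal order ideal below $(\wt\beta,z_\beta)$. An element below $(\wt\beta,z_\beta)$ is obtained by refining: each block $B\in\beta$ is replaced by a set partition of $B$ — giving the factor $\prod_{B\in\beta}\Q_B$ — and the zero block, which carries the $S$-coloring $z_\beta$, is replaced by a partial $G$-partition of $Z$ whose blocks are each assigned $G$-colorings compatible with $z_\beta$. Compatibility with $z_\beta$ forces each refined block $A\subseteq Z$ to lie in a single fiber $z_\beta^{-1}(\oo)$, and the datum of the $G$-coloring modulo what is invisible after pushing into $S$ is exactly a point of the Dowling lattice $\D[z_\beta^{-1}(\oo)](G_\oo)$; here one uses that the stabilizer of the chosen representative $s_\oo$ is $G_\oo$ and that changing representative conjugates $G_\oo$, so the isomorphism type is well-defined. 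I would write down the bijection carefully and check it is order-preserving in both directions; monotonicity is the only thing requiring attention, and it follows by inspecting how \textbf{(merge)} and \textbf{(color)} interact with the product decomposition.

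\textbf{Step 2.} Prove \eqref{eq:upperint}, the principal filter above $(\wt\beta,z_\beta)$. This is easier: above $(\wt\beta,z_\beta)$ one may only merge whole blocks of $\beta$ with each other or color whole blocks of $\beta$ into the zero block (the zero block of $\wt\beta$ and its $S$-coloring are already maximal in the relevant sense and cannot be altered upward). Contracting each block $B\in\beta$ to a single point identifies this filter with $\D[\beta](G,S)$, where $\beta$ is regarded as the finite index set of blocks in the sense of Remark \ref{rmk:sets}; again the verification is that covering relations match up.

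\textbf{Step 3.} Deduce \eqref{eq:interval} by combining the two: the interval $[(\wt\alpha,z_\alpha),(\wt\beta,z_\beta)]$ is the filter of $(\wt\alpha,z_\alpha)$ intersected with the ideal of $(\wt\beta,z_\beta)$. Using \eqref{eq:lowerint} applied to $(\wt\beta,z_\beta)$ and then locating where $(\wt\alpha,z_\alpha)$ sits inside the product, the interval becomes a product of intervals $[\text{point},\text{top}]$ in each factor; in a partition lattice $\Q_B$ the interval above the partition induced by $\wt\alpha$ is again a partition lattice $\Q_{p_B}$ on the set of $\alpha$-blocks in $B$, and similarly the interval above the corresponding element in $\D[z_\beta^{-1}(\oo)](G_\oo)$ is a Dowling lattice $\D[r_\oo](G_\oo)$ by the analogue of \eqref{eq:upperint} for Dowling lattices (Dowling's Theorem 2). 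This is exactly \eqref{eq:interval}. Finally, for the \emph{last statement}: each factor $\Q_{p_B}$ is a partition lattice, hence geometric, and each factor $\D[r_\oo](G_\oo)$ is a Dowling lattice, hence geometric by \cite{Dowling1973}; a finite product of geometric lattices is geometric (the product of matroids is a matroid, or directly: the product of atomistic semimodular lattices is atomistic and semimodular); therefore every closed interval is a geometric lattice, and in particular Cohen--Macaulay with the homology of a wedge of spheres, as recorded in Theorem \ref{thm:intervals1}.

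\textbf{Main obstacle.} The delicate point is Step 1, specifically the bookkeeping around projectivized $G$-colorings in the zero block. One must verify that the identification "$G$-colorings of a zero-block piece mapping into the orbit $\oo$, modulo those that become equal after pushing to $S$" really is the set of partial $G_\oo$-partitions — i.e. that the equivalence is by right-translation by $G_\oo$ and nothing more — and that this is compatible with merging. Getting the stabilizer subgroup (and its conjugacy class, so the isomorphism type is independent of the choice of $s_\oo$) correct, and checking that the \textbf{(color)} covering relations translate precisely into the covering relations of $\D[r_\oo](G_\oo)$, is where the real content lies; the rest is a matter of organizing the combinatorial data.
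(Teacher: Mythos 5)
Your proposal is correct and follows essentially the same route as the paper: the lower interval is analyzed by splitting the refinement data block-by-block and orbit-by-orbit, normalizing the $G$-colorings of zero-block pieces against a chosen orbit representative so that they become $G_\oo$-colorings (the paper does this via explicit transporters $g_t$ with $g_t.s_\oo=t$); the upper interval is obtained by contracting blocks to points; and \eqref{eq:interval} and the geometric-lattice conclusion follow by combining the two. You have also correctly isolated the one delicate point (that the residual equivalence on normalized colorings is right-translation by the stabilizer $G_\oo$ and nothing more), which is exactly where the paper's proof spends its effort.
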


\begin{remark}\label{rmk:intervals}
\begin{enumerate}
\item 
One can write the product decompositions above more explicitly by denoting
$\beta=\{B_1,\dots,B_\ell\}$, $\O(S)=\{\oo_1,\dots,\oo_k\}$, $n_i=|B_i|$, and
$m_j=|z^{-1}(\oo_j)|$. Then Theorem
\ref{thm:intervals}\eqref{eq:lowerint} says:
\[[\zero,(\wt{\beta},z_\beta)] \cong \Q_{\n[n_1]}\times\cdots\times
\Q_{\n[n_\ell]}\times\D[m_1](G_{\oo_1})\times\cdots\times\D[m_k](G_{\oo_k}).\]
\item 
Note that in Theorem \ref{thm:intervals}\eqref{eq:upperint} and
\eqref{eq:interval}, the base sets of the lattices are sets of \emph{blocks}
rather than subsets of $\n$. 
\end{enumerate}
\end{remark}

We will prove parts \eqref{eq:lowerint} and \eqref{eq:upperint} of Theorem
\ref{thm:intervals} below; part \eqref{eq:interval} follows by combining these two.
To qualitatively explain part \eqref{eq:lowerint}, 
recall that the covering relations state that an
element $(\wt{\alpha},z_\alpha)$ lies under $(\wt{\beta},z_\beta)$ if the
partition $\alpha$ is a refinement of the partition $\beta$, possibly excising
blocks away from the zero block $Z_\beta$. In particular, $\alpha$ defines a
partition of each block in $\beta$, and furthermore includes a partial partition
of the zero block. 
To qualitatively explain part \eqref{eq:upperint},
recall that the covering relation allows one to merge existing blocks, or throw
entire blocks into the zero block. Thus an element above $(\wt{\beta},z_\beta)$
is determined by specifying blocks to be merged and the $G$--ratios between their
$G$--colorings, and by coloring the remaining blocks by $S$. 

\begin{proof}[Proof of Theorem \ref{thm:intervals}\eqref{eq:lowerint}]
The isomorphism is not canonical; we make the following choices.
For each $\oo\in\O(S)$, fix a representative $s_\oo\in\oo$ with stabilizer
subgroup $G_\oo$. Then for each element $t\in \oo$ pick a `transporter' $g_t\in G$ so that 
$g_t.s_\oo=t$.

We define a map from $[\zero,(\wt{\beta},z_\beta)]$ to the product by
an assignment \[(\wt{\alpha},z_\alpha)\mapsto
((\alpha_B)_{B\in\beta},(\wt{\alpha}_\oo)_{\oo\in\O(S)}).\]
The first tuple is simple: for a block $B\in \beta$, define a partition of $B$ by
\[\alpha_B:=\{A\in\alpha\ |\ A\subseteq B\}.\]
For the second tuple start with defining for every
$\oo\in\O(S)$ a partial partition of $z_\beta^{-1}(\oo)$ by
\[\alpha_\oo:=\{A\in\alpha\ |\ A\subseteq z_\beta^{-1}(\oo)\}.\]
For each $A\in\alpha_\oo$, the \emph{coloring} relation in the definition of
$\prec$ shows that we may pick a representative $a:A\to G$ such that
$z_\beta(i)=a(i).s_\oo$ for each $i\in A$. 
Since $z_\beta(i)=g_{z_\beta(i)}.s_\oo$, we can define a $G_\oo$--coloring  $a':A\to G_\oo$ with $a'(i)=g^{-1}_{z_\beta(i)}a(i)$.

To describe the inverse map, consider a pair of tuples
$((\alpha_B)_{B\in\beta},(\wt{\alpha}_\oo)_{\oo\in\O(S)})$. 
We recover a partial partition of $\n$ by unioning all of these (partial)
partitions,
\[\alpha:= \{A\in \alpha_B\ |\ B\in\beta\}\cup\{A\in\alpha_\oo\ |\
\oo\in\O(S)\},\]
and obtain $G$-colorings $a:A\to G$ as follows. If $A\in \alpha_B$, then inherit $a=b|_A:A\to G$ from $\wt{B}\in\wt{\beta}$. Otherwise $A\in \alpha_\oo$ is a block of a partial partition. Represent its $G_\oo$-coloring by $a':A\to G_\oo$ and recolor by $a(i)=g_{z_\beta(i)} a'(i)$. Lastly, inherit the $S$-coloring of the zero block $Z_\alpha$ from that of $Z_\beta$. These constructions are clearly inverses.
\end{proof}

\begin{proof}[Proof of Theorem \ref{thm:intervals}\eqref{eq:upperint}]
 This isomorphism is also not canonical:  Let us write 
$\wt{\beta}=\{\wt{B}_1,\dots,\wt{B}_t\}$ and pick a representative
$b_i:B_i\to G$ for each $\wt{B}_i\in\wt{\beta}$.

Then for $(\wt{\alpha},z_\alpha)\geq(\wt{\beta},z_\beta)$ 
we will construct an element of
$\D[t](G,S)$. For $\wt{A}\in\wt{\alpha}$, let
\[C_A:=\{i\in\n[t]\ |\ B_i\subseteq A\}.\]
Now, pick a representative $a:A\to G$ for $\wt{A}\in\wt{\alpha}$. By the
covering relations, we have that for each $i\in C_A$ there is $g_i\in G$ 
such that $a=b_ig_i$. This defines a function $c_A:C_A\to G$; a different
representative map for $\wt{A}$ would define a function in the same equivalence
class of $c_A$. The collection $\{\wt{C}_A\ |\ \wt{A}\in\wt{\alpha}\}$ is a partial
$G$-partition of the set $\n[t]$. 

We also have that $Z_\alpha=Z_\beta\cup Z$ where $Z$ is a union of blocks from
$\beta$ and $z_\alpha|_{Z_\beta}=z_\beta$. 
In fact, the zero block of $\{\wt{C}_A\}$ is $Z'=\{i\in\n[t]\ |\
B_i\subseteq Z\}$. Since $z_\alpha|_{B_i}=f_i\circ b_i$ for some 
(unique) $G$-equivariant 
$f_i:G\to S$, let us color $Z'$ so that $z'(i)=f_i(e)$. Then $(\{\wt{C}_A\ |\
\wt{A}\in\wt{\alpha}\},z')\in \D[t](G,S)$, and one can recover
$(\wt{\alpha},z_\alpha)$ from this data.
\end{proof}

\begin{example}[\textbf{Hexagonal Dowling Poset}]\label{ex:intervalZ6}
Recall from Example \ref{ex:Z6}
the Hexagonal Dowling poset $\D(\Z_6,S)$, where
$S=\{e,z_1,z_2,z_3,w_1,w_2\}$. 
The orbits of $S$ are $\{e\}$, $\{z_1,z_2,z_3\}$, $\{w_1,w_2\}$, so that we have
$\O(S)=\{\oo(e),\oo(z),\oo(w)\}$ with stabilizers $G_e=\Z_6$,
$G_{z_1}=\{\pm1\}\cong\Z_2$, and $G_{w_1}=\{1,\zeta,\zeta^2\}\cong\Z_3$.

One already finds intervals that factor as Dowling lattices of different ranks and of different groups when $n=5$: consider $\parts\in \D[5](\Z_6,S)$ given by
\[
\parts=[\emptyset|| 1_{z_1}2_{z_1}3_{z_2}4_{w_1}5_{w_1}]
\]
then
\begin{align*}
[\zero,\parts] &\cong
 \D[{\{1,2,3\}}](\Z_2)\times
\D[{\{4,5\}}](\Z_3)\\
&\cong\D[3](\Z_2)\times\D[2](\Z_3).
\end{align*}

Next, the element $(\wt{\alpha},z_\alpha)\in[\zero,\parts]$ given by $(\wt{\alpha},z_\alpha) = [ 1_{1}3_{\zeta} || 2_{z_1}4_{w_1}5_{w_1} ]$ is mapped under the isomorphism to
\[
( [1_1 3_1 || 2], [\emptyset || 4,5] ) \in \D[3](\Z_2)\times\D[2](\Z_3).
\]
Moreover, $\D[5](\Z_6,S)_{\geq (\alpha,z_\alpha)} \cong \D[1](G,S)$ and the element $\parts$ is mapped under this isomorphism to $[\emptyset|| 1_{z_1}]$.
\end{example}

\begin{example}[\textbf{Square Dowling poset}]\label{ex:intervalZ4}
Recall from Example \ref{ex:Z4} the square Dowling poset $\D(\Z_4,S)$ where 
$S=\{e,z_1,z_2,t\}$. The orbits of $S$ are $\{e\}$, $\{z_1,z_2\}$, and $\{t\}$, 
so that we have $\O(S) = \{\oo(e),\oo(z),\oo(t)\}$ with stabilizers 
$G_e=G_t=\Z_4$ and $G_{z_1}=\{\pm1\}\cong\Z_2$.

Figure \ref{fig:interval} depicts two intervals inside $\D[2](\Z_4,S)$, where 
\[[\zero,[\emptyset||1_{z_1}2_{z_2}]]\cong \D[2](\Z_2)
\ \ \text{and}\ \ 
[\zero,[\emptyset||1_t2_t]]\cong \D[2](\Z_4)\]

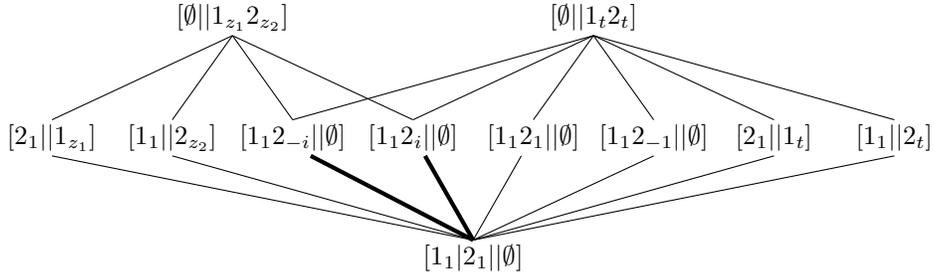
\begin{figure}[htb]
\begin{tikzpicture}[scale=0.8]
\node  at (0,0)  {$[1_1|2_1||\emptyset]$};
\node  at (-7,2) {$[2_1||1_{z_1}]$};
\node  at (-5,2) {$[1_1||2_{z_2}]$};
\node  at (-3,2) {$[1_12_{-i}||\emptyset]$};
\node  at (-1,2) {$[1_12_{i}||\emptyset]$};
\node  at (1,2)  {$[1_12_{1}||\emptyset]$};
\node  at (3,2)  {$[1_12_{-1}||\emptyset]$};
\node  at (5,2)  {$[2_1||1_{t}]$};
\node  at (7,2)  {$[1_1||2_{t}]$};
\node  at (-4,4) {$[\emptyset||1_{z_1}2_{z_2}]$};
\node  at (2,4)  {$[\emptyset||1_{t}2_{t}]$};
\draw[-]
(0,0.3) edge (-7,1.7)
(0,0.3) edge (-5,1.7)
(0,0.3) edge[ultra thick] (-2.7,1.7)
(0,0.3) edge[ultra thick] (-0.8,1.7)
(0,0.3) edge (0.8,1.7)
(0,0.3) edge (3,1.7)
(0,0.3) edge (5,1.7)
(0,0.3) edge (7,1.7)
(-4,3.7) edge (-7,2.3)
(-4,3.7) edge (-5,2.3)
(-4,3.7) edge (-3,2.3)
(-4,3.7) edge (-1,2.3)
(2,3.7) edge (-3,2.3)
(2,3.7) edge (-1,2.3)
(2,3.7) edge (1,2.3)
(2,3.7) edge (3,2.3)
(2,3.7) edge (5,2.3)
(2,3.7) edge (7,2.3);
\end{tikzpicture}
\caption{Two overlapping intervals inside the square Dowling poset
$\D[2](G,S)$ with $G=\Z_4$ acting on $S=\{e,z_1,z_2,t\}$ as in
Example \ref{ex:Z4}. See Example \ref{ex:intervalZ4}.}
\label{fig:interval}
\end{figure}
\end{example}

\subsection{Characteristic polynomial}\label{sec:charpoly}

A fundamental invariant attached to a ranked poset is its \textbf{characteristic polynomial}.
Recall (and see \cite{OS1980}) that when specialized to intersection lattices of hyperplane 
arrangements this polynomial gives a close relative of the Poincar\'{e} polynomial of the 
complement, and that in the further special case of graphical arrangements it computes the 
chromatic polynomial of the graph.
The roots of this polynomial carry subtle information, e.g. for reflection arrangements it 
encodes the exponents of the Coxeter group (see \cite{Brieskorn1973}). In Theorem 
\ref{thm:charpoly} below we factorize the polynomial associated with $\D(G,S)$.

The factorization
formula in Theorem \ref{thm:charpoly} specializes to that
computed by Dowling \cite[Thm.~5]{Dowling1973} when $|S|=1$ and by Ardila,
Castillo, and Henley \cite[Thm.~1.18]{ACH2015} when $G=\Z_2$ and $|S|=2$.
The same formula for the partition lattice is well-known and goes back to Arnol'd 
\cite{Arnold1969} and Stanley \cite{Stanley1972}. This is the special case when
$G$ is trivial and $S$ is empty.

Recall that the characteristic polynomial of a ranked poset $P$ with minimum
element $\zero$ is defined by
\[\chi(P;t) = \sum_{x\in P}\mu(\zero,x)t^{\rk(P)-\rk(x)},\]
where $\mu$ is the M\"obius function. In \cite{Stanley1972} Stanley defined the notion of a supersolvable lattice, encompassing the cases of partition and Dowling lattices.
There he showed that for such lattices, a partition of the atoms gives a factorization of their characteristic polynomial.
We thus proceed by describing the atoms, generalizing Corollaries 1.1 and 1.2 of \cite{Dowling1973}.

\begin{lemma}\label{lem:atoms}
The rank-one elements (or atoms) of $\D(G,S)$ are 
\begin{enumerate}
\item 
$\alpha_{ij}(g):=[i_1 j_g|1|2|\ldots|\hat{i}|\ldots|\hat{j}|\ldots|n||\emptyset]$ where $1\leq i<j\leq n$ and $g\in G$, corresponding to 
the  
$G$-partition
whose only non-singleton block is 
$\{i,j\}$ with
$G$-coloring $[1:g]$; and
\item $\alpha_i^s:=[1|\ldots|\hat{i}|\ldots|n||i_{s}]$, where $1\leq i\leq n$ and $s\in S$, corresponding to
the partial $G$-partition with zero block $\{i\}$ colored by $s$ and the rest are singleton blocks.
\end{enumerate}

Moreover, if $\parts\in\D(G,S)$ has rank $n-\ell$, then it is covered by
\[ \ell|S| + \binom{\ell}{2}|G| \]
elements of rank $n-\ell+1$. 
\end{lemma}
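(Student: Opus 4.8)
The plan is to establish the atom classification (parts (1) and (2) of the lemma) directly from the covering relations, and then deduce the ``Moreover'' count from it together with the description of upper intervals in Theorem~\ref{thm:intervals}\eqref{eq:upperint}. Since atoms are a purely local notion, I would treat them first.

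First I would classify the atoms. By definition an atom is an element of rank one, i.e.\ an element covering the minimum $\zero = [1|2|\ldots|n||\emptyset]$; since each of the two covering relations raises the rank by exactly one, the atoms are exactly the covers of $\zero$. As the only blocks of $\zero$ are singletons and its zero block is empty, a \textbf{(merge)} cover of $\zero$ merges two singletons $\{i\},\{j\}$, and a \textbf{(color)} cover moves a singleton $\{i\}$ into the zero block. The projectivized $G$-coloring of a singleton block is unique, so one may normalize the representative colorings of $\{i\}$ and $\{j\}$ to the value $e$; then the merged block $\{i,j\}$ carries coloring $[1:g]$ for some $g\in G$, and a singleton $\{i\}$ thrown into the zero block is colored by some $s\in S$ (recall a $G$-equivariant $f\colon G\to S$ is determined by $f(e)\in S$, so with $b(i)=e$ the color is simply $f(e)=s$). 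This produces precisely the elements $\alpha_{ij}(g)$ and $\alpha_i^s$ of the statement, with no others. To count them, I would verify three non-redundancy points: (a) if $\alpha_{ij}(g) = \alpha_{ij}(g')$ then the colorings $i\mapsto 1,\, j\mapsto g$ and $i\mapsto 1,\, j\mapsto g'$ of $\{i,j\}$ differ by right translation by some $h\in G$ --- evaluating at $i$ forces $h=e$, then evaluating at $j$ forces $g=g'$; (b) $\alpha_i^s = \alpha_i^{s'}$ forces $s=s'$, and distinct index pairs (resp.\ distinct indices) give distinct underlying partial partitions; (c) a merge-atom has empty zero block whereas a color-atom does not, so the two families are disjoint. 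Hence $\D(G,S)$ has exactly $\binom{n}{2}|G|$ atoms of type (1) and $n|S|$ of type (2).

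The ``Moreover'' statement then follows. For $\parts\in\D(G,S)$ of rank $n-\ell$, the partition $\beta$ has $\ell$ blocks, and Theorem~\ref{thm:intervals}\eqref{eq:upperint} gives a poset isomorphism $\D(G,S)_{\geq\parts}\cong\D[\beta](G,S)$ carrying $\parts$ to the minimum element. Since $\D(G,S)_{\geq\parts}$ is an order filter, the covers of $\parts$ in $\D(G,S)$ coincide with its covers inside $\D(G,S)_{\geq\parts}$; a poset isomorphism preserves covering relations, so these are in bijection with the atoms of $\D[\beta](G,S)$. Applying the atom count above to the $\ell$-element ground set $\beta$ (legitimate by Remark~\ref{rmk:sets}) gives $\binom{\ell}{2}|G| + \ell|S|$ covers, all of rank $n-\ell+1$. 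One could equally count directly on $\D(G,S)$: every cover of $\parts$ is a \textbf{(merge)} of one of the $\binom{\ell}{2}$ pairs of blocks of $\beta$ --- contributing $|G|$ inequivalent colorings of the merged block, independently of the chosen block representatives, by the argument in (a) --- or a \textbf{(color)} of one of the $\ell$ blocks --- contributing $|S|$ colors --- with no over-counting by (b) and (c).

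I expect the only genuine obstacle to be the bookkeeping in (a)--(c): one must check that the equivalence $b\sim bg$ defining projectivized colorings neither collapses distinct values of the new parameter ($g$ in a merge, $s$ in a coloring) nor depends on the auxiliary choices of block representatives. Everything else is formal, using only that the partial order is generated by the (merge) and (color) relations and that each increases rank by exactly one, so that the covers of $\parts$ are exactly the elements of rank $n-\ell+1$ above it.
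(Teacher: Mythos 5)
Your proposal is correct and follows essentially the same route as the paper: read off the atoms directly from the two covering relations applied to $\zero$, then count the covers of a rank-$(n-\ell)$ element by identifying them with the atoms of $\D[\ell](G,S)$ via the upper-interval isomorphism of Theorem~\ref{thm:intervals}\eqref{eq:upperint}. The non-redundancy checks (a)--(c) and the alternative direct count are extra detail the paper leaves implicit, but they are accurate.
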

\begin{proof}
The description of atoms follows directly from the covering relations. 
Thus, the poset has $n|S|+\binom{n}{2}|G|$ atoms. To count the number of
elements covering some $(\wt{\beta},z)$ of rank $n-\ell$, 
we recognize them as atoms of
$\D[\ell](G,S)$ via Theorem \ref{thm:intervals}\eqref{eq:upperint}.
\end{proof}

\begin{theorem}[\textbf{Characteristic polynomial}]\label{thm:charpoly}
If $S$ is a nonempty finite set, then 
\[\chi(\D(G,S);t) = \prod_{i=0}^{n-1} (t-|S|-|G|i). \]

If $S=\emptyset$, then
\[\chi(\D(G,\emptyset);t) = \prod_{i=1}^{n-1} (t-|G|i). \]
\end{theorem}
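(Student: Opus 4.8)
The plan is to apply Stanley's theorem on supersolvable lattices, but since $\D(G,S)$ is not a lattice when $|S|>1$, I first need a workaround. The natural move is to observe that the characteristic polynomial only depends on the M\"obius function values $\mu(\zero,\beta)$, and to compute these via a ``modular coatom'' style recursion. Concretely, I would first establish the recursion
\[
\chi(\D(G,S);t) = (t-|S|-|G|(n-1))\cdot \chi(\D[n-1](G,S);t),
\]
and then induct on $n$, with base case $n=1$ giving $\chi = t-|S|$ directly from the two-element poset $\{\zero < [1_s]\}_{s\in S}$ (so $\mu(\zero,[1_s]) = -1$ and there are $|S|$ atoms, giving $\chi = t - |S|$).

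To prove the recursion I would use the fact (from Theorem~\ref{thm:intervals}\eqref{eq:upperint}) that every upper interval $\D(G,S)_{\geq(\wt\beta,z_\beta)}$ is isomorphic to $\D[\beta](G,S)$, together with Lemma~\ref{lem:atoms} which counts covers. The cleanest approach: since the characteristic polynomial of a ranked poset with $\zero$ can be computed by inclusion-exclusion over the atoms provided the poset behaves like a geometric lattice locally, I would instead use the \emph{Whitney number} description. Actually, the most robust route is the following. Define the one-variable ``deletion'' by restricting attention to elements whose associated partition $\beta$ does not involve element $n$ in a nontrivial way; more precisely, partition $\D(G,S)$ according to the block/zero-block status of the element $n\in\n$. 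An element $\parts$ either has $n$ in a singleton non-zero block, or has $n$ in a larger block, or has $n$ in the zero block. This decomposition, combined with the functoriality maps of Proposition~\ref{prop:functoriality} (the inclusion $\D[n-1](G,S)\into\D(G,S)$ via padding by a singleton, and the projection obtained by deleting $n$), should yield the factor $(t-|S|-|G|(n-1))$: the quantity $|S| + |G|(n-1)$ is exactly the number of atoms of $\D(G,S)$ that are ``not supported away from $n$'' — namely the $|S|$ atoms $\alpha_n^s$ and the $|G|(n-1)$ atoms $\alpha_{in}(g)$ — matching the shape of Stanley's supersolvable factorization where each linear factor $t - a_k$ has $a_k$ equal to the number of atoms below a given coatom in a maximal chain of modular elements.

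The rigorous mechanism I would actually invoke: the interval $[\zero,(\wt\beta,z_\beta)]$ is a geometric lattice by Theorem~\ref{thm:intervals}, so its characteristic polynomial factors and its M\"obius function has a sign-alternation and product structure. Summing $\sum_\beta \mu(\zero,\beta)t^{n-\rk(\beta)}$ can then be reorganized by first fixing where element $n$ goes. Writing $P = \D(G,S)$ and $P' \cong \D[n-1](G,S)$ for the subposet of elements in which $n$ is a singleton non-zero block, one checks that $P'$ is a ``lower'' sub-semilattice and that every $\beta \in P\setminus P'$ lies above a \emph{unique} minimal ``$n$-active'' atom after quotienting appropriately — this is where the argument needs care. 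I would then get $\chi(P;t) = \sum_{\gamma \in P'}\mu(\zero,\gamma)\big(t^{n-\rk\gamma} - (\text{correction})\big)$ and verify the correction sums to $(|S|+|G|(n-1))\,\chi(P';t)\cdot t^{-1}$-shifted appropriately, delivering the factor. The $S = \emptyset$ case is handled identically, except the base case is $n=1$ with $\D(G,\emptyset)$ a single point (empty product, $\chi = 1$... so really one starts the induction at $n=2$ or notes there are no ``color'' atoms $\alpha_i^s$, only the $|G|(n-1)$ merge atoms involving $n$), yielding $\prod_{i=1}^{n-1}(t - |G|i)$.

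The main obstacle is making the ``$n$-activity'' decomposition precise despite $\D(G,S)$ failing to be a lattice: in a supersolvable lattice one uses the existence of a chain of modular elements, and here I must replace that global structure by a careful bijective/recursive argument using only the local lattice structure of intervals (Theorem~\ref{thm:intervals}) and the atom count (Lemma~\ref{lem:atoms}). An alternative that sidesteps this entirely — and which I would fall back on if the direct recursion gets messy — is to compute $\chi(\D(G,S);t)$ via its combinatorial interpretation as counting (for prime-power $t = q$) certain ``colorings'' or points, i.e. exhibit $\D(G,S)$ as the intersection poset of a genuine arrangement (which Theorem~\ref{thm:posetisom} provides when such an $X$ exists, e.g. $X = \mathbb{A}^1$ with suitable $G$-action) and then use Theorem~\ref{thm:cut-paste}/the finite-field method: $\chi(\D(G,S);q) = |\Conf_n^G(X\setminus S)(\mathbb{F}_q)| = \prod_{i=0}^{n-1}(q - |S| - |G|i)$, which holds for infinitely many $q$ and hence as polynomials. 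However, since this paper develops the combinatorics \emph{before} the topology, I would present the self-contained inductive proof as primary.
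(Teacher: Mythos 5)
Your overall strategy---peel off one linear factor at a time, with the factor $t-|S|-|G|(n-1)$ accounted for by the $|S|+|G|(n-1)$ atoms that involve the element $n$---is the right idea and is in fact very close in spirit to the paper's proof, which partitions the atoms into sets $A_k=\{\alpha_k^s\mid s\in S\}\cup\{\alpha_{ik}(g)\mid g\in G,\ i<k\}$ (atoms whose largest index is $k$) and then invokes the Hallam--Sagan ``complete transversal function'' theorem to conclude that $\chi$ factors with roots $|A_k|$. But as written your argument has a genuine gap at exactly the point you flag. The recursion $\chi(\D(G,S);t)=(t-|S|-|G|(n-1))\,\chi(\D[n-1](G,S);t)$ is never actually derived: the decomposition by the status of $n$ handles the ``$n$ is a singleton block'' stratum cleanly (contributing $t\cdot\chi(\D[n-1](G,S);t)$), but for the remaining strata the M\"obius values $\mu(\zero,\beta)$ are not simply proportional to $\mu(\zero,\gamma)$ for the element $\gamma$ obtained by deleting $n$, and the number of ways to re-insert $n$ into $\gamma$ is $\ell(\gamma)|G|+|S|$, which depends on $\gamma$, not the constant $(n-1)|G|+|S|$. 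Moreover the uniqueness claim you lean on is false: already for $n=2$, the element $[\emptyset||1_s2_{g.s}]$ lies above the atom $\alpha_2^{g.s}$ \emph{and} above $\alpha_{12}(h)$ for every $h\in gG_s$, so elements do not sit above a unique ``$n$-active'' atom. (The condition the paper actually verifies is weaker and is what saves the day: uniqueness is required only for the \emph{minimal} index $k$ with $A_k\cap[\zero,\beta]\neq\emptyset$.) Some genuine substitute for modularity---Hallam--Sagan, a broken-circuit/NBC argument, or an explicit Whitney-number computation from the interval decomposition of Theorem \ref{thm:intervals}---is needed, and the proposal does not supply one.

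Your fallback via the finite-field method does not rescue the argument either, for two reasons. First, it is circular in the logic of this paper: Theorem \ref{thm:motive} (the point count) is \emph{deduced from} Theorem \ref{thm:charpoly} via the identity $[\M(G,X)]=\chi(\D(G,S),[X])$, so it cannot be used as input. Second, it is not general enough: not every pair $(G,S)$ arises as the singular data of a $G$-action on a variety---Remark \ref{rmk:realizable} shows the stabilizers $G_s$ would have to be cyclic---so a realization $X=\AA^1$ (which forces $|S|\le 1$ and $G$ cyclic) covers only a small family of cases. The clean fix is to adopt the atom-partition route directly: verify the two Hallam--Sagan conditions for the sets $A_1,\dots,A_n$ (well-definedness of joins across distinct blocks, rank additivity, and uniqueness at the minimal active index) and read off the roots $|A_k|=|S|+|G|(k-1)$.
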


\begin{proof}
The authors thank Emanuele Delucchi for suggesting this method of proof.

First assume that $S$ is nonempty.
For each $k\in\n$, define
\[A_k:=\{\alpha_k^s\ |\ s\in S\} \cup \{\alpha_{ik}(g)\ |\ g\in G, i< k\}.\]
The sets $A_1,\dots,A_n$ give a partition of the atoms of $\D(G,S)$. Further
define $\hat{A}_{k}:=A_{k}\cup\{\zero\}$, a subposet of $\D(G,S)$.

By choice of the partition, least upper bounds of elements from distinct blocks
exist, and so there is a well-defined map 
$f:\hat{A}_1\times\cdots\times \hat{A}_n\to \D(G,S)$ 
defined by $f(x_1,\dots,x_n)=x_1\vee\cdots \vee x_n$.
It is easy to verify that this is a complete transversal function in the sense
of \cite[Def.~4.2]{Hallam2014} and that the following properties hold:
\begin{enumerate}
\item For each $(x_1,\dots,x_n)\in \hat{A}_1\times\cdots\times\hat{A}_n$, the
number of $i$ for which $x_i\neq\zero$ is equal to the rank of
$x_1\vee\cdots\vee x_n$.
\item For each $(\wt{\beta},z)\in\D(G,S)$, if $k$ is the minimum such that
$A_k\cap[\zero,(\wt{\beta},z)]\neq\emptyset$, then 
$|A_k\cap[\zero,(\wt{\beta},z)]|=1$.
\end{enumerate}
The formula then follows from Theorem 4.4 of \cite{Hallam2014}, with the roots
of the polynomial given by 
$|A_{k}|=|S|+|G|(k-1)$ for $k=1,2,\dots,n$.
Finally, if $S=\emptyset$, then the sets $A_2,\dots,A_n$ give the necessary
partition and roots $|G|(k-1)$ for $k=2,\dots,n$. 
\end{proof}

\begin{remark}
It is interesting to note that the characteristic polynomial does not depend on
the action of $G$ on $S$, but rather only on $n$, $|G|$, and $|S|$. For
example, the posets $\D[2](\Z_2,\{\pm1\})$ where $\Z_2$ acts either trivially
or nontrivially on $\{\pm1\}$, depicted in Figures \ref{fig:DZ2-toric} and
\ref{fig:Z2}, have the same characteristic polynomial.
\end{remark}

\begin{example}[\textbf{Hexagonal Dowling poset}]\label{ex:Z6charpoly}
If $G=\Z_6$ and $|S|=6$ (eg. in Example \ref{ex:Z6}),
the first few characteristic polynomials are
\begin{align*}
\chi(\D[2](G,S);t) &= (t-6)(t-12)\\ &= t^2-18t+72\\
\chi(\D[3](G,S);t) &= (t-6)(t-12)(t-18)\\ &= t^3-36t^2+396t-1296\\
\chi(\D[4](G,S);t) &= (t-6)(t-12)(t-18)(t-24)\\ &=
t^4-60t^3+1260t^2-10800t+31104
\end{align*}
\end{example}

\begin{example}[\textbf{Square Dowling poset}]\label{ex:Z4charpoly}
If $G=\Z_4$ and $|S|=4$ (eg. in Example \ref{ex:Z4}), 
the first few characteristic polynomials are 
\begin{align*}
\chi(\D[2](G,S);t) &= (t-4)(t-8)\\ &= t^2-12t+32\\
\chi(\D[3](G,S);t) &= (t-4)(t-8)(t-12)\\ &= t^3-24t^2+176t-384\\
\chi(\D[4](G,S);t) &= (t-4)(t-8)(t-12)(t-16)\\ &= t^4-40t^3+560t^2-3200t+6144
\end{align*}
\end{example}

\subsection{Orbits and labeled partitions}\label{sec:orbits}

A set partition of $\n$ determines a partition of the number $n$ by taking the size of its
blocks. The action
of $S_n$ on the partition lattice $\Q_{\n}$ preserves this data, and in fact the
$S_n$-orbits of $\Q_{\n}$ are in bijection with partitions of $n$. 
The action of $\G$ on the Dowling lattice $\D(G)$ behaves similarly, 
except that the zero block cannot be permuted among the other blocks. 
The orbit can then be recorded by a partition of $n$
with a distinguished part denoting the size of the zero block.

Let $\parts\in\D(G,S)$, and let $\O:=\O(S)$. 
The action of $\G$ preserves the block sizes in the partial $G$-partition
$\beta$, and cannot permute the sets $z^{-1}(\oo)$ amongst each other or with
blocks in $\beta$. In particular, the sizes of the sets $z^{-1}(\oo)$ are also preserved. 
This data can be recorded by a partition of $n$, where some parts are
labeled by elements of $\O$, recording the sizes of $z^{-1}(\oo)$ for $\oo\in \O$. This notion is made precise by the following definition.

\begin{definition}[\textbf{Labeled partitions}]\label{def:partitions}
An \textbf{$\O$-labeled partition of $n$} is an integer partition of $n$, i.e. a collection of positive integers summing to $n$, with some parts
colored by $\O$ so that each color is used at most once.
For $\O=\{\oo_1,\dots,\oo_s\}$ and an $\O$-labeled 
partition $\lambda$, we will use the notation 
\[\lambda=(\lambda_1,\dots,\lambda_\ell||\lambda_{\oo_1},\dots,\lambda_{\oo_s})\]
where $\lambda_1\geq\cdots\geq\lambda_\ell>0$ are the uncolored (or unlabeled) parts,
$\lambda_\O=(\lambda_{\oo_1},\dots,\lambda_{\oo_s})$ is the
colored (or labeled) portion and $\sum\lambda_i+\sum\lambda_\oo = n$. We typically omit the labeled parts which are zero.

Let $\Q_n(\O)$ denote the set of all $\O$-labeled partitions of $n$.
\end{definition}

The following theorem exhibits $\Q_n(\O)$ as
the quotient of $\D(G,S)$ by the action of $\G$.
Through this quotient, $\Q_n(\O)$ acquires an induced ordering and grading with $\rk(\lambda)=n-\ell$.

\begin{theorem}[\textbf{Orbits}]\label{thm:orbits}
Let $S$ be a finite $G$--set with orbit set $\O:=\O(S)$.
Then the $\G$--orbits of $\D(G,S)$ are in bijection with 
the $\O$--labeled partitions of $n$. More explicitly, the fibers
of the surjective map $\pi_n:\D(G,S)\to\Q_n(\O)$, defined below, are $\G$--orbits.
\end{theorem}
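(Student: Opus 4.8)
The plan is to define the map $\pi_n : \D(G,S) \to \Q_n(\O)$ explicitly, check it is well-defined and $\G$-invariant, and then show its fibers are exactly the $\G$-orbits. First I would define $\pi_n\parts$ by recording the block sizes: given $\parts\in\D(G,S)$ with $\beta = \{B_1,\dots,B_\ell\}$ and zero block $Z$, the unlabeled parts of $\pi_n\parts$ are the multiset $\{|B_1|,\dots,|B_\ell|\}$, and for each orbit $\oo\in\O$ the $\oo$-labeled part is $|z^{-1}(\oo)|$ (omitted when this is $0$). The identity $\sum |B_i| + \sum_\oo |z^{-1}(\oo)| = n$ shows this lands in $\Q_n(\O)$, and surjectivity is immediate: given any $\O$-labeled partition, build a partial $G$-partition with blocks of the prescribed sizes (using the trivial coloring) and color the zero block so that $z^{-1}(\oo)$ has the prescribed size (possible since each $\oo\neq\emptyset$).

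Next I would verify $\G$-invariance. Examining the $\G$-action recalled before the theorem: $\sigma$ permutes the blocks of $\beta$ and the zero block as sets, so it preserves the multiset of block sizes; and because $z'(\sigma(j)) = g_j.z(j)$, an element of $z^{-1}(\oo)$ is sent into $(z')^{-1}(\oo)$ (the $G$-action permutes within each orbit $\oo$), so $|z^{-1}(\oo)| = |(z')^{-1}(\oo)|$. Hence $\pi_n(g.\parts) = \pi_n\parts$, so $\pi_n$ is constant on $\G$-orbits, and it remains only to show that two elements with the same image lie in a common orbit.

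For the converse, suppose $\pi_n(\wt{\alpha},z_\alpha) = \pi_n\parts$. Since the unlabeled parts agree as multisets, choose a bijection $\sigma_0$ of $\n$ carrying the blocks of $\alpha$ to the blocks of $\beta$ size-for-size, and carrying $z_\alpha^{-1}(\oo)$ onto $z_\beta^{-1}(\oo)$ for each $\oo$ (possible since these have equal size); this handles the underlying (partial) set partition and the orbit-data of the zero block. It remains to fix up the $G$-colorings: on each block $B\in\beta$, pick the $G$-component $g_j$ to transport the (representative of the) coloring pushed forward from $\alpha$ to the chosen representative of $\wt{B}$ — this is possible precisely because projectivized colorings on a block form a single $G$-orbit under postmultiplication. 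On the zero block, within each $z_\beta^{-1}(\oo)$ the values of $z_\beta$ all lie in the orbit $\oo$, so for each $j$ we may choose $g_j\in G$ with $g_j.z_\alpha(j) = z_\beta(\sigma_0(j))$; since $\oo$ is a transitive $G$-set such $g_j$ exists. Assembling $g = (g_1,\dots,g_n,\sigma_0)\in\G$ then gives $g.(\wt{\alpha},z_\alpha) = \parts$.

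The main obstacle is the bookkeeping in the last step: one must choose the permutation $\sigma_0$ and the group elements $g_j$ consistently, and verify that the resulting $g$ genuinely sends $(\wt{\alpha},z_\alpha)$ to $\parts$ on the nose (not merely to something with the same image). The key structural input that makes this go through is that a projectivized $G$-coloring of a block is, by definition, a full $G$-orbit of honest colorings, and that each $S$-orbit $\oo$ is a transitive $G$-set — so the obstruction to matching colorings always vanishes. No genuinely hard combinatorics is involved; the content is organizing these choices block by block (and orbit by orbit on the zero block) and invoking these two transitivity facts.
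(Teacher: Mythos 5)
Your proposal is correct and follows essentially the same route as the paper: define $\pi_n$ by block sizes and the sizes of $z^{-1}(\oo)$, check invariance and surjectivity, and then build a transporting element $(g_1,\dots,g_n,\sigma)\in\G$ using a size-matching permutation together with the freedom to choose each $g_j$ (the paper transports via a canonical representative of each fiber rather than between two arbitrary elements, but the argument is the same).
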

\begin{proof}
Write $\O(S) = \{\oo_1,\dots,\oo_k\}$.
For $(\wt{\beta},z)$ with
$\beta=\{B_1,\dots,B_\ell\}$ written so that $|B_1|\geq \cdots\geq |B_\ell|>0$,
define \[ \pi_n(\wt{\beta},z) := 
(|B_1|,\dots,|B_\ell|\;||\;|z^{-1}(\oo_1)|_{\oo_1},\dots,|z^{-1}(\oo_k)|_{\oo_k}).\]
As discussed in the beginning of this subsection,
the group action preserves the list of cardinalities,
thus $\pi_n(\parts)=\pi_n(w.\parts)$ for all $w\in \G$ and $\pi_n$
descends to a well-defined map on orbits. It remains to show that $\pi_n$ is bijective.

Indeed, given a labeled partition 
$\lambda=(\lambda_1,\ldots,\lambda_\ell || \lambda_{\oo_1}\,\ldots,\lambda_{\oo_k})$
there exists a partition of $\n$ with blocks $(B_1,\ldots, B_\ell, Z_1,\ldots,Z_k)$
of respective sizes $|B_i| = \lambda_i$ and $|Z_j|=\lambda_{\oo_j}$.
This will give a partial $G$-partition with zero block $Z=\cup Z_j$ once we assign colorings:
for every $1\leq i \leq \ell$ color $B_i$ with the constant function $b_i\equiv 1\in G$;
and for the zero block, pick orbit representatives $s_j\in \oo_j$ and define $z:Z\to S$
by mapping $Z_j$ to $s_j$ for every $1\leq j\leq k$.
The resulting partial $G$-partition $(\wt{\beta}_\lambda,z_\lambda)$ maps to $\lambda$ under $\pi_n$.

To see that $\pi_n$ is injective, suppose $(\wt{\alpha},z_\alpha)$ also maps to $\lambda$.
We construct an element $(g,\sigma)\in \G$ transporting
$(\wt{\beta}_\lambda,z_\lambda)\mapsto(\wt{\alpha},z_\alpha)$.
There exists some permutation $\sigma\in \sym$ such that $\sigma.B_i = A_i$ for all $i$ and
$\sigma.z_{\beta}^{-1}(\oo_j) = z_{\alpha}^{-1}(\oo_j)$ for all $j$, as these sets
have equal size respectively. Next, fix representative colorings $a_i:A_i\to G$
for $\wt{A}_i$. Define $g = (g_1,\ldots,g_n)\in G^n$ as follows: if $r\in B_i$ take
$g_r= a_i(\sigma(r))$; otherwise $r\in Z_j$ so $z_{\lambda}(r),z_{\alpha}(\sigma(r))$ belong
to the same orbit $\oo_j$ and we take $g_r$ to be such that
$g_r.z_\lambda(r) = z_\alpha(\sigma(r))$.
\end{proof}

\begin{example}[\textbf{Type C Dowling poset}]\label{ex:typeCorbits}
Recall from Example \ref{ex:typeC} the poset $\D(G,S)$ where $G=\Z_2$ acts on
$S$ trivially. The set of orbits, viewed as a quotient of $\D[2](G,S)$ by the
action of $\G[2]$, is depicted in Figure \ref{fig:OZ2} for $S=\{\pm1\}$ 
(the figure is also labeled by objects that will be explained in Example \ref{ex:typeCP}).

\begin{figure}[hbt]
\begin{tikzpicture}[scale=.75]
\foreach \x/\y in
{-0.5/-3,-6.5/1,-4.9/1,-1.3/1,0.3/1,3.9/1,5.5/1,-5.7/5,-1.3/5,0.3/5,4.7/5}
{
\draw[dashed,-]
(\x,\y)--(\x,\y+1)--(\x+1,\y+1)--(\x+1,\y)--(\x,\y)--(\x+1,\y+1)--(\x+1,\y)--(\x,\y+1);
\draw[dashed,-] (\x+0.5,\y)--(\x+0.5,\y+1);
\draw[dashed,-] (\x,\y+0.5)--(\x+1,\y+.5);
}
\foreach \x in {-5.2,0,5.2}
{
\draw[-] (\x,0.8)--(0,-1);
}
\foreach \x in {-5.2,0}
{
\draw[-] (-5.2,4.8)--(\x,3);
\draw[-] (5.2,4.8)--(-\x,3);
}
\foreach \x in {-5.2,5.2}
{
\draw[-] (0,4.8)--(\x,3);
}
\draw[ultra thick,cyan,-] (-6.5,1)--(-6.5,2);
\draw[ultra thick,cyan,-] (-4.9,1)--(-3.9,1);
\draw[ultra thick,green,-] (-1.3,1)--(-0.3,2);
\draw[ultra thick,green,-] (0.3,2)--(1.3,1);
\draw[ultra thick,blue,-] (3.9,1.5)--(4.9,1.5);
\draw[ultra thick,blue,-] (6,1)--(6,2);
\draw[fill,orange] (-5.7,5) circle (2pt);
\draw[fill,red] (-1.3,5.5) circle (2pt);
\draw[fill,red] (0.8,5) circle (2pt);
\draw[fill,magenta] (5.2,5.5) circle (2pt);
\fill[nearly transparent] (-0.5,-3) rectangle (0.5,-2);
\node at (0,-1.5) {$(1,1\,||\,0)$};
\node at (-5.2,2.5) {$(1\,||\,1_1)$};
\node at (0,2.5) {$(2\,||\,0)$};
\node at (5.2,2.5) {$(1\,||\,1_{-1})$};
\node at (-5.2,6.5) {$(0\,||\,2_1)$};
\node at (0,6.5) {$(0\,||\,1_1,1_{-1})$};
\node at (5.2,6.5) {$(0\,||\,2_{-1})$};
\end{tikzpicture}
\caption{The labeled partitions are the $\sym[2][\Z_2]$-orbits of the type C toric Dowling poset $\D[2](\Z_2,\{\pm1\})$ from Figure \ref{fig:DZ2-toric} and Examples \ref{ex:typeC}, \ref{ex:typeCorbits}. This figure will be further explained in Example \ref{ex:typeCP} and Figure \ref{fig:PZ2-toric}.}
\label{fig:OZ2}
\end{figure}

\end{example}

\begin{example}[\textbf{Hexagonal Dowling poset}]\label{ex:Z6orbits}
Recall the poset $\D(G,S)$ from Example \ref{ex:Z6} where $G=\Z_6$ acts on
$S=\{e,z_1,z_2,z_3,w_1,w_2\}$. 
Since $\O(S)=\{\oo(e),\oo(z),\oo(w)\}$, the orbits are in bijection with 
$\O$--labeled partitions of $n$ where $\O$ is a 3-element set.
The set of orbits when $n=2$, viewed as a quotient of
$\D[2](G,S)$ by the action of $\G[2]$, are depicted in Figure \ref{fig:OZ6-OZ4}.

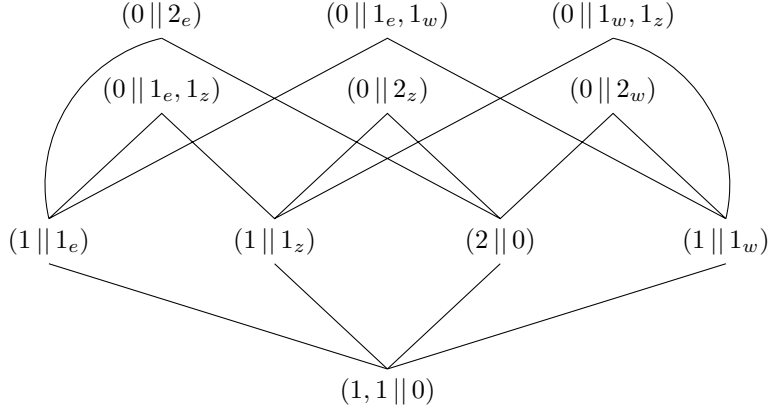
\begin{figure}[htb]
\begin{tikzpicture}
\node at (0,-1.5) {$(1,1\,||\,0)$};
\node at (-4.5,0.5) {$(1\,||\,1_e)$};
\node at (-1.5,0.5) {$(1\,||\,1_z)$};
\node at (1.5,0.5) {$(2\,||\,0)$};
\node at (4.5,0.5) {$(1\,||\,1_w)$};
\node at (0,2.5) {$(0\,||\,2_z)$};
\node at (0,3.5) {$(0\,||\,1_e,1_w)$};
\node at (-3,2.5) {$(0\,||\,1_e,1_z)$};
\node at (-3,3.5) {$(0\,||\,2_e)$};
\node at (3,2.5) {$(0\,||\,2_w)$};
\node at (3,3.5) {$(0\,||\,1_w,1_z)$};
\foreach \x in {-4.5,-1.5,1.5,4.5} {\draw[-] (\x,0.2)--(0,-1.2);}
\draw[-] 
(-4.5,0.8)--(-3,2.2)--(-1.5,0.8)--(0,2.2)--(1.5,0.8)--(3,2.2)--(4.5,0.8)
(-4.5,0.8)edge[bend left=45](-3,3.2)
(4.5,0.8)edge[bend right=45](3,3.2)
(-4.5,0.8)--(0,3.2)--(4.5,0.8)
(-3,3.2)--(1.5,0.8)
(3,3.2)--(-1.5,0.8);
\end{tikzpicture}
\caption{Hasse diagram for the quotient of the hexagonal Dowling poset 
$\D[2](G,S)$ by $\G[2]$, where $G=\Z_6$ and $\O(S)=\{e,z,w\}$.
If we replace all of the $w$'s with $t$'s, this 
is \textit{also} the Hasse diagram for the quotient of the square
Dowling poset $\D[2](G,S)$ by $\G[2]$, where $G=\Z_4$ and $\O(S)=\{e,z,t\}$.
See Examples \ref{ex:Z6}, \ref{ex:Z4}, \ref{ex:Z6orbits}, \ref{ex:Z4orbits},
\ref{ex:Z62}, and \ref{ex:Z42}.}
\label{fig:OZ6-OZ4}
\end{figure}
\end{example}

\begin{example}[\textbf{Square Dowling poset}]\label{ex:Z4orbits}
Recall the poset $\D(G,S)$ from Example \ref{ex:Z4} where $G=\Z_4$ acts on
$S=\{e,z_1,z_2,t\}$. Here, $\O(S)=\{\oo(e),\oo(z),\oo(t)\}$, and hence the orbits are
in bijection with $\O$--labeled partitions of $n$ for a 3-element set $\O$. This means
that the orbits of the square Dowling poset are in bijection with the orbits of the 
hexagonal Dowling poset, and are thus also depicted in Figure \ref{fig:OZ6-OZ4} for $n=2$.
\end{example}

\subsection{Whitney homology}\label{sec:whitney}

The homology of partition lattices has been studied as a representation of the
symmetric group by Stanley \cite{Stanley1982},
Lehrer and Solomon \cite{LS1986}, Barcelo and Bergeron
\cite{BB1990}, and Wachs \cite{Wachs1998}. 
For Dowling lattices, this has been carried out by Hanlon \cite{Hanlon1984} 
and Gottlieb and Wachs \cite{GW2000}.
Since the partition and Dowling lattices are geometric, it follows from Folkman
\cite{Folkman1966} that they are Cohen Macaulay and hence $\H_i(\D(G))=0$
unless $i=n-2$. 
Dowling \cite{Dowling1973} computed the Whitney numbers for the
lattices $\D(G)$ and shows that 
\[\dim\H_{n-2}(\D(G))=\prod_{j=1}^{n-1}(1+j|G|).\]
 This number can be computed from the characteristic
polynomial in Theorem \ref{thm:charpoly} via its characterization as $(-1)^n\chi(\D(G);0)$.

Since the poset $\D(G,S)$ does not have a unique maximum element when $|S|>1$, it
is interesting to investigate the homology of the poset 
obtained by adding a maximum $\onehat$ of rank $n+1$ to obtain a bounded poset
$\widehat{\D[]}_n(G,S)$.
Delucchi, Girard, and Paolini \cite{DGP2017} prove that
$\widehat{\D[]}_n(\Z_2,S)$ is EL-shellable, hence Cohen-Macaulay, 
when $\Z_2$ acts trivially on $S$. Based on this, we suggest the following.
\begin{conjecture}[\textbf{Shellability}]
\label{conj:shellable}
All $\widehat{\D[]}_n(G,S)$ are shellable. In particular, using Theorem
\ref{thm:charpoly}, the order complex of 
${\D[]}_n(G,S)\setminus\{\zero\}$ 
is homotopy equivalent to a wedge of 
\[\prod_{i=0}^{n-1} (|S|+|G|i-1)\] spheres of dimension $n-1$,
when $S$ is nonempty.
\end{conjecture}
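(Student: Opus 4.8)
The plan is to establish shellability of $\widehat{\D[]}_n(G,S)$ by producing an EL-labeling, building on the two models we already control: the Dowling lattices $\D[m](G)$ (shown EL-shellable by Gottlieb–Wachs) and the partition lattices $\Q_{\n}$. The key structural input is Theorem \ref{thm:intervals}: every closed interval $[\alpha,\beta]$ with $\beta<\onehat$ already factors as a product of partition and Dowling lattices, each of which carries a known EL-labeling. So the only genuinely new intervals are those of the form $[\alpha,\onehat]$, i.e.\ the top intervals obtained after adjoining the artificial maximum. First I would set up an edge-labeling on the covering relations of $\widehat{\D[]}_n(G,S)$: on each cover $\alpha\prec\beta$ with $\beta\neq\onehat$, transport the product EL-labeling coming from Theorem \ref{thm:intervals}\eqref{eq:lowerint} (choosing, once and for all, orbit representatives $s_\oo$ and transporters $g_t$ as in that proof, together with a fixed linear order on $G$, on $S$, and on $\n$ compatible with the standard Dowling/partition labelings). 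On a cover $\beta\prec\onehat$ — which occurs exactly when $\beta$ has corank $1$, i.e.\ $\rk(\beta)=n$, meaning $\beta$ has a single non-zero block or none — I would assign a label larger than all others, or more precisely a label recording which "last move" (merge versus color, and with which color/orbit datum) is still available, ordered so that a specific canonical choice is the smallest.

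The heart of the argument is then the standard EL-shellability check: in every interval $[\alpha,\gamma]$ of $\widehat{\D[]}_n(G,S)$ there is a unique strictly increasing maximal chain, and it is lexicographically first. For $\gamma\neq\onehat$ this is immediate from Theorem \ref{thm:intervals} together with the fact that a product of posets with EL-labelings has an EL-labeling (the product labeling, ordered lexicographically by the factors in a fixed order) — this is a classical fact I would cite rather than reprove. For $\gamma=\onehat$, an increasing maximal chain from $\alpha$ to $\onehat$ consists of an increasing maximal chain inside $[\alpha,\beta]$ for the unique coatom $\beta$ it passes through, followed by the single step $\beta\prec\onehat$; since the label of that last step is forced to be maximal, increasingness of the whole chain is equivalent to increasingness of its restriction to $[\alpha,\beta]$, so uniqueness reduces to: among all coatoms $\beta\geq\alpha$, exactly one admits an increasing saturated chain from $\alpha$ whose labels are lexicographically smaller than those through any other coatom. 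This is where I would have to work: I would argue that the lexicographically-first saturated chain out of $\alpha$ is governed by repeatedly performing the smallest available atom-type move — and by the partition $A_1,\dots,A_n$ of the atoms used in the proof of Theorem \ref{thm:charpoly}, the smallest moves always merge into / color using the lowest-indexed coordinate, which pins down a single coatom as the target. Concretely, the complete-transversal / supersolvable structure from Theorem \ref{thm:charpoly} is exactly the combinatorial skeleton that makes the top interval behave; I would lean on it to identify the canonical coatom and the canonical increasing chain.

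The main obstacle, as I see it, is precisely this last-step analysis: unlike Dowling's case, $\widehat{\D[]}_n(G,S)$ has many coatoms (corresponding to the distinct $S$-colorings of $\n$ and to the single-block configurations), and one must rule out "competing" increasing chains that reach $\onehat$ through different coatoms while being incomparable in the lex order only because of the artificial top label. Making the last label genuinely maximal (rather than merely large) is what saves this, but then one must double-check that increasingness is never broken at the penultimate step for the putative lex-first chain — i.e.\ that the largest label used inside a coatom interval is still smaller than the top label, which holds by construction. A secondary obstacle is bookkeeping: assembling the product EL-labelings of Theorem \ref{thm:intervals} into a single coherent edge-labeling on all of $\widehat{\D[]}_n(G,S)$ requires the labelings on overlapping intervals to agree, which forces the "global" choices (orders on $G$, $S$, $\n$, orbit representatives, transporters) to be fixed uniformly at the outset; this is routine but must be done carefully. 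Once EL-shellability is in hand, the poset is Cohen–Macaulay, its order complex is homotopy equivalent to a wedge of spheres of dimension $n-1$, and the number of spheres equals $|\mu(\zero,\onehat)|$, which by the recursion $\mu(\zero,\onehat)=-\sum_{\beta<\onehat}\mu(\zero,\beta)$ together with the evaluation of the characteristic polynomial at $t=1$ in Theorem \ref{thm:charpoly} gives $\prod_{i=0}^{n-1}(|S|+|G|i-1)$, as claimed. (I note that Paolini \cite{paolini} has since carried this out, resolving the conjecture.)
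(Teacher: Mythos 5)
First, a point of reference: the paper does not prove this statement---it is stated as a conjecture, with a remark that Paolini later resolved it (by establishing EL-shellability, so your overall strategy is at least in the right family). Your proposal must therefore stand on its own, and as written it has a genuine gap at exactly the point where the mathematical content lies: the labeling of the covers $\beta\prec\onehat$ and the verification that each interval $[\alpha,\onehat]$ contains a \emph{unique} increasing maximal chain. Your proposal is internally inconsistent here: you first say the top label should be ``larger than all others,'' and then that it should ``record which last move is still available, ordered so that a specific canonical choice is the smallest.'' These cannot both hold, and each option fails on its own. If every cover $\beta\prec\onehat$ receives the same maximal label, then for each coatom $\beta\geq\alpha$ the unique increasing chain in the geometric lattice $[\alpha,\beta]$ extends to an increasing maximal chain in $[\alpha,\onehat]$; since (for $S\neq\emptyset$) the coatoms of $\widehat{\D[]}_n(G,S)$ are the $|S|^n$ full $S$-colorings of $\n$, the interval $[\zero,\onehat]$ would have $|S|^n$ increasing maximal chains rather than one. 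If instead the top labels vary and are not all larger than the interior labels, then your reduction ``increasingness of the whole chain is equivalent to increasingness of its restriction to $[\alpha,\beta]$'' is no longer valid, and you have not supplied the replacement argument. Appealing to the transversal structure from the proof of Theorem \ref{thm:charpoly} is a reasonable instinct, but that structure controls the M\"obius function, not chain labelings; turning it into a proof of the unique-increasing-chain property for $[\alpha,\onehat]$ is the actual work, and it is absent.

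A secondary, smaller gap: an EL-labeling is a single global assignment of labels to cover relations, and your plan to ``transport'' the product labelings of Theorem \ref{thm:intervals} interval by interval does not obviously produce one, since a given edge $\alpha\prec\beta$ lies in many intervals whose product decompositions (and hence induced labels) depend on choices of orbit representatives, transporters, and block orderings that need not be compatible. Knowing that every proper interval is EL-shellable does not by itself yield EL-shellability (or even shellability) of the whole poset. Also note a minor slip: when $S\neq\emptyset$ the coatoms are the rank-$n$ elements, i.e.\ the total $S$-colorings with no blocks at all; an element with ``a single non-zero block'' has rank $n-1$ and is not a coatom. Your final step---that shellability of the bounded rank-$(n+1)$ poset forces the order complex of the proper part to be a wedge of $(n-1)$-spheres whose number is $|\mu(\zero,\onehat)|=\prod_{i=0}^{n-1}(|S|+|G|i-1)$, computed from $\chi(\D(G,S);1)$---is correct, but it is conditional on the shellability you have not established.
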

\begin{remark}
Since the first draft of this paper, Paolini \cite{paolini} has proven  Conjecture \ref{conj:shellable}.
Here we focused on the case $S\neq\emptyset$ for simplicity; one can similarly count the spheres and their dimension when $S=\emptyset$ from the characteristic polynomial in Theorem \ref{thm:charpoly}.
\end{remark}

In this paper, we are interested in the Whitney homology, defined by
\begin{equation}\label{eq:whitney}
\WH_r(\D(G,S)) := \bigoplus_{\parts\in\D(G,S)} \H_{r-2}(\zero,\parts).
\end{equation}
Since the intervals $[\zero,\parts]$ are products of partition and Dowling
lattices (Theorem \ref{thm:intervals}) hence geometric lattices, we obtain:
\begin{corollary} \label{cor:whitney-homology-ranks}
The group $\H_{r-2}(\zero,\parts)$ is trivial unless $\rk\parts=r$.
Thus, the summation in $\WH_r(\D(G,S))$ includes only the rank--$r$ elements of $\D(G,S)$.
\end{corollary}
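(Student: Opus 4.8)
The statement is Corollary \ref{cor:whitney-homology-ranks}, which asserts that $\H_{r-2}(\zero,\parts)$ vanishes unless $\rk\parts = r$. The strategy is to reduce everything to the well-known vanishing behavior of the (reduced) homology of geometric lattices, using Theorem \ref{thm:intervals} to identify the open interval $(\zero,\parts)$ concretely.

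\emph{Step 1: Identify the interval.} By Theorem \ref{thm:intervals}\eqref{eq:lowerint}, the closed interval $[\zero,\parts]$ is isomorphic to a product
\[ \Q_{\n[n_1]}\times\cdots\times\Q_{\n[n_\ell]}\times \D[m_1](G_{\oo_1})\times\cdots\times\D[m_k](G_{\oo_k}), \]
each factor of which is a geometric lattice (partition lattices and Dowling lattices are geometric; see \cite{Dowling1973}). A finite product of geometric lattices is again a geometric lattice, of rank equal to the sum of the ranks of the factors; and by the rank formula $\rk\parts = n - \ell$ this total rank equals $\rk\parts$ (one checks $\sum (n_i-1) + \sum m_j = n - \ell$, since $\sum n_i + \sum m_j = n$).

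\emph{Step 2: Apply Folkman's theorem.} For a geometric lattice $L$ of rank $d$, the reduced homology $\H_i(\zero_L,\onehat_L)$ of its proper part vanishes unless $i = d-2$, by Folkman \cite{Folkman1966} (equivalently, geometric lattices are Cohen--Macaulay of the appropriate dimension). Here the open interval $(\zero,\parts)$ is precisely the proper part of the geometric lattice $[\zero,\parts]$, which has rank $\rk\parts$. Hence $\H_{r-2}(\zero,\parts)=0$ unless $r-2 = \rk\parts - 2$, i.e. unless $r = \rk\parts$. Feeding this into the defining formula \eqref{eq:whitney} for $\WH_r(\D(G,S))$, every summand indexed by a $\parts$ with $\rk\parts\neq r$ is zero, so only rank--$r$ elements contribute.

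\emph{Main obstacle.} There is no serious obstacle: the content is entirely a matter of correctly bookkeeping that (i) products of geometric lattices are geometric and ranks add, and (ii) the rank of the product lattice in Theorem \ref{thm:intervals}\eqref{eq:lowerint} matches $\rk\parts$ as defined by $n-\ell(\beta)$. The only mildly delicate point — and the one I would state carefully — is the homology-of-a-product input: the order complex of the proper part of a product of bounded posets is (up to homotopy) a join of the order complexes of the proper parts of the factors (this is the statement that $\onehat$-and-$\zero$-removal interacts well with products, cf. the standard "join" formula for $\H_*$ of proper parts of products of bounded posets), so the top nonvanishing degree of the product is the sum of the top degrees of the factors, which is again $\rk\parts - 2$. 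Alternatively one can bypass the join formula entirely and simply invoke that a product of geometric lattices is geometric, then apply Folkman once to the product — this is the cleaner route and the one I would write up.
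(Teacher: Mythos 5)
Your proposal is correct and matches the paper's argument: the paper likewise deduces the corollary directly from Theorem \ref{thm:intervals} (the closed interval $[\zero,\parts]$ is a product of partition and Dowling lattices, hence a geometric lattice of rank $\rk\parts$) together with Folkman's theorem that the proper part of a geometric lattice has reduced homology concentrated in degree $\rk - 2$. Your extra rank bookkeeping $\sum(n_i-1)+\sum m_j = n-\ell$ is right and just makes explicit what the paper leaves implicit.
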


The explicit decomposition of intervals $[\zero,\parts]$ from Theorem
\ref{thm:intervals}\eqref{eq:lowerint} also gives a formula
for the dimension of the Whitney homology, but this can alternatively be derived from our characteristic polynomial calculation:
\begin{theorem}\label{thm:WhitneyHilbert}
The Hilbert series for Whitney homology is
\[P_{\WH}(t):=\sum_{r\geq0}\dim\WH_r(\D(G,S))t^r = 
\prod_{i=0}^{n-1}\left(1+(|S|+|G|i)t\right).\]
\end{theorem}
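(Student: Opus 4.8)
The plan is to compute the Hilbert series $P_{\WH}(t)$ via the characteristic polynomial of Theorem~\ref{thm:charpoly}, using the standard interpretation of Whitney homology ranks in terms of the M\"obius function. First I would recall the general fact that for a geometric lattice $L$, the reduced homology $\H_{r-2}(\zero,x)$ is nonzero only when $\rk(x)=r$, and in that case $\dim \H_{r-2}(\zero,x) = |\mu_L(\zero,x)|$ (by the theorem of Folkman, since $[\zero,x]$ is itself geometric). By Theorem~\ref{thm:intervals}\eqref{eq:lowerint}, each interval $[\zero,\parts]$ is a product of partition and Dowling lattices, hence geometric, so Corollary~\ref{cor:whitney-homology-ranks} applies and $\dim \H_{r-2}(\zero,\parts) = |\mu(\zero,\parts)|$ for $\rk\parts = r$.

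Next I would rewrite the desired Hilbert series: since the $r$-th graded piece of $\WH$ collects exactly the rank-$r$ elements,
\[
P_{\WH}(t) = \sum_{r\geq 0} \dim\WH_r(\D(G,S))\, t^r = \sum_{\parts\in\D(G,S)} |\mu(\zero,\parts)|\, t^{\rk\parts}.
\]
Now I would observe that the sign of $\mu(\zero,\parts)$ in a ranked poset whose intervals $[\zero,\parts]$ are geometric is $(-1)^{\rk\parts}$. Hence $|\mu(\zero,\parts)| = (-1)^{\rk\parts}\mu(\zero,\parts)$, and therefore
\[
P_{\WH}(t) = \sum_{\parts\in\D(G,S)} (-1)^{\rk\parts}\mu(\zero,\parts)\, t^{\rk\parts} = \sum_{\parts\in\D(G,S)} \mu(\zero,\parts)(-t)^{\rk\parts}.
\]
Comparing with the definition $\chi(\D(G,S);t) = \sum_{\parts}\mu(\zero,\parts)t^{\,n-\rk\parts}$, I would substitute: set $t^{n-\rk\parts}$ aside by writing $\mu(\zero,\parts)(-t)^{\rk\parts} = \mu(\zero,\parts)(-t)^{\rk\parts}$ and factor out the top power. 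Concretely, $P_{\WH}(t) = (-t)^n \sum_{\parts}\mu(\zero,\parts)(-t)^{\rk\parts - n} = (-t)^n\,\chi\!\left(\D(G,S); -\tfrac1t\right)$, so that $P_{\WH}(t) = (-t)^n\,\chi(\D(G,S);-1/t)$.

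Finally I would plug in the factorization from Theorem~\ref{thm:charpoly}: for $S\neq\emptyset$,
\[
\chi(\D(G,S);-1/t) = \prod_{i=0}^{n-1}\left(-\tfrac1t - |S| - |G|i\right) = (-1)^n\,t^{-n}\prod_{i=0}^{n-1}\left(1 + (|S|+|G|i)t\right),
\]
and multiplying by $(-t)^n = (-1)^n t^n$ cancels the $(-1)^n t^{-n}$, yielding $P_{\WH}(t) = \prod_{i=0}^{n-1}(1+(|S|+|G|i)t)$, as claimed. The only genuine point requiring care — the main (minor) obstacle — is justifying the two facts about geometric lattices used above, namely that $\dim\H_{r-2}(\zero,x) = |\mu(\zero,x)|$ and that $\mu(\zero,x)$ has sign $(-1)^{\rk x}$; both follow from Folkman's theorem together with the product decomposition of Theorem~\ref{thm:intervals}, since the M\"obius function and homology of a product of posets are the (signed) product and tensor product of the factors, and the claim is classical for partition and Dowling lattices individually. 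Everything else is a bookkeeping substitution into the already-established characteristic polynomial formula.
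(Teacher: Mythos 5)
Your proposal is correct and follows essentially the same route as the paper: both specialize the characteristic polynomial via $P_{\WH}(t)=(-t)^n\chi(\D(G,S);-1/t)$ and then plug in the factorization of Theorem \ref{thm:charpoly}. The only difference is that you spell out the justification (Folkman's theorem plus the product decomposition of intervals) for the identification $\dim\H_{r-2}(\zero,\parts)=|\mu(\zero,\parts)|=(-1)^{\rk\parts}\mu(\zero,\parts)$, which the paper invokes as the standard fact that Whitney numbers of the first kind compute both quantities.
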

\begin{proof}
This can be obtained through the following specialization of the characteristic
polynomial in Theorem \ref{thm:charpoly}: 
\[P_{\WH}(t)=(-t)^n\chi\left(\D(G,S);-\frac{1}{t}\right) = 
(-t)^n\prod_{i=0}^{n-1}\left(-\frac{1}{t} - |S| - |G|i\right)\]
when $S$ is nonempty.
This is because the Whitney numbers (of the first kind) are simultaneously the coefficients 
in the characteristic polynomial and (up to sign and reordering) the rank of the Whitney 
homology groups.

When $S=\emptyset$, using the second formula of Theorem \ref{thm:charpoly} yields the result.
\end{proof}

We aim to describe the Whitney homology as an $\G$--module. We do so by using 
familiar representations and hence we fix the following notation:
\begin{itemize}
\item $\iota_j$ is the trivial representation of $\sym[j]$.
\item $\epsilon_j$ is the sign representation of $\sym[j]$.
\item $\pi_m$ is the $\sym[m]$--module structure on the nontrivial
homology group of the partition lattice, $\wt{\Ho}_{m-3}(\Q_{\n[m]})$.
By Stanley's work \cite[Thm.~7.3]{Stanley1982}, this representation, up to a twist by a sign representation, is an induced representation $\operatorname{Ind}_{\langle c_m \rangle}^{\sym[m]}\chi_m$ where $c_m$ is an $m$-cycle and $\chi_m$ is a faithful character.
\item If $H$ is a finite group, then $\rho_m(H)$ denotes the $\sym[m][H]$--module structure on the
nontrivial homology group of a Dowling lattice, $\wt{\Ho}_{m-2}(\D[m](H))$, 
whose character was described by Hanlon \cite[Thm.~3.4]{Hanlon1984}.
\item If $H$ is a group, and $U$ and $V$ are $\sym[n]$-- and $H$--modules, respectively, then 
$U[V]:=U\otimes(V^{\otimes n})$ is naturally a representation of the wreath product group
$\sym[n][H]$.
\end{itemize}

Let $S$ be a finite $G$-set with orbits $\O(S)=\{\oo_1,\dots,\oo_k\}$,
and for each $i$ let $G_i$ be the stabilizer subgroup for a fixed representative $s_i\in\oo_i$.
Given an $\O(S)$--labeled partition $(\lambda||m_1,\dots,m_k)$ of $n$, with 
$\lambda=1^{a_1}2^{a_2}\cdots n^{a_n}$, construct a subgroup of $\G$ by
\[\sym[(\lambda||m_1,\dots,m_k)]:=
\sym[a_1][\sym[1]\times G]\times\cdots\times\sym[a_n][\sym[n]\times G]
\times \sym[m_1][G_1]\times\cdots\times\sym[m_k][G_k]\]
and construct a $\sym[(\lambda||m_1,\dots,m_k)]$--representation $V_{(\lambda||m_1,\dots,m_k)}$ by
\[\iota_{a_1}[\pi_1]\otimes \epsilon_{a_2}[\pi_2]\otimes \cdots\otimes
\epsilon_{a_{2i}}[\pi_{2i}]\otimes\iota_{a_{2i+1}}[\pi_{2i+1}]\otimes\cdots
\otimes \rho_{m_1}(G_1)\otimes\cdots\otimes\rho_{m_k}(G_k),\]
where $\pi_k$ is viewed as a representation of $\sym[k]\times G$ with $G$ acting trivially.

Inducing such modules to $\G$ then allows us to describe the $\G$--module structure on the Whitney homology of $\D(G,S)$. This is done in the following theorem, generalizing a result of
 Lehrer and Solomon  \cite[Thm.~4.5]{LS1986} for partition lattices.
\begin{theorem}\label{thm:representation}
As an $\G$--module, \[\WH_r(\D(G,S))\cong\bigoplus 
\Ind_{\sym[(\lambda||m_1,\dots,m_k)]}^{\G} V_{(\lambda||m_1,\dots,m_k)},\]
where the sum is over all $\O(S)$--labeled partitions of $n$ such that $\ell(\lambda)=n-r$.
\end{theorem}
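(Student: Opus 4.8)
The plan is to combine two ingredients: the orbit decomposition of $\D(G,S)$ from Theorem~\ref{thm:orbits}, and the product formula for lower intervals from Theorem~\ref{thm:intervals}\eqref{eq:lowerint}. Since $\WH_r(\D(G,S)) = \bigoplus_{\parts} \H_{r-2}(\zero,\parts)$ and $\G$ permutes the summands by permuting the poset elements, the Whitney homology breaks up as a direct sum over $\G$--orbits of $\D(G,S)$. By Theorem~\ref{thm:orbits} these orbits are indexed by $\O(S)$--labeled partitions $(\lambda||m_1,\dots,m_k)$ of $n$, and by Corollary~\ref{cor:whitney-homology-ranks} only orbits with $\ell(\lambda) = n - r$ contribute to $\WH_r$. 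For a single orbit, standard representation theory (summing an induced-from-stabilizer description, i.e. the orbit-counting identity $\bigoplus_{\parts\in\text{orbit}}\H_{r-2}(\zero,\parts) \cong \Ind_{\mathrm{Stab}_\G\parts}^\G \H_{r-2}(\zero,\parts)$) reduces the problem to: (i) identify the stabilizer of a chosen representative $\parts$ with the subgroup $\sym[(\lambda||m_1,\dots,m_k)]$; and (ii) identify the $\mathrm{Stab}_\G\parts$--module $\H_{r-2}(\zero,\parts)$ with $V_{(\lambda||m_1,\dots,m_k)}$.

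For step (i), I would pick the representative $(\wt\beta_\lambda, z_\lambda)$ constructed in the proof of Theorem~\ref{thm:orbits}: blocks $B_1,\dots,B_\ell$ of sizes given by $\lambda$, all with constant coloring $1\in G$, and zero block split as $Z_1,\dots,Z_k$ with $Z_j$ constantly colored by the orbit representative $s_j\in\oo_j$. An element $(g,\sigma)\in\G$ stabilizes this if and only if $\sigma$ permutes the $B_i$ of equal size among themselves and permutes each $Z_j$ within itself, and the $g$-components correct the colorings back: on a block $B_i$ of size $m$ the pair $(\sigma|_{B_i}, g|_{B_i})$ ranges over $\sym[m]\times G$ (the $G$ acting diagonally but, since the coloring is projectivized, effectively giving a $\sym[m]\times G$ worth of stabilizing data — hence the $\sym[a_m][\sym[m]\times G]$ factor for the $a_m$ blocks of size $m$), while on $Z_j$ one needs $g_r.s_j = s_j$, i.e. $g_r\in G_{\oo_j}=G_j$, giving $\sym[m_j][G_j]$. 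Assembling over all block sizes and orbits yields exactly $\sym[(\lambda||m_1,\dots,m_k)]$.

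For step (ii), apply Theorem~\ref{thm:intervals}\eqref{eq:lowerint} to the representative: $[\zero,(\wt\beta_\lambda,z_\lambda)] \cong \prod_{i}\Q_{B_i}\times\prod_j\D[Z_j](G_j)$, so by the Künneth theorem for order complexes (products of geometric lattices) $\H_{r-2}(\zero,\parts)$ is the tensor product of the top homologies $\wt\H_{|B_i|-3}(\Q_{B_i})$ and $\wt\H_{|Z_j|-2}(\D[|Z_j|](G_j))$ — these are the modules $\pi_{|B_i|}$ and $\rho_{m_j}(G_j)$ in the paper's notation. The remaining point is to track how the stabilizer acts: a block-permuting $\sym[a_m]$ permutes the tensor factors $\pi_m^{\otimes a_m}$ giving an $\iota_{a_m}[\pi_m]$ or $\epsilon_{a_m}[\pi_m]$ structure, and the sign twist $\epsilon_{a_m}$ versus $\iota_{a_m}$ appears for even $m$ because swapping two blocks of size $m$ in the order complex induces a sign $(-1)^{m}$ — this is the standard sign subtlety already present in the Lehrer--Solomon computation \cite[Thm.~4.5]{LS1986}. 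The $G$-factors act trivially on each $\pi_m$ (the partition-lattice homology ignores the $G$-coloring), consistent with viewing $\pi_m$ as a $\sym[m]\times G$-module with trivial $G$-action; and the $\sym[m_j][G_j]$ acts on $\wt\H(\D[m_j](G_j))$ via $\rho_{m_j}(G_j)$ by definition. This identifies the $\mathrm{Stab}$--module as $V_{(\lambda||m_1,\dots,m_k)}$, and inducing up over all contributing $\lambda$ finishes the proof.

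The main obstacle I anticipate is bookkeeping the signs in step (ii): getting right exactly when the wreath-product action on $\pi_m^{\otimes a_m}$ carries the sign representation $\epsilon_{a_m}$ of the permuting symmetric group (odd-dimensional reduced homology of $\Q_{\n[m]}$ sits in degree $m-3$, so transposing two such complexes contributes $(-1)^{(m-3)^2} = (-1)^{m-3} = (-1)^{m-1}$ — one must reconcile this with the stated parity in $V_{(\lambda||m_1,\dots,m_k)}$, where $\epsilon$ appears for even subscripts $a_{2i}$). Everything else — the orbit sum, the stabilizer computation, the Künneth reduction — is routine given the results already established.
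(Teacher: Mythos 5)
Your proposal is correct and follows essentially the same route as the paper's (much terser) proof: decompose $\WH_r$ over $\G$--orbits using Theorem \ref{thm:orbits} and Corollary \ref{cor:whitney-homology-ranks}, identify the stabilizer of the standard orbit representative with $\sym[(\lambda||m_1,\dots,m_k)]$, and use the interval decomposition of Theorem \ref{thm:intervals} to identify the stabilizer module with $V_{(\lambda||m_1,\dots,m_k)}$. Your sign worry resolves in your favor: the correct swap sign for two degree-$(m-3)$ factors is $(-1)^{(m-3)^2}=(-1)^{m-1}$, which is $-1$ exactly when $m$ is even, matching the placement of $\epsilon_{a_{2i}}$ in the definition of $V_{(\lambda||m_1,\dots,m_k)}$ (the ``$(-1)^m$'' you wrote earlier in step (ii) was the slip, not the theorem's convention).
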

\begin{proof}
By our description of the orbits in Theorem \ref{thm:orbits}, it is clear that
the representation decomposes over the $\O(S)$--labeled partitions, and by Corollary \ref{cor:whitney-homology-ranks} all such partitions must have
$\ell(\lambda)=n-r$.
Let us consider a single orbit corresponding to $(\lambda||m_1,\dots,m_k)$, which
by Theorem \ref{thm:intervals} contributes summands of the form
\[(\wt{\Ho}_{-2}(\Q_{\n[1]}))^{\otimes a_1} \otimes \cdots \otimes
(\wt{\Ho}_{n-3}(\Q_{\n[n]}))^{\otimes a_n} \otimes
\wt{\Ho}_{m_1-2}(\D[m_1](G_1))\otimes \cdots\otimes
\wt{\Ho}_{m_k-2}(\D[m_k](G_k)).\]
The stabilizer of this summand is $\sym[(\lambda||m_1,\dots,m_k)]$, and as a 
representation of the stabilizer it is $V_{(\lambda||m_1,\dots,m_k)}$.
\end{proof}

\section{Symmetric arrangements}\label{sec:arrangements}

Throughout this section, one may take the word `space' to mean either a CW-complex or an algebraic variety over some algebraically closed field. Fix a finite group $G$, acting almost freely on a connected space $X$. By this we mean that the set of singular points of the $G$--actions, i.e. the set of points with non-trivial stabilizer, is finite. Denote this singular set by
\[S:= \operatorname{Sing}_G(X) = \bigcup_{g\in G\setminus\{e\}} X^g.\]
Note that $G$ acts freely on $X\setminus S$, and also the action of
$G$ on $X$ restricts to an action of $G$ on the set $S$. Denote the set of
$G$--orbits of $S$ by
\[\O:= \{G.x\ |\ x\in S\}.\]

\subsection{Introducing the arrangements}\label{sec:arrangementdefs}
We define an arrangement $\A=\A(G,X)$ as the collection of the
following (closed) subspaces in $X^n$:
\begin{enumerate}
\item $H_{ij}(g)=\{(x_1,\dots,x_n)\in X^n\ |\ g.x_i=x_j\}$ for $1\leq
i<j\leq n$ and $g\in G$, and
\item $H_i^s=\{(x_1,\dots,x_n)\in X^n\ |\ x_i=s\}$ for $1\leq i\leq n$ and
$s\in S$.
\end{enumerate}
Note that for the first type of subspace, we may extend our notation to allow
$j<i$ by observing $H_{ij}(g)=H_{ji}(g^{-1})$.

The wreath product group $\G$ acts naturally on $X^n$: first, the group $G^n$ acts on $X^n$ coordinatewise, and then $\sym$ permutes the coordinates.
Explicitly, $w=(g_1,\dots,g_n,\sigma)\in \G$ acts on $(x_1,\dots,x_n)\in X^n$ by sending it to the tuple whose $\sigma(i)$-th entry is $g_i.x_i$. This action induces an action on $\A(G,X)$ 
as described here:
\begin{enumerate}
\item $w.H_{ij}(h) = H_{\sigma(i)\sigma(j)}(g_jhg_i^{-1})$, and
\item $w.H_i^s = H_{\sigma(i)}^{g_i.s}$.
\end{enumerate}

Because the arrangement $\A(G,X)$ is $\G$--invariant, the action of $\G$ on $X^n$
also restricts to an action of $\G$ on the complement, which we denote by 
\[\M(G,X) := X^n\setminus \bigcup_{H\in\A} H.\]
In fact, the action of $\G$ on $\M(G,X)$ is free.

\begin{remark}\label{rmk:orbitconfig}
The space $\M(G,X)$ is what we denote by $\Conf_n^G(X\setminus S)$ in the
introduction. We use this notation to emphasize our perspective of studying
$\M(G,X)$ as a subspace of $X^n$ rather than $(X\setminus S)^n$.
\end{remark}

\subsection{The poset of layers}\label{sec:layers}

A fundamental combinatorial object attached to the arrangement $\A$ is its \emph{poset of layers}.

\begin{definition}
A \textbf{layer} of $\A$ is defined to be a connected component
of an intersection $\cap_{H\in B}H$ for some subset $B\subseteq\A$.

The set of all layers is partially ordered under reverse
inclusion. We call the resulting poset the \textbf{poset of layers} and denote it by $\P(G,X)$.
\end{definition}
The poset $\P(G,X)$ has a unique minimum $\zero$ corresponding to the empty intersection $X^n$. Moreover, the action of $\G$ on $X^n$ and $\A(G,X)$ induces an action 
of $\G$ on $\P(G,X)$.

Note that intersections of layers need not be connected; a simple example of
this phenomenon appears in Example \ref{ex:typeCP} below. However, it will prove useful to
observe that the intersection of \emph{enough} subspaces gives a connected space. This fact will
follow from the proof of Theorem \ref{thm:layers} and is a special
property of the type of arrangements we consider here. 

\begin{example}
It can be helpful to keep in mind the classical example of the trivial group
$G=\{1\}$ acting on any space $X$. Here, the arrangement $\A(\{1\},X)$ is
called the braid arrangement in $X^n$. The name comes from its complement
$\M(\{1\},X)$ which is an ordered configuration space whose fundamental group is
a generalized pure braid group. 

Since $S=\emptyset$ in this example, the arrangement $\A$
consists only of subspaces $H_{ij}$ for $1\leq i<j\leq n$. All intersections
will be connected, and the poset of layers is the partition lattice $\Q_{\n}$.
One can see this by viewing an intersection as putting an equivalence relation
on the coordinates of $X^n$, since $(x_1,\dots,x_n)\in H_{ij}$ whenever
$x_i=x_j$. 
\end{example}

\begin{example}[\textbf{Type C Dowling poset}]\label{ex:typeCP}
Let $X$ be one of $\C$, $\C^\times$, or a complex elliptic curve, and let
$G=\Z_2$ act on $X$ by using the group inversion. The arrangements $\A(G,X)$
arise naturally from the type C root system, viewed as characters on the complex
torus, and were studied in \cite{bibby2} where a specialization of Theorem
\ref{thm:layers} appeared as Theorem 1.

For this action of $\Z_2$ on $X$, the set $S$ is the set of two-torsion points $X[2]$. 
More generally, one could consider this action of $\Z_2$ on any algebraic group $X$ with finitely many two-torsion points.

The poset $\P(\Z_2,\C^\times)$ is isomorphic to the poset $\P(\Z_2,S^1)$, where the latter arises from the action of $\Z_2$ on $S^1$ by using the group inversion, and the Hasse diagram in the case $n=2$ is given in Figure \ref{fig:PZ2-toric}.
To foreshadow Theorem \ref{thm:layers} below, one can
see an obvious isomorphism between this poset and
$\D[2](\Z_2,\{\pm1\})$, whose Hasse diagram is shown in Figure \ref{fig:DZ2-toric}.

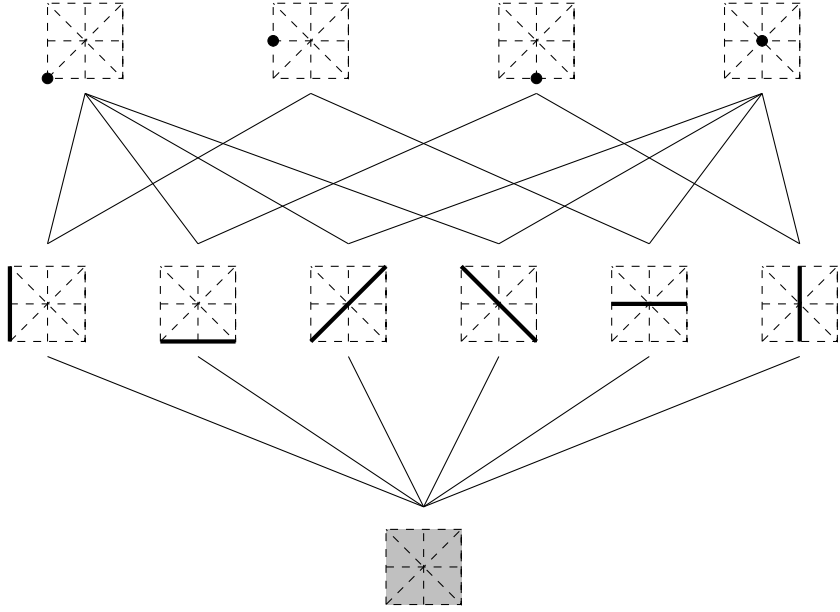
\begin{figure}[htb]
\begin{tikzpicture}[scale=1]
\foreach \x/\y in
{-0.5/-2.5,-5.5/1,-3.5/1,-1.5/1,0.5/1,2.5/1,4.5/1,-5.0/4.5,-2.0/4.5,1.0/4.5,4.0/4.5}
{
\draw[dashed,-]
(\x,\y)--(\x,\y+1)--(\x+1,\y+1)--(\x+1,\y)--(\x,\y)--(\x+1,\y+1)--(\x+1,\y)--(\x,\y+1);
\draw[dashed,-] (\x+0.5,\y)--(\x+0.5,\y+1);
\draw[dashed,-] (\x,\y+0.5)--(\x+1,\y+.5);
}
\foreach \x in {-5,-3,-1,1,3,5}
{
\draw[-] (\x,0.8)--(0,-1.2);
}
\foreach \x in {-5.0,-3.0,-1,1}
{
\draw[-] (-4.5,4.3)--(\x,2.3);
\draw[-] (4.5,4.3)--(-\x,2.3);
}
\draw[-] (1.5,4.3)--(5,2.3);
\draw[-] (-1.5,4.3)--(3,2.3);
\draw[-] (1.5,4.3)--(-3,2.3);
\draw[-] (-1.5,4.3)--(-5,2.3);
\draw[ultra thick,-] (-5.5,1)--(-5.5,2);
\draw[ultra thick,-] (-3.5,1)--(-2.5,1);
\draw[ultra thick,-] (-1.5,1)--(-0.5,2);
\draw[ultra thick,-] (0.5,2)--(1.5,1);
\draw[ultra thick,-] (2.5,1.5)--(3.5,1.5);
\draw[ultra thick,-] (5.0,1)--(5.0,2);
\draw[fill] (-5.0,4.5) circle (2pt);
\draw[fill] (-2.0,5) circle (2pt);
\draw[fill] (1.5,4.5) circle (2pt);
\draw[fill] (4.5,5) circle (2pt);
\fill[nearly transparent] (-0.5,-2.5) rectangle (0.5,-1.5);
\end{tikzpicture}
\caption{Poset of layers for the type C toric arrangement,
$\P[2](\Z_2,\C^\times)$ 
or $\P[2](\Z_2,S^1)$, 
which is isomorphic to $\D[2](\Z_2,\{\pm1\})$ from
Figure \ref{fig:DZ2-toric}. See also Figure \ref{fig:OZ2} and Examples
\ref{ex:typeC} and \ref{ex:typeCP}.}
\label{fig:PZ2-toric}
\end{figure}

To demonstrate the possibility of disconnected intersections, observe 
that when $X=\C^\times$ and $n=2$, the intersection of the two diagonals is a set of
two points: $H_{12}(1)\cap H_{12}(-1)=\{(1,1),(-1,-1)\}$. This can be seen in the Hasse 
diagram since there is not a unique least upper bound of $H_{12}(1)$ and $H_{12}(-1)$.
\end{example}

\begin{remark}
\begin{enumerate}
\item
When $X=\C$, the arrangement $\A(G,X)$ is a (complex) hyperplane arrangement.
Intersections of hyperplanes are all connected, and the poset of layers is a
geometric lattice usually referred to as the intersection lattice.
\item
One may wonder why we consider the set of connected components of
intersections, rather than the set of intersections themselves. 
A case for this choice is made by having codimension be a strictly increasing function on $\P(G,X)$, which is in fact proportional to the intrinsic rank.

Meanwhile, the proof of Theorem \ref{thm:layers} below shows that every layer is in fact an
intersection. That is, the poset of layers is a subposet of the poset of all
intersections, and one that appears easier to work with and study.
It is important to note that this relationship does not exist for general 
arrangements.
\end{enumerate}
\end{remark}

The main theorem of this subsection is that the poset of layers is in fact familiar from our above work.
\begin{theorem}[\textbf{Combinatorial description of the poset of layers}]
\label{thm:layers}
Let $X$ be a 
connected 
almost free $G$-space with 
\[S=\operatorname{Sing}_G(X)=\displaystyle\bigcup_{g\in G\setminus\{e\}} X^g.\]
There is a $\G$--equivariant isomorphism of posets
\begin{equation}\label{eq:posetisom}
\P(G,X) \cong \D(G,S).
\end{equation}
\end{theorem}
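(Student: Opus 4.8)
The plan is to exhibit an explicit order isomorphism $\D(G,S)\to\P(G,X)$ and then check $\G$--equivariance. Given $\parts\in\D(G,S)$ with blocks $\beta=\{B_1,\dots,B_\ell\}$, representative $G$--colorings $b_i\colon B_i\to G$, and zero block $Z$ with $S$--coloring $z$, I would associate the subspace
\[
X^{\parts}:=\bigcap_i\ \bigcap_{j,k\in B_i} H_{jk}\big(b_i(k)b_i(j)^{-1}\big)\ \cap\ \bigcap_{m\in Z} H_m^{z(m)}\ \subseteq X^n.
\]
First I would verify that this does not depend on the choices of $b_i$ (replacing $b_i$ by $b_i g$ leaves the subspace unchanged), that set-theoretically it consists exactly of the tuples with $x_j=b_i(j).p_i$ for a free parameter $p_i\in X$ on each block $B_i$ and $x_m=z(m)$ on $Z$, and hence that it is an \emph{intersection} of members of $\A(G,X)$, connected (it is isomorphic to $X^\ell$, and $X$ is connected), of codimension $n-\ell=\rk\parts$. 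This already shows the assignment lands in $\P(G,X)$ and, along the way, that every layer in its image is an honest intersection.

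For surjectivity I would analyze an arbitrary intersection $\bigcap_{H\in B'}H$ by encoding $B'$ as a graph on $\n$ with a $G$--labelled edge $j\!-\!k$ for each $H_{jk}(g)\in B'$ and a ``pin to $s$'' at vertex $m$ for each $H^s_m\in B'$. Propagating coordinates along a connected component of this graph, a component with no pin and all cycles balanced contributes a free copy of $X$ and becomes an ordinary block of a partial $G$--partition, while a component carrying a pin or an unbalanced cycle forces the coordinate at one of its vertices, hence every coordinate in the component, to lie in the finite set $S$. (This is exactly where the almost free hypothesis is used: $G$ acts freely on the nonempty $X\setminus S$, so for non-singular $x_j$ the element $g$ with $g.x_j=x_k$ is unique, and an unbalanced cycle through $j$ forces $x_j\in X^g\subseteq S$.) Consequently a connected component of $\bigcap_{H\in B'}H$ amounts to a consistent choice of $S$--value on each collapsed component, and is precisely one of the subspaces $X^{\parts}$; this proves surjectivity. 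Injectivity is then immediate: from $X^{\parts}$ one recovers $Z$ as the set of coordinates that are constant, the value $z(m)$ as that constant, the partition of $\n\setminus Z$ via the relation ``the image of the projection to $(x_j,x_k)$ is one--dimensional'', and the $G$--coloring of each block up to a global element via the generically unique group element relating $x_j$ to $x_k$.

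It remains to show the bijection and its inverse are order preserving, i.e. $(\wt{\alpha},z_\alpha)\leq\parts\iff X^{\parts}\subseteq X^{(\wt{\alpha},z_\alpha)}$ (recall $\P(G,X)$ is ordered by \emph{reverse} inclusion). The forward direction reduces by transitivity to the two covering relations \textbf{(merge)} and \textbf{(color)}: for \textbf{(merge)} one checks that $x_j=b_i(j).p$ on the coarser side is the reparametrization $p\mapsto g.p$ of the finer side, and for \textbf{(color)} one specializes the free parameter of a block to the point $s\in S\subseteq X$. For the converse, suppose $X^{\parts}\subseteq X^{(\wt{\alpha},z_\alpha)}$; comparing which coordinates are constant forces $Z_\alpha\subseteq Z_\beta$ with $z_\alpha=z_\beta$ there, and comparing, for $j,k$ in a common $\alpha$--block, the forced relation $x_k=a(k)a(j)^{-1}.x_j$ against the behaviour of $(x_j,x_k)$ on $X^{\parts}$ forces each $\alpha$--block to sit inside a single $\beta$--block (with matching $G$--ratios, so $b|_A=a_A\cdot g_A$ for a constant $g_A$) or inside $Z_\beta$ (and then $z_\beta|_A$ factors through a $G$--equivariant map $G\to S$, exactly the \textbf{(color)} condition); assembling these shows $\parts$ is obtained from $(\wt{\alpha},z_\alpha)$ by merges and colorings. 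Finally, $\G$--equivariance is a direct computation: for $g=(g_1,\dots,g_n,\sigma)$ one pushes the defining equations $x_j=b_i(j).p_i$ and $x_m=z(m)$ through the $\G$--action on $X^n$ and matches the formulas $b'(\sigma(j))=g_jb(j)$, $z'(\sigma(m))=g_m.z(m)$ defining the action on $\D(G,S)$, obtaining $g.X^{\parts}=X^{g.\parts}$.

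The step I expect to be the main obstacle is the connected--component analysis underlying surjectivity: honestly tracking when a component of the linking graph ``collapses'' into $S$ versus remaining free, and confirming that the connected components of an arbitrary intersection are \emph{exactly} the subspaces $X^{\parts}$. This is the only place where genuine geometry (and the almost free hypothesis) enters; the remaining verifications are essentially bookkeeping with the covering relations.
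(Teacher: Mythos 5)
Your proposal is correct and follows essentially the same route as the paper: the same assignment $\parts\mapsto X^{\parts}$ realized as an intersection of the $H_{ij}(g)$ and $H_i^s$, the same verification of the two covering relations, and the same key geometric fact (an unbalanced cycle or a pin forces a coordinate into $S$, which is the paper's Lemma \ref{lem:intersect}) driving the identification of connected components of arbitrary intersections with the $X^{\parts}$. Your graph-propagation phrasing of surjectivity and your explicit check that the inverse is order-preserving are just a reorganization of the paper's construction of the inverse map $Y\mapsto(\wt{\beta}_Y,z_Y)$.
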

Considering just the atoms in these two posets for a moment,
isomorphism \eqref{eq:posetisom} is given by the obvious bijection between
$\alpha_{ij}(g)$, $\alpha_i^s$ (from Lemma \ref{lem:atoms}) and the subspaces
$H_{ij}(g)$, $H_i^s$ in the arrangement $\A(G,X)$.
We make note of some key observations about how these subspaces intersect:
e.g. imposing both $g.x_i=x_j$ and $h.x_i=x_j$ has the consequence that $g^{-1}h.x_i = x_i$,
so $x_i\in X^{g^{-1}h}$ and $x_j\in X^{hg^{-1}}$ are singular points. 
More generally, one can check the following. 
\begin{lemma}\label{lem:intersect}
\begin{enumerate}
\item \label{eq:intersect2}
For $i,j\in\n$ and $g,h\in G$ distinct, the intersection $H_{ij}(g)\cap
H_{ij}(h)$ has connected components $H_i^s\cap H_j^{g.s}$ where $s\in X^{g^{-1}h}$.
\item \label{eq:intersect1}
For $i\in\n$ and $s,t\in S$ distinct, we have $H_i^s\cap H_i^t=\emptyset$.
\item \label{eq:intersect3}
For $i,j,k\in\n$ distinct and $g,h\in G$, 
$H_{ij}(g)\cap H_{j k}(h) \subseteq H_{ik}(hg)$.
\end{enumerate}
\end{lemma}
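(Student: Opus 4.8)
The plan is to read each of the three claims directly off the defining equations $H_{ij}(g) = \{x : g.x_i = x_j\}$ and $H_i^s = \{x : x_i = s\}$, handling them in the order \eqref{eq:intersect1}, \eqref{eq:intersect3}, \eqref{eq:intersect2}. The first two are essentially one-liners, and they also make the component decomposition in the third transparent.

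For \eqref{eq:intersect1}: a point of $H_i^s \cap H_i^t$ would satisfy $x_i = s$ and $x_i = t$ simultaneously, so the intersection is empty whenever $s \neq t$. For \eqref{eq:intersect3}: if $x \in H_{ij}(g) \cap H_{jk}(h)$ then $g.x_i = x_j$ and $h.x_j = x_k$, hence $(hg).x_i = h.(g.x_i) = h.x_j = x_k$, which says precisely that $x \in H_{ik}(hg)$; distinctness of $i,j,k$ is used only to make the three subspaces well-defined.

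For \eqref{eq:intersect2}, the only substantive part: a point lies in $H_{ij}(g) \cap H_{ij}(h)$ iff $g.x_i = x_j = h.x_i$, equivalently $g^{-1}h.x_i = x_i$ together with $x_j = g.x_i$. Since $g \neq h$, the element $g^{-1}h$ is nontrivial, so $x_i$ lies in the fixed locus $X^{g^{-1}h}$, which is one of the sets in the union defining $S$ and is therefore finite. Thus the intersection is the finite union $\bigcup_{s \in X^{g^{-1}h}} \bigl(H_i^s \cap H_j^{g.s}\bigr)$; for the reverse inclusion one checks that $h.s = g.(g^{-1}h.s) = g.s$ when $g^{-1}h$ fixes $s$, so a point with $x_i = s$ and $x_j = g.s$ indeed lies in both $H_{ij}(g)$ and $H_{ij}(h)$. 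Each piece $H_i^s \cap H_j^{g.s}$ is, in the remaining $n-2$ coordinates, a copy of $X^{n-2}$, hence connected because $X$ is connected; and distinct pieces are disjoint since their $i$-th coordinates are the distinct points $s$, by \eqref{eq:intersect1}. Therefore the pieces are exactly the connected components.

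There is no genuine obstacle here; the only point requiring care is the bookkeeping in \eqref{eq:intersect2} — confirming that the index set is precisely $X^{g^{-1}h}$ rather than something larger, that this set is finite and contained in $S$ so that the listed pieces really are the components, and that the description of each piece as a product with $X^{n-2}$ is legitimate, which it is precisely because $i \neq j$ so that fixing two distinct coordinates imposes no further collapsing.
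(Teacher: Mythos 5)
Your proof is correct, and it follows exactly the route the paper intends: the paper leaves this lemma as a routine check, sketching only the key observation (in the paragraph preceding the lemma) that imposing $g.x_i = x_j$ and $h.x_i = x_j$ forces $x_i \in X^{g^{-1}h}$, which is precisely the heart of your argument for the first item. Your additional bookkeeping — finiteness of $X^{g^{-1}h}\subseteq S$, connectedness of each piece as a copy of $X^{n-2}$, and disjointness of the pieces — is exactly what is needed to upgrade the decomposition to a statement about connected components.
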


\begin{proof}[Proof of Theorem \ref{thm:layers}]
Start by defining a map $\phi:\D(G,S)\to\P(G,X)$. For
$\parts\in\D(G,S)$ with partition $\beta=(B_1,\ldots,B_\ell)$ we get a product decomposition
\[ X^{\n} = X^{B_1\cup \ldots\cup B_\ell \cup Z} \cong X^{B_1}\times \ldots\times X^{B_\ell}\times X^Z \]
where $X^{B_i}$ is the space of functions $B_i\to X$. Now, for every projectively colored block $\wt{B}_i\in \wt{\beta}$ we define a connected subspace $X^{\wt{B}_i}\subseteq X^{B_i}$, and one $X^z\subseteq X^Z$ so that their product gives a connected subspace
\[ X^{\parts}:= X^{\wt{B}_1}\times\ldots\times X^{\wt{B}_\ell}\times X^z \;\subseteq\; X^{B_1}\times \ldots\times X^{B_\ell}\times X^Z \cong X^{\n}. \]
Then $\phi$ will be defined to be
\[ \phi: \parts\mapsto X^{\parts}.\]

To illustrate the construction of $X^{\wt{B}_i}$, consider first a projectivized
$G$--coloring on $\{1,\ldots,d\}$ denoted by the suggestive notation
$[g_1:\ldots:g_d]$. This defines a subspace $X^{[g_1:\ldots:g_d]}\subseteq X^d$ by imposing
the equations $(g_i^{-1}.x_i = g_j^{-1}.x_j)$ for all $i$ and $j$. Then the map $\iota_g: X\into X^d$ given by
\[
\iota_g: x\mapsto (g_1.x, g_2.x, g_3.x,\ldots, g_d.x)
\]
gives an isomorphism $X \cong X^{[g_1:\ldots:g_d]}$, and thus the space we defined is connected.
Note that this is well defined, as a different representative $(g_1h,\ldots,g_dh)$
gives equations $h^{-1}g_i^{-1}.x_i = h^{-1}g_j^{-1}.x_j$ which are clearly
equivalent constraints. Also note that the set of defining equations has
many redundancies, and in fact it suffices to only consider pairs $(i,j)$
with $i=1$. However, we avoid making such choices for the purpose of having a canonical construction.

For a general block $\wt{B}\in \wt{\beta}$ we follow the same procedure: pick a representative coloring $b:B\to G$ and consider the subspace $X^{\wt{B}}\subseteq X^B$ consisting of functions $x_B: B\to X$ satisfying
\[ b(i)^{-1}.x_B(i) = b(j)^{-1}.x_B(j) \quad \forall \, i,j\in B.\]
As in the previous paragraph, this definition gives a connected space isomorphic to $X$, and independent of the choice of representative coloring $b$.  

As for the zero block, the function $z:Z\to S\subset X$ gives a point in $X^Z$,
which we take to be the (connected) space $X^z$. One should consider the special case $Z=\{1,\ldots,d\}$, in which this point will be
\[ (z(1),\ldots,z(d)) \in X^d. \]

Observe that the subspace $X^{\parts}$ is cut out of
$X^n$ by equations of the form $g.x_i=x_j$ and $x_i=s$. This gives an
alternative description of $X^{\parts}$ as the intersection of atomic layers
$H_{ij}(g)$ and $H_i^s$. Explicitly, for every $\wt{B}\in\wt{\beta}$ define 
the arrangement
\[ \A[](\wt{B}):=\{H_{ij}(g)\ |\ i,j\in B, gb(i)=b(j)\} \]
and let
\[ \A[](z):=\{H_i^s\ |\ i\in Z, z(i)=s\}.\]
Denote the union of these arrangements by $\A[]\parts$, and observe that $X^{\parts}$ is precisely the
intersection of the subspaces in $\A[]\parts$.

With the definition of $X^{\parts}$ in hand, the map $\phi$ is defined. We must  show that $\phi$ is order-preserving, $\G$-equivariant, and bijective.

Now equivariance is quick: For $w\in\G$ and $\parts\in\D(G,S)$, we have
$\A[](w.\parts)=w.\A[]\parts$ and hence  $X_{w.\parts} = w.X_{\parts}$.

To see that $\phi$ is order-preserving, we consider our two covering
relations from Definition \ref{def:poset}. 
First, consider a \emph{merge}
$(\wt{\beta}\cup\{\wt{A},\wt{B}\},z)\prec(\wt{\beta}\cup\{\wt{C}\},z)$
where $C=A\cup B$ and $c=a\cup b g$ for $g\in G$. Then $X^C\cong X^A\times X^B$ and its subspace $X^{\wt{C}}\subseteq X^C$
is defined by equations of the form $(c(i)^{-1}.x(i)= c(j)^{-1}x(j))$. When $i,j\in A$ these are the defining equations of $X^{\wt{A}}\subseteq X^A$, and when $i,j\in B$ the analogous statement holds for $X^{\wt{B}}$. Therefore, every point $x_C\in X^{\wt{C}}$ satisfies the conditions of being in $X^{\wt{A}}\times X^{\wt{B}}$, and the inclusions $X^{\wt{C}}\subseteq X^{\wt{A}}\times X^{\wt{B}}$ and $ X^{(\wt{\beta}\cup\{\wt{C}\},z)}\subseteq X^{(\wt{\beta}\cup\{\wt{A},\wt{B}\},z)}$ follow.

Next, consider a \emph{coloring} $(\wt{\beta}\cup\{\wt{B}\},z)\prec (\wt{\beta},z')$, where $Z'=Z\cup B$ and $f:G\to S$ is an equivariant function such that $z' = z\cup f\circ b$. Then $X^{Z'} \cong X^B\times X^Z$ and the subspace $X^{z'}$ is the point $z'$. Since $z'$ restricts to the function $z$ on $Z$, it maps into the subspace $X^{z}$ under the projection $X^{Z'}\to X^Z$. As for its projection to $X^B$, on every $i,j\in B$ we need to check that $b(i)^{-1}.z'(i) = b(j)^{-1}.z'(j)$. Indeed, by the equivariance of $f$,
\[
b(i)^{-1}.z'(i) = b(i)^{-1}.f(b(i)) = f(1) = b(j)^{-1}.z'(j)
\]
and the inclusion $X^{z'}\subseteq X^{\wt{B}}\times X^{z}$ follows.

Finally, to show that $\phi$ is bijective we will construct an inverse map.
Given $Y\in \P(G,X)$, let $\A[](Y)$ be the subset of $\A(G,X)$ consisting of
subspaces which contain $Y$. Define $Z_Y = \{i\in\n\ |\ H_i^s\in \A[](Y), \text{ some }s\in S\}$. Then by Lemma \ref{lem:intersect}\eqref{eq:intersect1}, we have for
each $i\in Z_Y$ a unique $s\in S$ for which $H_i^s\in\A[](Y)$; this
defines a map $z_Y:Z_Y\to S$.
Next, by Lemma \ref{lem:intersect}\eqref{eq:intersect2}, for $i,j\notin Z_Y$, if there
is some $g\in G$ for which $H_{ij}(g)\in \A[](Y)$ then this $g$ is unique and we
denote it by $g_{ij}$. Define a partition $\beta_Y$ of $\n\setminus Z_Y$ from
the equivalence relation with $i\sim j$ if there is such a $g_{ij}$. Moreover,
for $B\in\beta_Y$, one can always construct a coloring $b:B\to G$ so that $g_{ij}b(i)=b(j)$,
giving a partial $G$-partition $\wt{\beta}_Y$ of $\n$. It is now easy
to check that the assignment $Y\mapsto (\wt{\beta}_Y,z_Y)$ is indeed an
inverse to $\phi$.
\end{proof}

\begin{remark}\label{rmk:free}
As we saw in Remark \ref{rmk:orbitconfig}, the two complements $\M(G,X)$ and $\M(G,X\setminus S)$ coincide.
However, the former is viewed as the complement of an arrangement in $X^n$ while the latter sits 
inside $(X\setminus S)^n$. The two arrangements have rather different combinatorics,
even though their complements are equal: the poset of layers of $\A(G,X\setminus S)$ is
$\P(G,X\setminus S)\cong \D(G,\emptyset)$ since $G$ acts freely on 
$X\setminus S$, while the arrangement in $X^n$ has
$\P(G,X)\cong \D(G,S)$.

The benefit of working with $X$ over $X\setminus S$ is most apparent when $X$ is compact, e.g. a smooth projective variety, while $X\setminus S$ is not. In particular, when the cohomology of $X$ has pure Hodge structure, many spectral sequence calculations simplify greatly.
\end{remark}

\subsection{More examples}\label{sec:arrangementexs}
Our first motivating example is the Dowling lattice. 
Dowling \cite{Dowling1973} showed that when $G$ acts on $\AA^1_k$ linearly,
i.e via a character $G\to k^*$, the lattice $\D(G)$ is the
intersection lattice of $\A(G,\AA^1_k)$. 
In the case that $X=\C$, one considers $G=\mu_d$ the group of
$d$'th roots of unity, and the hyperplanes in
the arrangement are the reflecting hyperplanes for reflections in $\G$. Hence
these are complex reflection arrangements, and their complements are $K(\pi,1)$ spaces (see \cite{Nakamura1983})  
whose cohomology is particularly interesting.

In this subsection we consider examples of interest with varying $X$ and $G$, and
relate them to the examples of posets $\D(G,S)$ explored in Section
\ref{sec:posetexs}.

Recall one of our motivating examples: the type C arrangements, described in Example \ref{ex:typeCP} above. Generalizing on this, one can consider any algebraic group $X$ and
let $G=\Z_2$ act by the group inversion. In this case the set $S$, where inversion fails to be 
free, is the
set $X[2]$ of two-torsion points. Even more generally, one can take any finite subgroup
$G\subseteq\Aut(X)$ of algebraic group automorphisms, for which the set $S$ is
finite, and consider the resulting arrangements $\A(X,G)$.

For concreteness, take $X$ to be a complex elliptic curve.
Most elliptic curves have $\Aut(X)=\Z_2$, and so the type C
elliptic arrangements are the only ones arising as $\A(X,G)$. But when the
$j$-invariant is either 0 or 1728, extra automorphisms appear. We describe the
arrangements arising from the action of the automorphism group in the following
two examples. 

\begin{example}[\textbf{Hexagonal elliptic curve}]\label{ex:Z62}
Let $X$ be a complex elliptic curve with $j(X)=0$.
Alternatively, this is the complex torus $\C/(\Z\oplus\zeta_3\Z)$,
corresponding to a tiling of the plane by equilateral triangles.
Then the group of automorphisms is $G=\Z_6$, which we may consider as generated
by multiplication with a primitive $6$'th root of unity, $-\zeta_3$. 
The set of points where the action fails to be free is
\[S=\{e,z_1,z_2,z_3,w_1,w_2\},\]
represented in $\C$ by $e=0$, $z_1=\frac{1}{2}$,
$z_2=\frac{1}{2}\zeta_3$, $z_3=\frac{1}{2}(1+\zeta_3)$, $w_1=\frac{1}{3}(1+2\zeta_3)$,
and $w_2=\frac{1}{3}(2+\zeta_3)$.
The action of $G$ on $S$ agrees with that in Example \ref{ex:Z6}, hence the
poset of layers is the hexagonal Dowling poset given in that example above.
\end{example}

\begin{example}[\textbf{Square elliptic curve}]\label{ex:Z42}
Let $X$ be a complex elliptic curve with $j(X)=1728$, or alternatively, the complex torus $\C/(\Z\oplus i\Z)$. Then the
automorphism group $G$ is $\Z_4$, which we may consider to be generated by multiplication with the
primitive fourth root of unity $i$.
The points where the action fails to be free are the two-torsion points 
\[X[2]=\{e,z_1,z_2,t\},\]
represented in $\C$ by $e=0$,
$z_1=\frac{1}{2}$, $z_2=\frac{1}{2}i$, and $t=\frac{1}{2}(1+i)$. The group $G$
acts on these points just as it did in Example \ref{ex:Z4}, hence the resulting poset of
layers is the square Dowling poset discussed in that example above.
\end{example}

\begin{example}[\textbf{Translation by torsion points}]\label{ex:translation}
Another interesting example for $X$ an algebraic group is when $d$--torsion
points $G=X[d]$  act by translation. A specific example of this is when
$X=\C^\times$ so that $G=\Z_d$ are the $d$'th roots of unity; here we note that
$\M(\Z_d,\C^\times)=\M(\Z_d,\C)$ is Dowling's motivating example, by Remark \ref{rmk:free}.
The action of $X[d]$ on $X$ is free, and thus the poset of layers for $\A(G,X)$
is always the lattice of $G$-partitions $\D(G,\emptyset)$.
\end{example}

\subsection{Invariant arrangements}\label{sec:subarrangements}

In the above treatment, we construct an arrangement whose complement is the orbit configuration space in $X^{reg}:= X\setminus S$. Next, we consider a variant on this idea, in which one chooses to remove a different collection of points from $X$.

Let $T$ be finite a $G$-invariant subset of $X$, i.e. a finite union of $G$ orbits. Analogously to the discussion above, define an arrangement 
$\A^T(G,X)$  
in $X^n$ consisting of the subspaces:
\begin{enumerate}
\item $H_{ij}(g)$ for $1\leq i<j\leq n$ and $g\in G$, and 
\item $H_i^t$ for $1\leq i\leq n$ and $t\in T$.
\end{enumerate}
Its complement $\M^T(G,X)$  
is the orbit configuration space in $X\setminus T$. We will also be interested in the resulting poset of layers, denoted by $\P^T(G,X)$. 

Because $T$ is $G$-invariant, the arrangement $\A^T(G,X)$  
is $\G$-invariant, and hence $\G$ acts
on $\M^T(G,X)$  
as well as on $\P^T(G,X)$. 
However, note that $\G$ no longer acts freely on 
$\M^T(G,X)$ 
when $T$ does not contain $S$. 

Let us start by mentioning two motivational examples of these invariant
arrangements.

\begin{example}[\textbf{Punctured surface}]\label{ex:punctures}
Suppose that $G$ is a group acting on a Riemann surface $X$. Then if $T$ is a finite 
$G$-invariant subset, the complement of the invariant arrangement $\A^T(G,X)$ 
is the orbit configuration space of the punctured surface. As mentioned in Remark \ref{rmk:free}, 
this is a scenario in which one can benefit from studying the (orbit) configuration space 
of a punctured surface inside of the compact $X^n$ rather than the usual (non-compact) 
$(X\setminus T)^n$.
\end{example}

\begin{example}[\textbf{Types B and D}]\label{ex:BD}
Consider the case that $X=\C$, $\C^\times$, or a complex elliptic curve, and
$G=\Z_2$ acts by using the group inversion, as in Examples
\ref{ex:typeC} and \ref{ex:typeCP}.
The arrangement $\A(X,G)$, discussed in Example \ref{ex:typeCP}, naturally arises from the type C root
system, viewed as characters on a torus. The type B and D root systems also
define arrangements, which are subarrangements of the type C arrangement.
In fact, they are invariant subarrangements, where the type B arrangement 
uses $T=\{e\}\subseteq X[2]$ and in type $D$ we have $T=\emptyset$.
The poset of layers and representation stability for these subarrangements were
studied in \cite{bibby2}.
\end{example}

\begin{remark} It is important to note that while $\D(G,S)$ describes the poset of layers of
$\A(G,X)$, in general it is \emph{not} true that the poset $\D(G,T)$ describes the layers of
$\A^T(G,X)$. 

When $T\subset S$, one immediately sees that the poset $\P^T(G,X)$ 
is a subposet of $\P(G,X)$. However, it is larger than one might at first expect.
In fact, even when $T\cap S = \emptyset$, the singular set $S$ appears in
the description of layers in $\A^T(G,S)$. 
This phenomenon is explained fully next,
in Theorem \ref{thm:sublayers}.
\end{remark}

\begin{theorem}[\textbf{Removing general $T$}]\label{thm:sublayers}
Let $T$ be a finite $G$-invariant subset of 
a connected almost free $G$-space
$X$, and let 
$S = \operatorname{Sing}_G(X)$ as above.
Denote the respective sets of orbits by $\O(T)$ and $\O(S)$.
Then there is a natural equivariant embedding 
\[\P^T(G,X) 
\into \D(G,T\cup S)\]
whose image consists of all pairs $\parts$ for which $|z^{-1}(\oo)|\neq 1$ whenever
$\oo\in\O(S)\setminus\O(T)$. 
\end{theorem}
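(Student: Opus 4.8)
The plan is to factor the desired embedding through the arrangement $\A^{T\cup S}(G,X)$, which is handled by a mild generalization of Theorem \ref{thm:layers}. First I would observe that the proof of Theorem \ref{thm:layers} applies verbatim to $\A^{U}(G,X)$ for \emph{any} finite $G$-invariant $U\subseteq X$ with $S\subseteq U$: the only properties of $S$ used are the intersection relations of Lemma \ref{lem:intersect}, and these persist because the relevant singular points $X^{g^{-1}h}$ still lie in $S\subseteq U$. This gives a $\G$-equivariant isomorphism $\P^{U}(G,X)\cong\D(G,U)$; applying it with $U=T\cup S$ yields $\P^{T\cup S}(G,X)\cong\D(G,T\cup S)$, with the layer attached to $\parts\in\D(G,T\cup S)$ being the subspace $X^{\parts}\subseteq X^{n}$ constructed in that proof, namely the intersection of $\A[]\parts$.

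\textbf{The embedding.}
Since $T\subseteq T\cup S$ we have $\A^{T}(G,X)\subseteq\A^{T\cup S}(G,X)$, so every intersection of members of $\A^{T}$ is an intersection of members of $\A^{T\cup S}$, and hence every layer of $\A^{T}(G,X)$ is a layer of $\A^{T\cup S}(G,X)$. The partial order on each poset of layers is reverse inclusion of subspaces of $X^{n}$, so this inclusion of sets $\P^{T}(G,X)\into\P^{T\cup S}(G,X)$ is an order embedding, and it is $\G$-equivariant since both actions are restricted from $X^{n}$. Composing with the isomorphism $\P^{T\cup S}(G,X)\cong\D(G,T\cup S)$ produces the asserted natural equivariant embedding $\P^{T}(G,X)\into\D(G,T\cup S)$.

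\textbf{Identifying the image.}
It remains to determine for which $\parts\in\D(G,T\cup S)$ the layer $X^{\parts}$ is a connected component of an intersection of members of $\A^{T}$. For necessity, suppose $z^{-1}(\oo)=\{i\}$ is a singleton for some $\oo\in\O(S)\setminus\O(T)$, so $z(i)=s\in S\setminus T$ and $x_{i}\equiv s$ on $X^{\parts}$; if $X^{\parts}$ were a component of $\bigcap_{H\in B}H$ with $B\subseteq\A^{T}$, then no $H_{i}^{t}$ with $t\in T$ lies in $B$ (as $s\notin T$), no diagonal $H_{ij}(g)$ in $B$ touches $i$ (otherwise $x_{j}\equiv g.s$ on $X^{\parts}$, forcing $j$ either into the zero block with $z(j)\in\oo$, impossible, or into a block of $\beta$ where $x_{j}$ runs over all of $X^{\wt{B}}\cong X$ and is not constant), and hence $x_{i}$ is unconstrained on $\bigcap_{H\in B}H$ and on its components, contradicting $x_{i}\equiv s$ (here one uses that $X$ has more than one point; the case $|X|=1$ makes $S=\emptyset$ and the statement vacuous). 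For sufficiency, given $\parts$ with $|z^{-1}(\oo)|\neq1$ for every $\oo\in\O(S)\setminus\O(T)$, I would exhibit a realizing $B\subseteq\A^{T}$: all diagonals in $\A[]\parts$, all $H_{i}^{z(i)}$ with $z(i)\in T$, and for each $\oo\in\O(S)\setminus\O(T)$ meeting $z$ (so $|z^{-1}(\oo)|\geq2$) a choice of base index $i_{\oo}\in z^{-1}(\oo)$ together with, for every other $k\in z^{-1}(\oo)$, the pair $H_{i_{\oo}k}(g_{k}),H_{i_{\oo}k}(h_{k})$ where $g_{k}.z(i_{\oo})=z(k)$ and $g_{k}^{-1}h_{k}$ is a nontrivial element of the stabilizer of $z(i_{\oo})$ — such elements exist since $z(i_{\oo})\in S$ has nontrivial stabilizer and $z(k)$ lies in its orbit. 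By Lemma \ref{lem:intersect}\eqref{eq:intersect2} these pairs pin $x_{i_{\oo}}$ into a finite fixed-point set and then determine the remaining zero-block coordinates, so $\bigcap_{H\in B}H$ equals $\prod_{B'\in\beta}X^{\wt{B'}}$ times a finite set of zero-block configurations, exactly one of which gives $X^{\parts}$; thus $X^{\parts}$ is a connected component of $\bigcap_{H\in B}H$, i.e.\ a layer of $\A^{T}(G,X)$.

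\textbf{Main obstacle.}
The delicate point is the sufficiency direction of the image computation: one must choose the auxiliary diagonals carefully — this is precisely where the singularity of the points of $S$ enters, via their nontrivial stabilizers — and then check that, although $\bigcap_{H\in B}H$ genuinely disconnects, the component of it containing $X^{\parts}$ is no larger than $X^{\parts}$. The remaining steps are bookkeeping built on the correspondence already established in Theorem \ref{thm:layers}.
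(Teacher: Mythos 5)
Your proposal is correct and follows essentially the same route as the paper: extend Theorem \ref{thm:layers} to the larger arrangement $\A^{T\cup S}(G,X)$, realize $\P^T(G,X)$ inside it via the inclusion $\A^T(G,X)\subseteq\A^{T\cup S}(G,X)$, and characterize the image by the singleton condition on $z^{-1}(\oo)$, with the necessity direction argued exactly as in the paper. Your sufficiency step is in fact spelled out more completely than the paper's, which only intersects the block diagonals with the point conditions from $\O(T)$ and gestures at Lemma \ref{lem:intersect}\eqref{eq:intersect2}; your explicit choice of parallel diagonal pairs $H_{i_\oo k}(g_k)$, $H_{i_\oo k}(h_k)$ with $g_k^{-1}h_k$ a nontrivial stabilizer element of $z(i_\oo)$ is precisely the missing ingredient that pins the $S\setminus T$--colored zero-block coordinates to a finite set and exhibits $X^{\parts}$ as a genuine connected component of an intersection from $\A^T(G,X)$.
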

\begin{proof}
Given $\parts\in\D(G,T\cup S)$, one can construct the layer $X^{\parts}$ of the arrangement $\A^{T\cup S}(G,X)$ 
as in the proof of Theorem \ref{thm:layers}: by intersecting the
subspaces in the collections
\[\A[](\wt{B}):=\{H_{ij}(g)\ |\ i,j\in B, gb(i)=b(j)\}\]
\[\A[](\oo):=\{H_i^{z(i)}\ |\ i\in Z,z(i)\in\oo\}.\] 
We therefore only need to show that if $|z^{-1}(\oo)|\neq1$ for all
$\oo\in\O(S)\setminus\O(T)$, then $X^{\parts}$ 
is in fact a layer of the subarrangement  $\A^T(G,X)$. 

Consider the intersection of subspaces in $\A[](B)$ along with only $\A[](\oo)$ for
orbits $\oo\in\O(T)$. Its connected components are layers of $\A^T(G,X)$,  
by definition. Furthermore,
one of those connected components is indeed  $X^{\parts}$. This last claim
follows from Lemma \ref{lem:intersect}\eqref{eq:intersect2}.

Conversely, consider a layer $Y$ of the arrangement $\A^T(G,X)$  
and the
corresonding pair $(\wt{\beta}_Y,z_Y)\in\D(G,T \cup S)$ from the proof of Theorem
\ref{thm:layers}. We need to show that $|z_Y^{-1}(\oo)|=1$ implies
$\oo\in\O(T)$. Suppose $i\in\n$ is the single element for which
$z_Y(i)\in\oo$. Then there cannot exist another $j\in\n$ and $g\in G$ for which 
$H_{ij}(g)\supseteq Y$, since otherwise Lemma \ref{lem:intersect}\eqref{eq:intersect2} would imply that $z_Y(j)\in \oo$ as well. But then the only way $Y$ can be a layer of $\A^T(G,X)$ 
is if $H_i^{z_Y(i)}\in \A^T(G,X)$, 
which implies $\oo\in\O(T)$.
\end{proof}

Next we discuss the intervals of the arising posets of layers. Theorem \ref{thm:intervals} above shows that closed intervals in $\D(G,S)$ are
products of Dowling and partition lattices. An analogue of this statement is
true for the subposets in question, for which closed intervals are again geometric lattices.
We prove this characterization of intervals in Theorem \ref{thm:sublayers} below, but before we can do that we must
first revisit the case of the Dowling lattice.

\begin{definition}[\textbf{The posets $\DD(G)$}]
Let $\DD(G)$ be the subposet of $\D(G)$ consisting of partial partitions whose
zero block is \emph{not} a singleton.
\end{definition}

Note that $\DD(G)$ is precisely the subposet of $\D(G,\{0\})$ which corresponds 
to  
$T=\emptyset$ in Theorem \ref{thm:sublayers}.
When $\DD(G)$ is a geometric lattice, intervals
built out of these lattices are just as well-behaved as those of our $S$-Dowling posets.
Furthermore, one can extend the description of atoms to this context (Lemma
\ref{lem:atoms}), functoriality (Proposition \ref{prop:functoriality}), and
$\G$--orbits (Theorem \ref{thm:orbits}), but we omit such details. In particular, the $\G$-orbits of $\P^T(G,X)$  
are indexed by $\O(T\cup S)$-labeled partitions $\lambda$ of $n$ for
which $\lambda_\oo\neq1$ whenever $\oo\in\O(S)\setminus\O(T)$.

It remains to determine when the poset $\DD(G)$ is a geometric lattice, which we do in Proposition \ref{prop:geometriclattice} next. 
We note that $\DD(G)$ is not necessarily a geometric lattice when $G$ is trivial, as it fails to be atomic in general. However, we will see in Theorem \ref{thm:subintervals} that this potential difficulty never arises in applications: the stabilizer of any point $s\in\operatorname{Sing}_G(X)$ is nontrivial by definition.

\begin{proposition} \label{prop:geometriclattice}
If $G$ is a nontrivial group, then the poset $\DD(G)$ is a geometric lattice.
\end{proposition}
\begin{proof}
First, observe that if $\alpha$ covers $\beta$ in $\DD(G)$, then $\alpha$ already covers $\beta$ in $\D(G)$. Otherwise there would be some $\gamma\in\D(G)$ with a singleton zero block such that $\beta<\gamma<\alpha$ in $\D(G)$. Since $\gamma<\alpha$ and $\alpha\in\DD(G)$, $\alpha$ has a zero block $Z$ with $|Z|>1$; and since $\gamma>\beta$ with $\beta\in\DD(G)$, $\beta$ has an empty zero block. Thus the relation $\beta < \alpha$ implies that there is a collection of blocks $\{\wt{B}_1,\ldots,\wt{B}_k\}$ in $\beta$ with $k>1$ whose union is $Z$. Merging these blocks into a single one $\wt{B} = \cup \wt{B}_i$, we find an intermediate element $\beta<\alpha\cup\{\wt{B}\}<\alpha$ in $\DD(G)$, contradicting the assumption that $\alpha$ covers $\beta$. This implies that the rank function on $\D(G)$ restricts to a rank function on $\DD(G)$.

To see that $\DD(G)$ is a lattice, consider $\alpha,\beta\in\DD(G)$. Since $\DD(G)$ is a subposet of $\D(G)$, this pair has a least upper bound $\alpha\vee \beta$ and greatest lower bound $\alpha\wedge \beta$ in $\D(G)$.
Since the zero block of $\alpha\vee\beta$ is the union of the zero blocks of $\alpha$ and $\beta$, it follows that $\alpha\vee\beta$ is already the least upper bound in $\DD(G)$.
If $\alpha\wedge\beta$ does not have a singleton zero block, it is the greatest lower bound in $\DD(G)$ as well. Otherwise, if $\alpha\wedge\beta$ has a singleton zero block $\{i\}$, then $(\alpha\wedge\beta)\cup\{i_e\}$ is the greatest lower bound of $\alpha$ and $\beta$ in $\DD(G)$. Note that this element has rank one less than that of $\alpha\wedge\beta$, and so the rank function on $\DD(G)$ is semimodular.

Finally, to see that $\DD(G)$ is atomic (that is, every element is the least upper bound for some set of atoms), we will use our notation of atoms from Lemma \ref{lem:atoms}. 
By our product decomposition of intervals in Theorem \ref{thm:intervals}, we need only consider the case that $\beta\in \DD(G)$ is the empty partition and has only a zero block $Z=\{z_1,\dots,z_\ell\}$. In this case, $\beta$ is the least upper bound of the atoms $\alpha_{ij}(g)$ ranging over $g\in G$ and $1\leq i<j\leq \ell$, since $G$ is not the trivial group.
\end{proof}

\begin{theorem}[\textbf{Local structure for general $T$}]\label{thm:subintervals}
Let $T$ be a finite $G$-invariant subset of $X$, and let $\D^T(G,S)$  
be the 
subposet of $\D(G,T\cup S)$ consisting of pairs $\parts$ for which $|z^{-1}(\oo)|\neq
1$ whenever $\oo\in\O(S)\setminus\O(T)$. 

Then for every $\parts\in\D^T(G,S)$,  
\[\D^T(G,S)_{\leq\parts}  
\cong
\prod_{B\in\beta} \Q_B \times 
\prod_{\oo\in\O(T)} \D[z^{-1}(\oo)](G_\oo) \times
\prod_{\oo\in\O(S)\setminus\O(T)} \DD[z^{-1}(\oo)](G_\oo).\]
\end{theorem}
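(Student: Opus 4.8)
The plan is to deduce Theorem~\ref{thm:subintervals} from the already-established structure of lower intervals in $\D(G,T\cup S)$, namely Theorem~\ref{thm:intervals}\eqref{eq:lowerint}, by understanding exactly which summands survive when we pass to the subposet $\D^T(G,S)$. First I would fix $\parts\in\D^T(G,S)$ and observe that, as a set, $\D^T(G,S)_{\leq\parts}$ is the subset of $\D(G,T\cup S)_{\leq\parts}$ consisting of those $(\wt\alpha,z_\alpha)\leq\parts$ that \emph{also} satisfy the constraint $|z_\alpha^{-1}(\oo)|\neq 1$ for every $\oo\in\O(S)\setminus\O(T)$; this is because the order on $\D^T(G,S)$ is the restriction of the order on $\D(G,T\cup S)$, and being $\leq\parts$ in the subposet is the same as being $\leq\parts$ in the ambient poset \emph{and} lying in the subposet. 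So the task reduces to tracking this constraint through the isomorphism of Theorem~\ref{thm:intervals}\eqref{eq:lowerint}.

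Next I would recall that isomorphism explicitly: it sends $(\wt\alpha,z_\alpha)\leq\parts$ to the tuple $((\alpha_B)_{B\in\beta},(\wt\alpha_\oo)_{\oo\in\O(T\cup S)})$, where $\alpha_B=\{A\in\alpha : A\subseteq B\}$ is a partition of $B$ and $\wt\alpha_\oo$ is a partial $G_\oo$-partition of $z_\beta^{-1}(\oo)$ (with a chosen recoloring). The key point is that the zero block of the partial partition $\wt\alpha_\oo$, as an element of $\D[z_\beta^{-1}(\oo)](G_\oo)$, is precisely $z_\beta^{-1}(\oo)\setminus(\text{union of blocks of }\alpha_\oo)$, which is exactly the set of $i\in z_\beta^{-1}(\oo)$ that land in the zero block $Z_\alpha$ of $\alpha$; in other words it is $z_\alpha^{-1}(\oo)$. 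Therefore the constraint $|z_\alpha^{-1}(\oo)|\neq 1$ translates, factor by factor, into: the zero block of $\wt\alpha_\oo\in\D[z_\beta^{-1}(\oo)](G_\oo)$ is not a singleton — which is exactly the condition cutting out $\DD[z_\beta^{-1}(\oo)](G_\oo)$ inside $\D[z_\beta^{-1}(\oo)](G_\oo)$. Meanwhile, for $\oo\in\O(T)$ there is no constraint, so the $\D[z_\beta^{-1}(\oo)](G_\oo)$ factor is unchanged, and the partition-lattice factors $\Q_B$ are entirely untouched since the constraint only concerns the zero-block data. (Here one should note $z_\beta^{-1}(\oo)=z^{-1}(\oo)$ in the statement's notation, with $\parts=(\wt\beta,z)$.) Assembling, the isomorphism of Theorem~\ref{thm:intervals}\eqref{eq:lowerint} restricts to the claimed isomorphism
\[
\D^T(G,S)_{\leq\parts}\;\cong\;\prod_{B\in\beta}\Q_B\times\prod_{\oo\in\O(T)}\D[z^{-1}(\oo)](G_\oo)\times\prod_{\oo\in\O(S)\setminus\O(T)}\DD[z^{-1}(\oo)](G_\oo).
\]

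Two small verifications remain. One must check that the restricted map is still order-preserving with order-preserving inverse — but this is automatic, since both sides carry subposet orders and the bijection and its inverse are the restrictions of maps already known to be order isomorphisms. One must also confirm that the restriction genuinely lands in (and surjects onto) the asserted product, i.e. that a tuple in the product corresponds under the ambient isomorphism to an element $(\wt\alpha,z_\alpha)$ that actually satisfies all the defining constraints of $\D^T(G,S)$ \emph{globally} (for every orbit simultaneously), not just for the orbit being examined; this is immediate because the constraints for distinct orbits involve disjoint pieces $z^{-1}(\oo)$ of the data. I expect the main obstacle — really the only subtlety — to be the bookkeeping identification of the zero block of the $\oo$-component $\wt\alpha_\oo$ with $z_\alpha^{-1}(\oo)$, and in particular being careful that the recoloring step in the proof of Theorem~\ref{thm:intervals}\eqref{eq:lowerint} (which conjugates $G$-colorings into $G_\oo$-colorings) does not disturb which elements lie in the zero block. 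Once that correspondence is pinned down, the rest is a direct restriction of the already-proven isomorphism.
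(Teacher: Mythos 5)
Your proposal is correct and follows essentially the same route as the paper: restrict the isomorphism of Theorem~\ref{thm:intervals}\eqref{eq:lowerint} to the subposet and observe that the zero block of the $\oo$-component $\wt\alpha_\oo$ is exactly $z_\alpha^{-1}(\oo)$, so the constraint $|z_\alpha^{-1}(\oo)|\neq 1$ cuts out precisely the $\DD$ factor for each $\oo\in\O(S)\setminus\O(T)$. Your write-up is in fact more careful than the paper's (which omits the explicit identification of the zero blocks and the check that constraints on distinct orbits are independent), but the underlying argument is identical.
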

\begin{proof}
Recall from Theorem \ref{thm:intervals}\eqref{eq:lowerint} that the
decomposition of the interval under $\parts$ in $\D(G,S)$ assigns to some
$(\wt{\alpha},z_\alpha)$ a pair of tuples
$((\alpha_B)_{B\in\beta},(\alpha_\oo)_{\oo\in\O(S)})$. If we require
$|z_\alpha^{-1}(\oo)|\neq1$ whenever $\oo\in\O(S)\setminus\O(T)$, then for 
$\oo\in\O(S)\setminus\O(T)$ the zero block of $\alpha_\oo$ could not be a
singleton.
Thus, the image of $\D(G,S;T)_{\leq \parts}$ under the isomorphism would be the
product decomposition stated in the theorem. 
\end{proof}

Using the notation $\D^T(G,S)$, the product decomposition in Theorem \ref{thm:subintervals} above can be written more succinctly as
\[\D^T(G,S)_{\leq\parts} \cong
\prod_{B\in\beta} \Q_{B} \times 
\prod_{\oo\in\O(S)} \D[{z^{-1}(\oo)}]^{T\cap\{s_\oo\}}(G_\oo,\{s_\oo\}),\]
where for each orbit $\oo$ we fix a representative $s_\oo$. 
In the sequel paper \cite{BG2}, the authors work in this greater generality and simplify notation by using $S$ to mean $S\cup T$ rather than just the set of singular points.

\subsection{Local arrangements}\label{sec:localarr}

This subsection will focus on the local structure of the arrangement $\A(G,X)$.
For this purpose, take the space $X$ to be a \emph{smooth} manifold or variety.
One way to understand what we mean by the local structure of $\A(G,X)$
is to consider a `scanning' procedure:
studying the germ of $\A(G,X)$ at every point $p\in X^n$. The germs 
are what appear in the Leray spectral sequence for the inclusion $\M(G,X)\into X^n$; 
see Theorem \ref{thm:ss} below.

Since each element $\parts\in \D(G,S)$
corresponds to a subspace $X^{\parts}\subseteq X^n$, the incidence relation attaches to 
every point $p\in X^n$ a subposet 
\[
\D(G,S)_p := \{ \parts\in \D(G,S) \mid p\in X^{\parts}\}.
\]
This is exactly the subposet of those layers that meet every neighborhood of $p$.

The collection of layers in $\D(G,S)_p$ is clearly closed under intersection,
and thus has a maximum $(\wt{\beta}_p,z_p)$.
It is also downward-closed (an order ideal), and is therefore the interval $[\zero,(\wt{\beta}_p,z_p)]$ described by Theorem \ref{thm:intervals}.
Geometrically, this characterizes the germ of $\A(G,X)$ at $p$ in $X^n$: it is
well known that when $X$ is a smooth manifold, the restriction of $\A(G,X)$ to a small ball centered at $p$ is isomorphic to a \emph{linear
subspace arrangement} $\A[p]$, whose intersection poset is the interval $[\zero,
(\wt{\beta}_p,z_p)]$. One can see this, e.g. by choosing a Riemannian metric on $X$ and using the exponential map to identify a neighborhood of $p$ with the tangent space $T_pX$ and the linear arrangement therein.

Theorem \ref{thm:intervals} thus translates to the following, 
\begin{theorem}[\textbf{Local arrangements}]\label{thm:localarr} 
For every $p\in
X^n$ the complement of the local arrangement $\A[p]$ is isomorphic to a product
of (free) orbit configuration spaces of points in $\R^d$.

Equivalently, the
restriction of $\Conf_n^G(X\setminus S)$ to any sufficiently small ball is
isomorphic to a product of such orbit configuration spaces.
\end{theorem}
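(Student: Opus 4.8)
The plan is to reduce Theorem~\ref{thm:localarr} to the combinatorial decomposition already established in Theorem~\ref{thm:intervals}\eqref{eq:lowerint}, together with the standard fact that a smooth subvariety arrangement is locally linear. First I would fix a point $p=(p_1,\ldots,p_n)\in X^n$ and record the maximal element $(\wt{\beta}_p,z_p)\in\D(G,S)_p$; as noted in the text, $\D(G,S)_p=[\zero,(\wt{\beta}_p,z_p)]$. Geometrically, two coordinates $i,j$ lie in the same block of $\beta_p$ precisely when $p_j=g.p_i$ for some $g\in G$ (and then $g$ is forced to be unique since $p_i,p_j\notin S$), while $i$ lies in the zero block with color $z_p(i)=p_i$ exactly when $p_i\in S$. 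The coloring $b$ on a block is read off from the transporters $g$, and the relevant stabilizers are the $G_\oo$ for $\oo\in\O(S)$ appearing among the $p_i$.

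Next I would invoke local linearity: choosing a Riemannian metric (or in the algebraic category an \'etale-local analytic chart / the formal completion), the exponential map identifies a small ball around $p$ with a ball in $T_pX=(\R^d)^n$, carrying $\A(G,X)$ to a linear subspace arrangement $\A[p]$ whose intersection poset is $[\zero,(\wt{\beta}_p,z_p)]$. Since a linear subspace arrangement is determined up to isomorphism of complements by finer data than its poset in general, the key point I would make explicit is that the local equations are themselves \emph{linearizations of the defining equations} $g.x_i=x_j$ and $x_i=s$: differentiating $g.x_i=x_j$ at $p$ gives $(dg)_{p_i}(v_i)=v_j$, a linear isomorphism between the tangent factors; and differentiating $x_i=s$ at $p_i=s$ gives $v_i=0$. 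Thus on each block $B\in\beta_p$ the local arrangement imposes a chain of linear isomorphisms identifying all the tangent factors $T_{p_i}X\cong\R^d$, reducing that block's contribution to the linearization of a \emph{braid-type} arrangement (i.e.\ the $G'$--orbit arrangement with $G'$ trivial, or rather with the finite group $dG_{p_i}\le GL_d$ acting); on each orbit class in the zero block the factors $T_{p_i}X$ are pinned to $0$ but the residual linear constraints among those coordinates reproduce the Dowling arrangement for the stabilizer $G_\oo$ acting linearly on $\R^d$.

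Putting this together: the local complement splits as a product over the blocks of $\beta_p$ and the orbit-classes in the zero block, each factor being the complement of a linear orbit-type arrangement in a power of $\R^d$, hence a (free) orbit configuration space of points in $\R^d$ for an appropriate finite linear group (trivial group for the $\Q_B$ factors, $G_\oo$ for the Dowling factors). The shape of this product is exactly the right-hand side of Theorem~\ref{thm:intervals}\eqref{eq:lowerint}, which is the content of the statement. The main obstacle I anticipate is the second paragraph's claim that the \emph{linearized} equations still cut out orbit configuration spaces and not merely an arrangement with the correct poset: one must check that for a finite group acting smoothly and almost freely, the derivative action on the tangent space at a singular point is again almost free (indeed the stabilizer acts faithfully and linearly, so freely away from the origin) and that the off-diagonal linear isomorphisms $(dg)_{p_i}$ compose consistently around each block, so that the linear model is genuinely $\Conf^{G_\oo}_{\bullet}(\R^d)$ up to the usual diagonal translation. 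This is where the smoothness hypothesis on $X$ is essential, and where one can afford to be slightly informal by citing the standard description of germs of smooth subvariety arrangements.
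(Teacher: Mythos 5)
Your proposal follows essentially the same route as the paper: identify the local poset with the interval $[\zero,(\wt{\beta}_p,z_p)]$, linearize via the exponential map, and read off the product decomposition from Theorem~\ref{thm:intervals}\eqref{eq:lowerint}. In fact you supply more detail than the paper does on the one genuinely delicate point---that the linearized equations cut out actual orbit configuration spaces for the derivative actions of the stabilizers $G_\oo$, which act freely away from the origin---so the argument is correct and, if anything, more complete than the published sketch.
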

\begin{remark}
This observation is striking for the following two reasons:
\begin{enumerate}
\item The local picture involves orbit configuration spaces for groups 
\emph{different} from $G$, and possibly different actions on $\R^d$ at 
every point.
\item  One could not have avoided the difficulty of this description by 
removing a set of `bad' points, as is typically done with non-free actions.
This is since neighborhoods of these points record their bad behavior. The 
entire description of our posets $\D(G,S)$ was necessary, and before this 
work, the local structure described in Theorem \ref{thm:localarr} was 
generally unknown. This is while a description like above is rather 
important and comes up in applications, e.g. using the Leray spectral 
sequence for $\M(G,X)\into X^n$ for complete $X$.
\end{enumerate}
\end{remark}

Let $\M[p]$ denote the complement of $\A[p]$ inside a small open ball. 
Since $\A[p]$ is a linear subspace arrangement, the work of
Goresky--MacPherson \cite{GM} ties together the cohomology of $\M[p]$ with the
Whitney homology of the interval:
\begin{equation}\label{eq:GM}
\Ho^{(d-1)*}(\M[p]) \cong \WH_{*}(\zero,(\wt{\beta}_p,z_p)).
\end{equation}

\begin{remark}[\textbf{Realizability}]\label{rmk:realizable}
A central question in matroid theory is whether a geometric lattice is realizable by
an arrangement of hyperplanes over a field. Dowling \cite{Dowling1973}
completely settled this for his lattices: he showed that $\D(G)$ is
realizable over $\C$ if and only if $G$ is cyclic, and changing the field
amounts to putting restrictions on which cyclic groups are allowed.

Making contact with our local arrangements, one observes that if
$\D(G,S)$ is the poset of layers for an arrangement of
hypersurfaces in a complex manifold, then each interval, as described in
Theorems \ref{thm:intervals} and \ref{thm:subintervals}, is realizable over
$\C$. Similarly, if $\A$ is an arrangement of smooth hypersurfaces in some smooth algebraic variety over a field $k$, then the tangent spaces to the arrangement at various points are realizations of intervals over $k$.

In particular, Dowling's realizability result implies the following:
\begin{corollary}[\textbf{Restriction on possible stabilizers}]
If an arrangement of
hypersurfaces in a variety $M$ has poset of layer $\cong \D(G,S)$, then the stabilizer subgroup $G_s$ for every $s\in S$ must be
cyclic.
\end{corollary}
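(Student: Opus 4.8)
The plan is to deduce the statement from Theorem~\ref{thm:intervals} together with Dowling's classification of which of his lattices are coordinatizable over a field. Fix $s\in S$; since the stabilizers of points in one $G$--orbit are conjugate (hence simultaneously cyclic or not), we may take $s$ to be the chosen representative $s_\oo$ of its orbit $\oo=G.s$, so that $G_\oo=G_s$. I would first dispose of the degenerate range $n\leq 2$, where $\D(G,S)$ retains no information about the group structure of its stabilizers — the relevant intervals are then geometric lattices of rank at most $2$, coordinatizable over any large enough field independently of $G_s$ — so the claim is vacuous there; assume henceforth $n\geq 3$.

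The core of the argument is to locate a copy of the Dowling lattice $\D[n](G_s)$ as a closed interval of $\D(G,S)$ and to use the hypothesis to realize it linearly. For the first point, take $\parts\in\D(G,S)$ with empty partial partition and with zero block all of $\n$, coloured by the constant function $z\equiv s$. Then Theorem~\ref{thm:intervals}\eqref{eq:lowerint} identifies $[\zero,\parts]$ with $\D[n](G_s)$ on the nose: there are no partition factors because $\beta=\emptyset$, the orbit $\oo$ contributes the factor $\D[z^{-1}(\oo)](G_\oo)=\D[n](G_s)$, and every other orbit contributes a trivial one-element factor. For the second point, let $Y\subseteq M$ be the layer corresponding to $\parts$; since $\parts$ is a maximal element of $\D(G,S)$, the layer $Y$ is minimal under inclusion, so at a generic smooth point $p\in Y$ the only layers through $p$ are those indexed by elements $\leq\parts$, and the germ of the arrangement at $p$ is — as in the discussion preceding this corollary — a central linear hyperplane arrangement over $k$ whose intersection lattice is precisely $[\zero,\parts]\cong\D[n](G_s)$. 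Thus $\D[n](G_s)$ is realizable over $k$, and since $n\geq 3$, Dowling's theorem \cite{Dowling1973} forces $G_s$ to embed into the multiplicative group $k^{\times}$; a finite subgroup of $k^{\times}$ is cyclic, so $G_s$ is cyclic. As $s$ was arbitrary, this gives the claim for every $s\in S$.

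The step I expect to need the most care is passing from the hypersurface arrangement to a genuine hyperplane arrangement realizing the interval over $k$ — that is, verifying that at a generic point $p\in Y$ the tangent hyperplanes of the hypersurfaces through $Y$ are pairwise distinct and that their intersection poset does not collapse but coincides with $[\zero,\parts]$. This is the linearization input taken for granted in the remark above (and which, for the arrangements $\A(G,X)$, is supplied by the local analysis of \S\ref{sec:localarr}); to make it rigorous one would spell out that the equality of codimension with poset rank in $\P(G,X)\cong\D(G,S)$ precludes the relevant tangencies and degenerations among the hypersurfaces meeting along $Y$. Everything else — producing the interval, invoking Dowling's realizability theorem, and the cyclicity of finite multiplicative subgroups of a field — is routine.
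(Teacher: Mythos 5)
Your proposal is correct and follows essentially the same route as the paper: the corollary appears there as an immediate consequence of the local-arrangement discussion (the germ of the arrangement at a point of the minimal layer $X^{\parts}$ with $\parts=[\emptyset||1_s\cdots n_s]$ is a linear arrangement realizing $[\zero,\parts]\cong\D[n](G_s)$ over $k$) combined with Dowling's realizability theorem. Your extra care about the degenerate range $n\leq 2$ and about the tangent-space linearization being genuinely nondegenerate is a welcome sharpening of details the paper leaves implicit, but it is not a different argument.
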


By considering $\A(G,X)$ we get, in particular, that if $X$ is any Riemann surface or an algebraic variety over a field $k=\bar{K}$ with an almost free $G$ action, then the stabilizer of any point $x\in X$ must be cyclic.

We wonder about the converse of this:
\begin{question} Suppose that $S$ is a $G$--set for which the
stabilizer subgroups are all cyclic. When can one find an arrangement of
hypersurfaces in a complex manifold whose poset of layers is $\cong \D(G,S)$?

Furthermore, if such an arrangement exists, must it be of the form $\A(G,X)$?
\end{question}
\end{remark}

\subsection{Cohomology of the complement}\label{sec:specseq}

This subsection is devoted to computing $\Ho^*(\M(G,X))$ as explicitly as possible. In 
the sequel to this paper \cite{BG2}, 
we will apply ideas from representation stability to analyze the sequence of $\G$--representations $\Ho^i(\M(G,X))$ as $n$ varies. Recall from the introduction our running assumption on coefficients: When $X$ is a CW complex $\Ho^*(\bullet)$ denotes singular cohomology with coefficients in any ring, and when $X$ is an algebraic variety it denotes $\ell$-adic cohomology with coefficients in either $\Z_\ell$ or $\QQ_\ell$.

In the case of where $G$ acts linearly on $X=\AA^d$, the cohomology of the complement $\M(G,X)$ can be
described in a purely combinatorial way, as the Whitney homology of the
intersection poset as we saw in \eqref{eq:GM}, by the work of Goresky--MacPherson \cite{GM}.
In general, though, there is a spectral sequence converging to 
$\Ho^*(\M(G,X))$, which combines the cohomology of the ambient space 
$X^n$ with the Whitney homology of the poset of layers. 
This spectral sequence 
can be realized as the Leray spectral sequence for the inclusion 
$\M(G,X)\into X^n$ in some cases. We refer the reader to a 
clever construction by Petersen, which has appeared in various 
special cases beforehand but applies in the general context -- 
see \cite{Petersen2017}.

The poset of layers $\P(G,X)$ gives rise to a stratification of the space $X^n$, and
since intervals in the poset are Cohen--Macaulay, we can use Example 3.10 in \cite{Petersen2017} to simplify the $E_1$
page of the spectral sequence. Moreover, in the case that $X$ is projective, one can use the pure Hodge
structure to conclude that most differentials must vanish, similar to Totaro's argument in \cite{Totaro1996}.
It is also worth noting that one could apply Verdier or Poincar\'e duality to
the following theorem to obtain a sequence more akin to that of
Totaro, avoiding compactly supported cohomology. 
\begin{theorem}[\textbf{Spectral sequence for $\Ho^*(\M(G,X))$}]\label{thm:ss} 
There is a spectral sequence
\[E_1^{pq} = 
\bigoplus_{\substack{\parts\in\D(G,S)\\ \rk\parts=p}}  
\wt{\Ho}{}^{p-2}(\zero,\parts)\otimes
\Ho^q_c(X^{\parts}) \Longrightarrow \Ho_c^{p+q}(\M(G,X)).\]
When $X$ is a smooth projective variety, this sequence degenerates at 
the $E_2$ page.
\end{theorem}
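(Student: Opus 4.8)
The plan is to assemble the spectral sequence from the combinatorial analysis of Section \S\ref{sec:combinatorics} together with Petersen's general machinery for complements of arrangements of closed subspaces, and then to run a weight argument to kill the higher differentials in the projective case. First I would invoke the stratification of $X^n$ by the poset of layers $\P(G,X)\cong\D(G,S)$ from Theorem \ref{thm:layers}: the open stratum is $\M(G,X)$ and the closed strata are the layers $X^{\parts}$. Petersen's construction (see \cite{Petersen2017}, and the earlier incarnations in \cite{Totaro1996,bibby1,dupont}) produces a spectral sequence computing $\Ho^*_c(\M(G,X))$ whose $E_1$ page is built from the compactly supported cohomology of the closed strata, twisted by the reduced cohomology of the order complexes of the open intervals below each stratum. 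Because each interval $[\zero,\parts]$ in $\D(G,S)$ is a geometric lattice — this is exactly Theorem \ref{thm:intervals}, and is the reason the poset is ``close to being'' Cohen--Macaulay — the relevant local cohomology is concentrated in a single degree, namely $\wt{\Ho}{}^{\rk\parts - 2}(\zero,\parts)$, by the Folkman--Goresky--MacPherson description of the homotopy type of geometric lattices. This collapses Petersen's general $E_1$ term (which a priori is a double complex indexed by chains in the poset) to the simple direct sum displayed in the statement, exactly as in Example 3.10 of \cite{Petersen2017}; the indexing by rank $p=\rk\parts$ comes from the fact that $X^{\parts}$ has codimension proportional to $\rk\parts$ and that the local cohomology sits in degree $p-2$. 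I would also record that $\G$ acts on everything compatibly, so the whole spectral sequence is $\G$-equivariant.

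The second half is the degeneration claim. Assume $X$ is a smooth projective variety. Then each layer $X^{\parts}$ is itself smooth and projective: indeed, by the explicit description in the proof of Theorem \ref{thm:layers}, $X^{\parts}$ is isomorphic to a product $X^{\wt B_1}\times\cdots\times X^{\wt B_\ell}\times X^z$ where each $X^{\wt B_i}\cong X$ (cut out by equations $b(i)^{-1}.x_i = b(j)^{-1}.x_j$, which define a graph of automorphisms and hence a smooth subvariety isomorphic to $X$) and $X^z$ is a point. Therefore $\Ho^q_c(X^{\parts}) = \Ho^q(X^{\parts})$ is pure of weight $q$. The local term $\wt{\Ho}{}^{p-2}(\zero,\parts)$ is the reduced cohomology of a finite simplicial complex and carries weight zero. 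So the bidegree $(p,q)$ piece of $E_1$ is pure of weight $q$. A differential $d_r$ on a spectral sequence page has bidegree $(r, 1-r)$, so $d_r\colon E_r^{p,q}\to E_r^{p+r,\,q+1-r}$ maps a class of weight $q$ to a subquotient of something of weight $q+1-r$. Since all differentials are morphisms of mixed Hodge structures (respectively Galois representations in the $\ell$-adic setting) — this is the standard functoriality of weights, available because the spectral sequence arises geometrically — they must vanish on weight grounds for every $r\geq 2$: $q\neq q+1-r$ once $r\geq 2$. Hence the sequence degenerates at $E_2$, as claimed. (For $r=1$ the weights do match, $q$ and $q$, so the $E_1$ differential survives, consistent with the fact that there can be one nonzero differential as flagged in the introduction.)

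The main obstacle I anticipate is not the weight argument — that is routine once purity of the strata is in hand — but rather verifying that Petersen's hypotheses genuinely apply in our setting and that his $E_1$ page simplifies precisely as stated. Concretely, one must check: (i) that $\D(G,S)$ is Cohen--Macaulay, or at least Cohen--Macaulay ``enough'' for Example 3.10 of \cite{Petersen2017} — here I would lean on the fact that every interval is a geometric lattice (Theorem \ref{thm:intervals}) so that $\wt{\Ho}{}^i(\zero,\parts)=0$ for $i\neq p-2$, which is exactly the acyclicity input his simplification requires, even though the full poset with a top element adjoined is only conjecturally shellable (Conjecture \ref{conj:shellable}, now a theorem of Paolini \cite{paolini}); (ii) that every layer really is a connected smooth closed subvariety meeting the ambient space nicely — both follow from the proof of Theorem \ref{thm:layers}; and (iii) bookkeeping the degree shift so that the local term lands in cohomological degree $p-2$ against a stratum of codimension contributing the correct amount to the total degree $p+q$. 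None of these is deep, but getting the indexing conventions to line up with \cite{Petersen2017} is where the care is needed. I would also remark, following the statement, that applying Verdier/Poincaré duality converts this into the more familiar Totaro-style sequence computing ordinary cohomology, which is the form used in the sequel \cite{BG2}.
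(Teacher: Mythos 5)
Your proposal follows essentially the same route as the paper: invoke Petersen's construction for the stratification of $X^n$ by the layers $X^{\parts}$, use Theorem \ref{thm:intervals} (each interval $[\zero,\parts]$ is a geometric lattice, so its reduced cohomology is concentrated in degree $\rk\parts-2$ by Folkman) to collapse the $E_1$ page to the stated form, and then kill the differentials $d_r$ for $r\geq 2$ by purity of $\Ho^q_c(X^{\parts})$ for the smooth projective layers, exactly as in Totaro's weight argument. The details you supply — that each layer is a product of graphs of automorphisms, hence smooth projective and isomorphic to a power of $X$, and that only $d_1$ can survive the weight comparison — are the correct fleshing-out of what the paper leaves implicit.
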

The main contributions of this theorem to Petersen's general construction are the following:
\begin{itemize}
\item Every layer $\parts$ contributes exactly one term to $E_1^{pq}$ with homological degrees matching $(p,q)$, as opposed to summing over all combinations $\Ho^i\otimes \Ho^j_c$ with $i+j+2 = p+q$.
\item One gets control over weights. For example, when $X$ is a smooth complex projective variety then 
$E_1^{}$ will have pure weight.
\item Our Theorem \ref{thm:intervals} gives an exact description of $[\zero,\parts]$ and its cohomology.
\end{itemize}

Next, we use our combinatorial formula for the characteristic polynomial in Theorem
\ref{thm:charpoly} to compute the Hilbert series for the above $E_1$ page, 
as we did for the Whitney homology in Theorem \ref{thm:WhitneyHilbert}.
Recall that a specialization of the Hilbert series
($t=u=-1$) computes the Euler characteristic for the complement $\M(G,X)$.
Note that our formula only works for the arrangements $\A(G,X)$, and not the
invariant subarrangements discussed in Subsection \ref{sec:subarrangements}.
Unfortunately, the characteristic polynomial for the subposets 
$\D^T(G,S)$ 
does not in general 
have integer roots. 

\begin{proposition}[\textbf{Hilbert series and Euler number}]\label{prop:poincare-poly}
The Hilbert series of the $E_1$ term of Theorem
\ref{thm:ss} is
\[ \sum_{p,q} (\dim E_1^{pq}) t^pu^q 
= \prod_{i=0}^{n-1} \left(P_c(X;u) + (|S|+|G|i)t\right), \]
where $P_c(X;u)$ is the compactly supported Poincar\'e polynomial of $X$.
\end{proposition}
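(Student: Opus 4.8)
The plan is to compute the Hilbert series by collapsing the two directions of the bigrading separately --- first the internal degree $q$, using the explicit shape of the layers, then the rank $p$, using the Whitney homology count that already underlies Theorem \ref{thm:charpoly}. The first step is to pin down $\Ho^q_c(X^{\parts})$ for a layer $\parts$ of rank $p$. Re-reading the construction in the proof of Theorem \ref{thm:layers}, the layer attached to $(\wt\beta,z)$ with $\beta=\{B_1,\dots,B_\ell\}$ is $X^{\parts}=X^{\wt B_1}\times\cdots\times X^{\wt B_\ell}\times X^z$, where each $X^{\wt B_i}$ is canonically isomorphic to $X$ and $X^z$ is a single point; since $\rk\parts=n-\ell$, this gives $X^{\parts}\cong X^{n-p}$. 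By the K\"unneth formula for compactly supported cohomology it follows that $\sum_q\dim\Ho^q_c(X^{\parts})\,u^q=P(u)^{n-p}$, a quantity depending on $\parts$ only through its rank.

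Next I would sum over all layers. Plugging the definition of the $E_1$ page from Theorem \ref{thm:ss} into the generating function gives
\[
\sum_{p,q}(\dim E_1^{pq})\,t^pu^q
=\sum_{p\ge 0}t^p\,P(u)^{n-p}\sum_{\rk\parts=p}\dim\H^{p-2}(\zero,\parts).
\]
By Theorem \ref{thm:intervals} every $[\zero,\parts]$ is a geometric lattice, so the order complex of $(\zero,\parts)$ is a wedge of spheres and hence $\dim\H^{p-2}(\zero,\parts)=\dim\H_{p-2}(\zero,\parts)$; together with Corollary \ref{cor:whitney-homology-ranks} and the definition \eqref{eq:whitney} of Whitney homology, the inner sum is precisely $\dim\WH_p(\D(G,S))$. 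So the Hilbert series equals $\sum_p\dim\WH_p(\D(G,S))\,t^p\,P(u)^{n-p}$.

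It remains to homogenize the Whitney Hilbert series. Setting $c_i:=|S|+|G|i$, Theorem \ref{thm:charpoly} (equivalently the identity $P_{\WH}(t)=\prod_{i=0}^{n-1}(1+c_it)$) identifies $\dim\WH_p(\D(G,S))$ with the elementary symmetric polynomial $e_p(c_0,\dots,c_{n-1})$. Since $\prod_{i=0}^{n-1}(v+c_it)=\sum_p e_p(c_0,\dots,c_{n-1})\,t^p v^{n-p}$ for a formal variable $v$, substituting $v=P(u)$ yields $\sum_p\dim\WH_p(\D(G,S))\,t^p P(u)^{n-p}=\prod_{i=0}^{n-1}\bigl(P(u)+(|S|+|G|i)t\bigr)$, which is the claim. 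No step here is deep once Theorems \ref{thm:charpoly}, \ref{thm:intervals} and \ref{thm:ss} are available; the only point needing care is the K\"unneth step of the first paragraph, since in the $\ell$-adic setting $\Ho^*_c$ satisfies K\"unneth on the nose only with field coefficients --- with $\Z_\ell$-coefficients one reads $\dim$ as free rank, which still multiplies across a product --- and everything after that is bookkeeping with elementary symmetric polynomials.
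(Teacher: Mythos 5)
Your proof is correct and follows exactly the route the paper intends (the paper leaves the proposition unproved, merely noting it follows from Theorem \ref{thm:charpoly}): you use $X^{\parts}\cong X^{n-\rk\parts}$ and K\"unneth to collapse the $q$-direction, identify the rank-$p$ coefficient with $\dim\WH_p(\D(G,S))=e_p(c_0,\dots,c_{n-1})$ via the Whitney homology Hilbert series, and homogenize. The remark about cohomology versus homology of the intervals and about K\"unneth with non-field coefficients is a welcome extra bit of care.
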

\begin{proof}
First, assume that $S$ is nonempty. Using in turn the decomposition of Theorem \ref{thm:ss} and the K\"unneth formula relating the compactly supported Poincar\'e polynomials of $X$ and $X^{\parts}\cong X^{n-p}$, we obtain:
\begin{align*}
\sum_{p,q} (\dim E_1^{pq})t^pu^q
&= \sum_{p,q} \sum_{\substack{\parts\in\D(G,S)\\ \rk\parts=p}}  \dim(\wt{\Ho}{}^{p-2}(\zero,\parts))\dim(
\Ho^q_c(X^{\parts})) t^pu^q\\
&= \sum_{p} \sum_{\substack{\parts\in\D(G,S)\\ \rk\parts=p}}  \dim(\wt{\Ho}{}^{p-2}(\zero,\parts))t^p(P_c(X;u))^{n-p}
\end{align*}
Now, since $\dim(\wt{\Ho}{}^{p-2}(\zero,\parts))=(-1)^{\rk\parts}\mu\parts$, we may relate this to the characteristic polynomial, as follows.
\begin{align*}
&= \sum_{p} \sum_{\substack{\parts\in\D(G,S)\\ \rk\parts=p}}  (-1)^p\mu\parts t^p(P_c(X;u))^{n-p}\\
&= (-t)^n \sum_{\parts\in\D(G,S)}  \mu\parts\left(-\frac{P_c(X;u)}{t}\right)^{n-\rk\parts}\\
&= (-t)^n \chi\left(\D(G,S);-\frac{P_c(X;u)}{t}\right)
\end{align*}
Finally, applying Theorem \ref{thm:charpoly}, this is equal to
\[(-t)^n \prod_{i=0}^{n-1} \left(-\frac{P_c(X;u)}{t}-|S|-|G|i\right)
= \prod_{i=0}^{n-1} (P_c(X;u) + (|S|+|G|i)t).\]
When $S=\emptyset$, the poset $\D(G,\emptyset)$ has rank $n-1$, and we modify the last few steps to obtain
\begin{align*}
(-t)^{n-1}P_c(X;u) &\sum_{\parts\in\D(G,\emptyset)}  \mu\parts\left(-\frac{P_c(X;u)}{t}\right)^{n-1-\rk\parts}\\
&= (-t)^{n-1}P_c(X;u) \chi\left(\D(G,\emptyset);-\frac{P_c(X;u)}{t}\right)\\
&= (-t)^{n-1}P_c(X;u) \prod_{i=1}^{n-1} \left(-\frac{P_c(X;u)}{t}-|G|i\right)\\
&= \prod_{i=0}^{n-1} (P_c(X;u) + |G|it).
\end{align*}
\end{proof}

\begin{example}
The complement of the type C toric arrangement $\A(\Z/2,\C^\times)$ has
Poincar\'e polynomial
\[\prod_{i=0}^{n-1} \left(1+t+(2+2i)t\right)
=\prod_{i=1}^{n} \left(1+(1+2i)t\right).\]
This can be seen from using an analogue of Proposition
\ref{prop:poincare-poly} for the Leray spectral sequence, so that $P(u)=1+u$, the
(ordinary) Poincar\'e polynomial. Since this sequence has no nontrivial
differentials, the $E_1$ term gives exactly the cohomology groups.
This method can be used anytime that the Leray spectral sequence degenerates
immediately; for toric arrangements this formula could also be derived by the
work of Moci \cite{Moci2012,Moci2008}.
\end{example}

From Proposition \ref{prop:poincare-poly}, one can now compute the Euler characteristic of the space $\M(G,X)$ by substituting $u=t=-1$. However, a more conceptual and flexible approach to this calculation is presented next.

\subsection{Motive of the complement} \label{sec:motive}
Recall that the compactly supported Euler characteristic is additive with respect to decompositions $X = Z\cup U$ where $Z$ is closed and $U$ is its open complement. Other invariants with this property are called \emph{cut-paste} invariants, or \emph{generalized Euler characteristics}. The Grothendieck ring of varieties $K_0$ provides the universal example of such an invariant: it is  generated by isomorphism classes of varieties, subject to the relation
\[
[X] = [Z]+[X\setminus Z]
\]
whenever $Z$ is closed in $X$, and $[X]\cdot[Y]=[X\times Y]$. Every generalized Euler characteristic with values in some ring $R$ can be identified with a homomorphism $K_0\to R$. The class $[X]$ associated to a variety $X$ is called \emph{the motive of $X$}.  The following equality generalizes a proof by the second author which appeared in Proposition 4.2 of \cite{farb-wolfson}.
Again, we assume that $S$ is nonempty for simplicity; an analogous statement can be made when $S$ is empty.

\begin{theorem}[\textbf{Motive of orbit configuration space}]\label{thm:motive}
Assume that $S\neq\emptyset$.
The motive of the complement $\M(G,X)$ factors as
\begin{equation}
[\M(G,X)] = \prod_{i=0}^{n-1} ([X]-|S|-|G|i).
\end{equation}
\end{theorem}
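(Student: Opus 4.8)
The plan is to deduce the motivic identity from the characteristic polynomial computation of Theorem \ref{thm:charpoly} by a standard stratification-and-M\"obius-inversion argument; since Theorem \ref{thm:charpoly} already carries the combinatorial weight, the remaining work is essentially formal.

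First I would stratify the ambient variety $X^n$ by the poset of layers $\P(G,X)\cong\D(G,S)$ of Theorem \ref{thm:layers}. For each $\parts\in\D(G,S)$ set $U_{\parts}:=X^{\parts}\setminus\bigcup_{\gamma>\parts}X^\gamma$; these are locally closed subvarieties of $X^n$, pairwise disjoint, and for every fixed $\alpha$ one has $X^\alpha=\bigsqcup_{\gamma\geq\alpha}U_\gamma$. Additivity of the class in $K_0$ then gives $[X^\alpha]=\sum_{\gamma\geq\alpha}[U_\gamma]$ for all $\alpha$, and since $\D(G,S)$ is finite, M\"obius inversion over the poset yields $[U_\alpha]=\sum_{\gamma\geq\alpha}\mu(\alpha,\gamma)[X^\gamma]$. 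Specializing to $\alpha=\zero$, whose layer is all of $X^n$ and whose open stratum $U_{\zero}$ is precisely the orbit configuration space $\M(G,X)$ (because every layer is an intersection of atoms, so $\bigcup_{\gamma>\zero}X^\gamma=\bigcup_{H\in\A}H$), we obtain
\[ [\M(G,X)]=\sum_{\parts\in\D(G,S)}\mu(\zero,\parts)\,[X^{\parts}]. \]

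Next I would evaluate the layer classes. The construction in the proof of Theorem \ref{thm:layers} identifies $X^{\parts}$, via the diagonal embeddings on the colored blocks and the constant map given by $z$ on the zero block, with $X^{\ell(\beta)}$, where $\ell(\beta)$ is the number of blocks of $\beta$; hence $[X^{\parts}]=[X]^{\ell(\beta)}=[X]^{\,n-\rk\parts}$. Substituting, the right-hand side above becomes $\sum_{\parts}\mu(\zero,\parts)[X]^{\,n-\rk\parts}=\chi(\D(G,S);[X])$, the characteristic polynomial of $\D(G,S)$ evaluated at $t=[X]$. Theorem \ref{thm:charpoly} then gives $[\M(G,X)]=\prod_{i=0}^{n-1}([X]-|S|-|G|i)$, and the case $S=\emptyset$ follows identically from the second formula there.

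I expect the only point requiring care to be the geometric input to the first step: one must verify that the strata $U_{\parts}$ (and the layers $X^{\parts}$ themselves) are honest locally closed subvarieties, so that additivity in $K_0$ is legitimate. This holds because each defining incidence $g.x_i=x_j$ or $x_i=s$ is algebraic, and because connectedness of $X$ guarantees that the layers are exactly the products described in the proof of Theorem \ref{thm:layers}, with no spurious components; once this is in place, the identity is a formal consequence of the combinatorics established earlier. An alternative route, which I would mention but not pursue, is to read the motive off the $E_1$-page of the spectral sequence of Theorem \ref{thm:ss} together with Proposition \ref{prop:poincare-poly}, using that a cut-paste invariant is computed by any page of a converging spectral sequence; this yields the same product.
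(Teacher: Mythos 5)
Your proposal is correct and is essentially the paper's own argument: the paper phrases the first step as counting how many times each point of $X^n$ contributes to the formal sum $\sum_b \mu(\zero,b)[X^b]$ and invoking the defining recursion of the M\"obius function, which is exactly your stratification-plus-M\"obius-inversion step in different clothing, and both proofs then conclude via $[X^{\parts}]=[X]^{n-\rk\parts}$ and Theorem \ref{thm:charpoly}. No substantive difference to report.
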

The universality of the motive now implies a long list of numerical identities, for example:
\begin{itemize}
\item Let $\chi_c$ be the compactly supported Euler characteristic. Then
\begin{equation}\label{eq:eulerchar}
\chi_c(\M(G,X)) =
\prod_{i=0}^{n-1} (\chi_c(X) -|S| - |G|i).
\end{equation}
In particular, when $X$ is smooth, this computes the classical Euler characteristic via Poincar\'{e} duality.
\item The Hodge-Deligne polynomial $\operatorname{HD}(Y)$ is a generalized Euler characteristic, which on closed complex manifolds records the Hodge numbers
\[
\operatorname{HD}(Y) = \sum_{p,q} \dim \Ho^{p,q}(Y)t^p u^q.
\]
Theorem \ref{thm:motive} then gives a formula for $\operatorname{HD}(\M(G,X))$ in terms of $\operatorname{HD}(X)$. 
\item Over finite fields, the number of $\mathbb{F}_q$-points on a variety is a generalized Euler characteristic. Therefore, when the set $S$ is fixed by the Frobenius action, we get the following point-count
\[
\#\M(G,X)(\mathbb{F}_q) = \prod_{i=0}^{n-1} (\# X(\mathbb{F}_q) - |S|-|G|i)
\]
\end{itemize}

\begin{proof}[Proof of Theorem \ref{thm:motive}]
Our goal is to exhibit the following connection between the motive of $\M(G,X)$ and the characteristic polynomial of the poset $\D(G,S)$ (see \S\ref{sec:charpoly}):
\begin{equation}\label{eq:motive-char}
[\M(G,X)] = \sum_{b\in \D(G,S)}\mu(\zero,b)[X]^{n-\rk(b)} = \chi(\D(G,S),[X])
\end{equation}
Recall that the the M\"{o}bius function $\mu(x):=\mu(\zero,x)$ on a poset $P$ is the unique function for which
\begin{equation}\label{eq:mobius}
\mu(\zero) = 1 \; \text{ and } \, \sum_{y\leq x}\mu(y) = 0
\end{equation}
for all $x>\zero$. We apply this property to the formal weighted sum of layers
\[ [M] := \sum_{b\in \D(G,S)}\mu(b) [X^{b}]. \]
In this sum, every point $p\in X^n$ is counted precisely 
\[
\sum_{b \leq b_p}\mu(b)
\]
times, where $b_p$ is the maximal layer that contains $p$ (see \S \ref{sec:localarr}). But then, by Equation \eqref{eq:mobius}, it follows that the only points that contribute to $[M]$ are the ones in $X^{\zero} \setminus \cup_{b>\zero} X^b = \M(G,X)$, and those are counted precisely once. We therefore get
\[
[\M(G,X)] = [M] = \sum_{b\in \D(G,S)}\mu(b) [X^{b}].
\]
Lastly, in \S\ref{sec:layers} we produced isomorphisms $X^{\parts} \cong X^{n-\rk\parts}$ for every layer. Applying the multiplicative relation in $K_0$, we arrive at Equation \eqref{eq:motive-char}. The factorization into linear factors now follows from that of the characteristic polynomial, see Theorem \ref{thm:charpoly}.
\end{proof}

\begin{remark}[\textbf{Fadell-Neuwirth fibration}]
When $X$ is a smooth manifold, the projection that forgets the last point $\M(G,X) \to \M[n-1](G,X)$ realizes the spaces as iterated fiber bundles, with fiber $X$ minus $S$ and $n-1$ many $G$-orbits. Accordingly, our formula for the motive can be read as the recursive expression
\[
[\M(G,X)] = [\M[n-1](G,x)]\cdot \left([X] - |S| - (n-1)|G|) \right)
\]
which behaves as though the motive is multiplicative in bundles. However, this point of view can not provide an alternative proof of Theorem \ref{thm:motive} even for smooth varieties, since the bundles are not locally trivial in the Zariski topology. In fact, they are typically not even locally trivial in the analytic category.
\end{remark}

\begin{example}
\begin{enumerate}
\item The Euler characteristic for the complement of the reflection arrangement
$\A(\Z/d,\C)$ is $(-d)^{n-1}(n-1)!$.
\item The Euler characteristic for the complement of type C toric and elliptic arrangements,
$\A(\Z/2,\C^\times)$ and $\A(\Z/2,E)$, from Example \ref{ex:typeCP}, are 
$(-2)^nn!$ and $(-2)^n(n+1)!$, respectively.
\item The Euler characteristic for the complement of the elliptic arrangement with hexagonal
Dowling poset from Example \ref{ex:Z62} is $(-6)^nn!$.
\item The Euler characteristic for the complement of the elliptic arrangement with square
Dowling poset from Example \ref{ex:Z42} is $(-4)^nn!$.
\end{enumerate}
\end{example}

\begin{remark}
Observe that in each of the examples above, the Euler characteristic is divisible by the order of the corresponding wreath product group $\G$, as one expects from a free action. While this may not be apparent in Equation \eqref{eq:eulerchar}, an equivalent formulation that makes this fact less surprising is the following:
\[ \chi_c(\M(G,X)) = n!|G|^n\binom{\chi_c(X\setminus S)/|G|}{n}\]
noting that the $G$-action on $X\setminus S$ is free.

\end{remark}


\end{document}